\documentclass{article}
\usepackage{amsmath,amssymb,amsthm}
\usepackage[colorlinks=true]{hyperref}
\usepackage{graphicx}  
\usepackage[english]{babel}   
\usepackage{amsmath,amssymb}  
\usepackage{amsthm}           
\usepackage{float,xcolor}

\newtheorem{theorem}{Theorem}[section]      
\newtheorem{lemma}[theorem]{Lemma}          

\newtheorem{remark}[theorem]{Remark}          

\theoremstyle{definition}                   

\nonstopmode    

\newcommand{\Ltwo}{L^{2}}

\begin{document}

\title{Viscosity in  Error Upper Bound for a Consistent Splitting Scheme of the Navier–Stokes Equations}
\author{M Nader Alhomsi\footnote{Department of Mathematics,  Central Michigan University, 
Mount Pleasant, MI 48858. Email: Alhom1n@cmich.edu}\,, Jiahong Wu\footnote{Department of Mathematics,  University of Notre Dame, 
Notre Dame, IN 46556. Email: jwu29@nd.edu}\,
and Xiaoming Zheng\footnote{Department of Mathematics, Central Michigan University, Mount Pleasant, MI 48858. Email: zheng1x@cmich.edu}
}
\date{}
\maketitle

\begin{abstract}
This paper investigates the role of viscosity in the error upper bounds of a consistent splitting scheme for the Navier--Stokes equations proposed by Huang and Shen \cite{HuangShen2023}. In their original analysis the viscosity is fixed to unity. By following and extending their proof methodology while keeping the viscosity $\nu$ symbolic, we obtain an $H^{1}$ velocity error bound that contains negative powers of $\nu$, indicating that the scheme is not robust as $\nu\to 0$. To establish this bound we refine a theorem in \cite{LiuLiuPego2007} on the constant in the Stokes pressure estimate, which is crucial to the error analysis. A targeted numerical experiment based on a perturbation of the Kovasznay flow corroborates this analytical prediction: the scheme of \cite{HuangShen2023} blows up at high Reynolds number, and a comparison with a fully implicit Newton solver and with the time-dependent Stokes counterpart of the same scheme localizes the failure to the explicit treatment of the convection term.
\end{abstract}

\section{Introduction}
\label{sec_intro}
To solve the Navier-Stokes equations
\begin{align}
    \partial_t \mathbf{u} + \mathbf{u} \cdot \nabla \mathbf{u} &= -\nabla p + \nu \Delta \mathbf{u} + \mathbf{f}, \label{ns1} \\
    \nabla \cdot \mathbf{u} &= 0, \label{ns2}
\end{align}
with velocity ${\bf u}$, pressure $p$, viscosity $\nu$, and external force ${\bf f}$ in a rectangular domain $\Omega \subset \mathbb{R}^d$ ($d=2,3$) with the no-slip boundary condition ${\bf u}|_{\partial\Omega}=0$, 
Huang and Shen introduce the following second-order splitting scheme in \cite{HuangShen2023}
\begin{subequations}\label{HSscheme}
\begin{align}
\frac{(2k+1)\,\mathbf{\bar{u}}^{n+1} - 4k\,\mathbf{\bar{u}}^{n} + (2k-1)\,\mathbf{\bar{u}}^{n-1}}{2\delta t} &=
\nu \Delta \!\left( k\,\mathbf{\bar{u}}^{n+1} - (k-1)\,\mathbf{\bar{u}}^{n} \right)
- \hat{\mathbf{u}}^{\,n} \cdot \nabla \hat{\mathbf{u}}^{\,n}
- \nabla \hat{p}^{\,n} + \mathbf{f}^{\,n+k}, 
\label{HSscheme-a}
\\[6pt]
\bigl(\nabla p^{n+1}, \nabla q \bigr) 
&= \bigl( \mathbf{f}^{\,n+1} 
- \bar{\mathbf{u}}^{\,n+1} \cdot \nabla \bar{\mathbf{u}}^{\,n+1} 
- \nu \nabla \times \nabla \times \bar{\mathbf{u}}^{\,n+1}, \nabla q \bigr),
\label{HSscheme-b}
 \\
\frac{r^{\,n+1} - r^{\,n}}{\delta t} 
&= \frac{r^{\,n+1}}{E\!\bigl(\bar{\mathbf{u}}^{\,n+1}\bigr) + \bar{C}}  \Bigl( - \nu \,\|\nabla \bar{\mathbf{u}}^{\,n+1}\|^2 
+ \bigl( \mathbf{f}^{\,n+1}, \bar{\mathbf{u}}^{\,n+1} \bigr) \Bigr), 
\label{HSscheme-c}
\\
\xi^{\,n+1} &= \frac{r^{\,n+1}} {E(\bar{\mathbf{u}}^{\,n+1}) + \bar{C}}, 
\quad \eta^{\,n+1} = 1 - \bigl(1 - \xi^{\,n+1}\bigr)^2, 
\label{HSscheme-d}
 \\
\mathbf{u}^{n+1} &= \eta^{\,n+1}\,\bar{\mathbf{u}}^{\,n+1}, 
\label{HSscheme-e}
\end{align}
\end{subequations}
where $\delta t$ is the time step size, $n\in\mathbb{N}\cup\{0\}$ is the time step, 
$t^n=n\delta t$, 
$k$ is a positive integer,  ${\bf u}^n$ and $p^n$ are approximations of ${\bf u}(t^n)$ and $p(t^n)$ respectively, 
$\hat{\mathbf{u}}^{\,n} = (k+1)\,\mathbf{u}^{n} - k\,\mathbf{u}^{n-1}$, 
$\hat{p}^{\,n} = (k+1)\,p^{n} - k\,p^{n-1}$. 
The quantity $r^n$ is an approximation of $r(t^n)$, called the generalized scalar auxiliary variable (GSAV) in \cite{HuangShen2023}, whose definition is
\setcounter{equation}{4}
\begin{align}
\label{def_SAV}
r(t) \triangleq E({\bf u}(t))+\bar{C},
\end{align}
where $E({\bf u}(t))=\frac12\|{\bf u}(t)\|^2$ is the kinetic energy and $\bar{C}$ is a constant.  
It is well-known that the solution ${\bf u}$ of \eqref{ns1} and \eqref{ns2} with the zero boundary condition satisfies
\begin{equation}
\frac{dr}{dt} = -\nu \| \nabla {\bf u} \|^2 + ({\bf f}, {\bf u}).
\end{equation}
As shown in Theorem\,6 of \cite{HuangShen2023}( Theorem\,\ref{thm:HS2023_Thm6} in this work), $\bar{C}\ge \max\{2\delta t^2 C^2_f, 2C^2_f ,1\}$ is chosen to guarantee a weak stability, where $C_f$ is a constant only dependent on ${\bf f}$.

As in \cite{HuangShen2023}, we denote the Lebesgue space of squared integrable functions as $L^2(\Omega)$ and the Soboblev spaces $H^n(\Omega)$ and their norms as $\|\cdot \|$ and $\|\cdot\|_n$, respectively. In the above,  $(\cdot, \cdot)$ is the inner product in the $L^2(\Omega)$ space. The vectorized spaces are denoted as  
${\bf L}^2(\Omega)=(L^2(\Omega))^d$, 
${\bf H}^n(\Omega)=(H^n(\Omega))^d$,
and 
${\bf H}^n_0(\Omega)=(H^n_0(\Omega))^d$,
where $d$ is the space dimension and taken as $2$ in this work.

The scheme \eqref{HSscheme-a}-\eqref{HSscheme-e} combines two key ingredients. First, it employs a general second-order backward difference formula (BDF) temporal discretization (implicit on diffusion but explicit on convection), where a suitable choice of 
$k$ enhances the stability of the scheme compared with the classic BDF2 method ($k=1$). For example, Theorem 5 in \cite{HuangShen2023} proves that the schemes \eqref{3.5} and \eqref{3.6} for the time dependent Stokes equations are unconditionally stable when $k\ge 5$ and $\nu=1$.
Second, it introduces the GSAV $r$, representing the kinetic energy. This formulation allows the nonlinear term in the full Navier-Stokes equations to be treated explicitly while still achieving unconditional stability in a weak sense (Theorem 6 in \cite{HuangShen2023}, see a revised version Theorem\,\ref{thm:HS2023_Thm6} in this work). More importantly, \cite{HuangShen2023} is the first rigorous stability and error analysis for
any second-order consistent splitting scheme for the Navier-Stokes equations.
Note that Huang and Shen further improve the scheme in \cite{HuangShen2025} by eliminating the GSAV formulation and reducing the parameter value to $k=3$.

The purpose of this work is to study the robustness of this scheme.  Following \cite{John2021}, a numerical scheme for the Navier--Stokes equations is called \emph{robust} if the upper bound on the $L^{2}$ error of the velocity contains no negative power of the viscosity, and \emph{non-robust} otherwise.
The theoretical results of \cite{HuangShen2023} are stated under the assumption $\nu=1$, so they cannot decide whether the scheme is robust in this sense.  We therefore revisit Theorems~5 and~7 of \cite{HuangShen2023} and re-derive them for arbitrary $\nu>0$, retaining $\nu$ symbolically throughout in order to expose the precise viscosity-dependence of the error bound.  This requires a careful tracking of every constant generated by Young's inequalities, Sobolev embeddings, and the Stokes-pressure splitting.  In particular, we refine a theorem from \cite{LiuLiuPego2007} concerning the constant in the Stokes-pressure estimate; the refined statement and its proof are given in the Appendix.

The remainder of this paper is organized as follows.  Section~\ref{sec_preliminaries} introduces the notation, several classical estimates, and two auxiliary lemmas used in the proofs of the main results.  Section~\ref{sec_stability} presents a refined stability analysis for the time-dependent Stokes equations at arbitrary viscosity.  Section~\ref{sec_error_analysis} extends the error analysis to the Navier--Stokes equations at arbitrary viscosity.  Section~\ref{sec_numerical} presents a numerical experiment that exposes the blow-up behavior of the scheme as the viscosity tends to zero, in addition to a temporal convergence test.  Finally, Section~\ref{sec-discussion} concludes the non-robustness of the scheme in light of these results.

For readability and ease of direct comparison, this work adopts the notation and proof structures established in \cite{HuangShen2023}.  Specifically, the proofs of Theorem~\ref{Theorem 5-New} and Theorem~\ref{Theorem 7-new} follow the same methodology as Theorems~5 and 7 of \cite{HuangShen2023}, respectively, but are written out in a self-contained manner so that the reader does not need to consult \cite{HuangShen2023} alongside the present paper.

\section{Preliminaries}
\label{sec_preliminaries}
As in \cite{HuangShen2023},  the following estimates are often used in this work:
$\forall {\bf u}, {\bf v}, {\bf w}\in {\bf H}^1_0(\Omega)\cap {\bf H}^2(\Omega)$, 
\begin{align}
|({\bf u}\cdot\nabla {\bf v}, {\bf w})|
& \le C\,
\|{\bf u}\|^{1/2}\,\|\nabla {\bf u}\|^{1/2} \,
\|\nabla {\bf v}\|^{1/2}\,\| {\bf v}\|_2^{1/2} \|{\bf w}\|,
\quad d=2, 
\label{convection_estimate}
\end{align}
\begin{align}
|({\bf u}\cdot\nabla {\bf v}, {\bf w})|
&\le
\left\{
\begin{aligned}
&C \|\mathbf u\|_{1}\|\mathbf v\|_{1}\|\mathbf w\|_{1},\\
&C \|\mathbf u\|_{2}\|\mathbf v\|_{0}\|\mathbf w\|_{1},\\
&C \|\mathbf u\|_{2}\|\mathbf v\|_{1}\|\mathbf w\|_{0},\\
&C \|\mathbf u\|_{1}\|\mathbf v\|_{2}\|\mathbf w\|_{0},\\
&C \|\mathbf u\|_{0}\|\mathbf v\|_{2}\|\mathbf w\|_{1},
\end{aligned}
\right.
\qquad d\le 4,
\label{Teman_estimate}
\end{align}
\begin{align}
\|{\bf u}\|_2
&\le C \|\Delta {\bf u} \|
\quad \text{(elliptic regularity)},
\label{elliptic_regularity}
\end{align}
\begin{align}
\|{\bf u} \|
&\le C \| \nabla {\bf u} \|
\quad \text{(Poincar\'{e}/Sobolev inequality)},
\label{Sobole-inequality}
\end{align}
\begin{align}
\|\nabla p_s({\bf u})\|^2 
&\le \left(\frac{1}{2}+\varepsilon\right) \|\Delta {\bf u}\|^2
+ C_S(\varepsilon) \| \nabla {\bf u} \|^2,
\quad \text{ where } C_S(\varepsilon)=\frac{C}{\varepsilon^3} \text{ when } \varepsilon\to 0+.
\label{lemma4-Liu}
\end{align}
The inequalities \eqref{convection_estimate} and  \eqref{Teman_estimate} are 
directly taken from (2.2) and (2.4) of \cite{HuangShen2023}, respectively.
The upper bound in  \eqref{lemma4-Liu} is proved in Theorem\,\ref{refinedTheorem1LiuLiuPego} in Appendix, a refinement of Theorem 1 in \cite{LiuLiuPego2007}. The symbol    
$p_s({\bf u})$ is the  Stokes pressure for any ${\bf u} \in {\bf H}^{2}(\Omega)$ defined by
$\nabla p_s({\bf u})
= \bigl( \Delta \mathcal{P} - \mathcal{P}\Delta \bigr){\bf u}$, 
where $\mathcal{P}$ is the Leray--Helmholtz projection onto divergence-free vector fields with zero normal component on
$\partial\Omega$. In the Helmholtz decomposition
${\bf u} = \mathcal{P}{\bf u} + \nabla \phi$, the quantity $\phi \in H^{1}(\Omega)$ satisfies 
$\bigl( {\bf u} - \nabla \phi, \nabla q \bigr) = 0$, 
$\forall q \in H^{1}(\Omega)$. 
The form $C_S(\varepsilon)=C/\varepsilon^3$ holds only when $\varepsilon\to 0+$. 
This work, however, focuses primarily on the case of small $\varepsilon$. 
\begin{remark}
All the constants $C$ from \eqref{convection_estimate} to \eqref{lemma4-Liu} 
are only dependent on $\Omega$, thus taken as a single value (their maximum). Throughout this work, the symbol $C$ denotes this fixed constant, rather than a generic constant that may vary from step to step.
\end{remark}

Denote the exact solution of \eqref{ns1} and \eqref{ns2} at time $t^i=i \delta t$ as $({\bf u}(t^i), p(t^i))$, the numerical solution of \eqref{HSscheme-a} to \eqref{HSscheme-e} as $({\bf u}^i, p^i)$, 
and the errors at $t^i$  as 
\begin{align}
{\bf e}^i = {\bf u}^i - {\bf u}(t^i),
\quad
{\bf \bar e}^i = {\bf \bar u}^i - {\bf u}(t^i),
\quad
{e}^i_p = p^i - p(t^i),
\quad
\text{where} \quad {\bf u}^i = \eta^i \bar{\bf u}^i.
\end{align}
The following symbols are used in the proofs below to simplify the notations, $\forall i\in\mathbb{N}$,
\begin{equation}
\begin{array}{lll}
\tilde{\bf u}^i \triangleq (k+1) \bar{\bf u}^i - k \bar{\bf u}^{i-1}, 
&
\hat{\bf u}^i \triangleq (k+1) {\bf u}^i - k {\bf u}^{i-1},
&
\hat{\bf u}(t^i) \triangleq (k+1) {\bf u}(t^i) - k {\bf u}(t^{i-1}),
\\
\tilde{\bf e}^i \triangleq (k+1) \bar{\bf e}^i - k \bar{\bf e}^{i-1}, 
&
\hat{\bf e}^i \triangleq (k+1) {\bf e}^i - k {\bf e}^{i-1}, 
&
\hat{e}^i_p \triangleq (k+1)e^i_p - k e^{i-1}_p,
\\
\hat{p}^i \triangleq (k+1) p^i - k p^{i-1}.
& &
\end{array}
\label{notations}
\end{equation}
A simple way to remember these symbols is that the tilde quantities are associated with barred variables, whereas the hat quantities correspond to unbarred variables.

The following two lemmas  are used in this work. 
\begin{lemma}
\label{lemma:algebraic_identity}
For any $x,y,z\in\mathbb{R}$ and $k\ge\tfrac12$, the following identity holds:
\begin{equation}\label{eq:alg-identity}
\begin{aligned}
&[ (2k+1)x - 4ky + (2k-1)z ] \cdot 
 [ (k+1)x - k y ]\\
= &  A\,(|x|^2 - |y|^2)
  + \lvert Bx - D y\rvert^2 
  - \lvert B y - D z\rvert^2
  + E\,\lvert x - y\rvert^2
  - F\,\lvert y - z\rvert^2
  + G\,\lvert x - 2y + z\rvert^2,
\end{aligned}
\end{equation}
where
\[
A = \tfrac1{2k},\quad
B = \tfrac{(k+1)\sqrt{2k-1}}{\sqrt{2k}},\quad
D = \sqrt{\tfrac{k(2k-1)}2},\quad
E = \tfrac{2k+3}2,\quad
F = \tfrac{2k-1}2,\quad
G = \tfrac{(k+1)(2k-1)}2.
\]
In particular, for $k\ge\tfrac12$ one has $E>F\ge 0$.
\end{lemma}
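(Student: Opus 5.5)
The plan is to verify \eqref{eq:alg-identity} by matching coefficients. Both sides are homogeneous quadratic forms in $(x,y,z)$, so it suffices to check that the six coefficients of $x^2, y^2, z^2, xy, xz, yz$ agree. The hypothesis $k\ge\tfrac12$ is used only so that $B$ and $D$ are real: it gives $2k-1\ge0$, so the square roots are defined. Wherever the constants enter, they appear through
\[
B^2=\tfrac{(k+1)^2(2k-1)}{2k},\qquad D^2=\tfrac{k(2k-1)}{2},\qquad BD=\tfrac{(k+1)(2k-1)}{2}.
\]
For $BD$, note that $\sqrt{2k-1}\cdot\sqrt{2k-1}=2k-1$ and $\sqrt{k}/\sqrt{2k}\cdot 1/\sqrt2=\tfrac12$. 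All the checks are therefore rational identities in $k$.

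\textbf{Step 1: expand the left-hand side.} Expanding the product gives
\[
(2k+1)(k+1)x^2 + 4k^2 y^2 - \bigl[(2k+1)k+4k(k+1)\bigr]xy + (2k-1)(k+1)xz - k(2k-1)yz,
\]
with no $z^2$ term.

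\textbf{Step 2: expand the right-hand side.} Each square on the right expands as follows:
\[
|Bx-Dy|^2=B^2x^2-2BDxy+D^2y^2,\qquad |By-Dz|^2=B^2y^2-2BDyz+D^2z^2,
\]
\[
|x-y|^2,\qquad |y-z|^2,\qquad |x-2y+z|^2=x^2+4y^2+z^2-4xy+2xz-4yz.
\]
Collecting terms gives one coefficient per monomial.

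\textbf{Step 3: compare coefficients.} As a sample, the $x^2$ coefficient on the right is $A+B^2+E+G$. Combining $A+B^2=\frac{1+(k+1)^2(2k-1)}{2k}=k^2+\tfrac32k$ with $E+G=k^2+\tfrac32k+1$ gives $(2k+1)(k+1)$, as required. Similarly, the $z^2$ coefficient is $-D^2-F+G=\tfrac{2k-1}{2}\bigl(-k-1+(k+1)\bigr)=0$.

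The remaining four checks ($y^2$, $xy$, $xz$, $yz$) are of the same nature. For instance, the $xz$ coefficient is simply $2G=(k+1)(2k-1)$, matching the left-hand side directly. The $y^2$ coefficient, $-A+D^2-B^2+E-F+4G$, is the one I expect to be the main (purely bookkeeping) obstacle, since it involves the most terms with denominators $2k$. I would handle it by combining $-A-B^2=-(k^2+\tfrac32k)$ first, using the same simplification as in the $x^2$ case, and then adding the polynomial terms to reach $4k^2$.

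\textbf{Step 4: the final claim.} Finally, $E-F=\tfrac{2k+3-(2k-1)}{2}=2>0$, and $F=\tfrac{2k-1}{2}\ge0$ for $k\ge\tfrac12$, which gives $E>F\ge0$.
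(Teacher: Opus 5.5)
Your coefficient-matching argument is correct, and it is the direct verification the paper relies on: the paper states this lemma without proof and treats it as a routine expansion. The two coefficients you did not write out also agree with the left-hand side. The $xy$ coefficient is $-2BD-2E-4G=-3(k+1)(2k-1)-(2k+3)=-6k^2-5k$, and the $yz$ coefficient is $2BD+2F-4G=(2k-1)\bigl[(k+1)+1-2(k+1)\bigr]=-k(2k-1)$.
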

\begin{lemma}
\label{lemma-k-values}
Let  $\theta>0$ and suppose the positive real number $k$ satisfies 
\begin{align}
\frac{k-1}{k}\ge \frac{\varepsilon_2}{2}
+\frac{1}{4\varepsilon_2} + \epsilon \,\theta
\label{k-condition}
\end{align}
for some positive values of $\varepsilon_2$ and $\varepsilon$. 
The infimum of $k$ over $\varepsilon_2$ and $\varepsilon$ is $k_{\inf}=\frac{1}{1 - \frac{1}{\sqrt{2}}}\approx 3.4$ when $\varepsilon_2=\frac{1}{\sqrt{2}}$ and $\varepsilon\to 0+$. 
Thus, the minimum integer of $k$ is $4$ and it can be attained when $\varepsilon_2=\frac{1}{\sqrt{2}}$ and $0<\varepsilon \le \frac{1}{\theta} \left( \frac{3}{4} - \frac{1}{\sqrt{2}} \right)$. 
\end{lemma}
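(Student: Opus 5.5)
The statement is Lemma~\ref{lemma-k-values}. The plan is to reduce the condition \eqref{k-condition} to a one-variable optimization. Write $g(\varepsilon_2)=\frac{\varepsilon_2}{2}+\frac{1}{4\varepsilon_2}$ for $\varepsilon_2>0$. Since $\frac{k-1}{k}=1-\frac1k$ is strictly increasing in $k>0$ and bounded above by $1$, the condition is equivalent to
\[
\frac1k\le 1-g(\varepsilon_2)-\varepsilon\theta .
\]
This requires the right side to be positive, in which case it reads
\[
k\ge \frac{1}{1-g(\varepsilon_2)-\varepsilon\theta}.
\]
For fixed admissible $(\varepsilon_2,\varepsilon)$ the least admissible $k$ is therefore this quotient. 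Minimizing $k$ thus amounts to minimizing the quantity $g(\varepsilon_2)+\varepsilon\theta$ over $\varepsilon_2>0$ and $\varepsilon>0$.

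\textbf{Step 1: minimize $g$.} By AM--GM,
\[
g(\varepsilon_2)\ge 2\sqrt{\tfrac{\varepsilon_2}{2}\cdot\tfrac{1}{4\varepsilon_2}}=\tfrac{1}{\sqrt2},
\]
with equality iff $\frac{\varepsilon_2}{2}=\frac{1}{4\varepsilon_2}$, that is, $\varepsilon_2=\frac1{\sqrt2}$. Equivalently, $g'(\varepsilon_2)=\frac12-\frac1{4\varepsilon_2^2}=0$ gives the same point.

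\textbf{Step 2: the $\varepsilon$ term.} The term $\varepsilon\theta$ is positive and decreases to $0$ as $\varepsilon\to0+$. Hence the infimum of $g(\varepsilon_2)+\varepsilon\theta$ is $\frac1{\sqrt2}<1$, and it is not attained because $\varepsilon>0$. By monotonicity of $x\mapsto 1/(1-x)$ on $[0,1)$, the infimum of the admissible $k$ is
\[
k_{\inf}=\frac{1}{1-1/\sqrt2}=2+\sqrt2\approx3.414 .
\]
To confirm this is an infimum, I will exhibit $\varepsilon_2=\frac1{\sqrt2}$ and $\varepsilon\to0+$ with the corresponding minimal $k$ tending to $k_{\inf}$. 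Conversely, every admissible $k$ satisfies $k\ge k_{\inf}$, and in fact strictly, since $\varepsilon>0$.

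\textbf{Step 3: the integer claim.} Since $3<2+\sqrt2<4$, no integer $k\le3$ is admissible. For $k=3$ one can check directly that $\frac{2}{3}<\frac{1}{\sqrt2}\le g(\varepsilon_2)+\varepsilon\theta$. For $k=4$ with $\varepsilon_2=\frac1{\sqrt2}$, the condition becomes $\frac34\ge\frac1{\sqrt2}+\varepsilon\theta$, i.e. $\varepsilon\le\frac1\theta\left(\frac34-\frac1{\sqrt2}\right)$. This bound is positive because $\frac1{\sqrt2}\approx0.707<0.75$, which gives the stated range of $\varepsilon$.

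There is no genuine obstacle here. The only points needing care are the monotonicity step converting the inequality on $\frac{k-1}{k}$ into a lower bound on $k$, together with the positivity requirement $g+\varepsilon\theta<1$, and the remark that the infimum is approached but not attained.
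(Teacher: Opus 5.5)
Your proof is correct and follows essentially the same route as the paper. Both rewrite the condition as $1-\frac1k\ge g$, use the monotonicity of $1-\frac1k$ to reduce to minimizing $g$ (infimum approached at $\varepsilon_2=\frac{1}{\sqrt2}$, $\varepsilon\to0+$), and then set $k=4$, $\varepsilon_2=\frac{1}{\sqrt2}$ to read off the range of $\varepsilon$; your version is somewhat more careful, since it justifies the minimization by AM--GM, records the positivity requirement $g+\varepsilon\theta<1$, and explicitly rules out $k=3$.
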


\begin{proof}
\,\newline
Denote $g(\varepsilon_2, \varepsilon)\triangleq \frac{\varepsilon_2}{2}+\frac{1}{4\varepsilon_2} + \epsilon \theta$.  
Then \eqref{k-condition} becomes 
$1-\frac{1}{k} \ge g$.
Because $1-\frac{1}{k}$ is increasing on $k$, the infimum of $k$ occurs when $g$ takes the infimum over positive values of $\epsilon_2$ and $\epsilon$. This happens when $\varepsilon_2=\frac{1}{\sqrt{2}}$ and $\varepsilon\to 0+$. In this case, $k_{\inf}=\frac{1}{1 - \frac{1}{\sqrt{2}}}$. 
To obtain $k=4$, we let $k=4$ and $\varepsilon_2=\frac{1}{\sqrt{2}}$ in $1-\frac{1}{k}\ge g$,  which yields $0<\varepsilon \le \frac{1}{\theta}\left( \frac{3}{4} - \frac{1}{\sqrt{2}} \right)$.

\end{proof}

\section{Refined Stability for the time dependent Stokes Problem}
\label{sec_stability}
When the scheme \eqref{HSscheme-a} and \eqref{HSscheme-b} are applied to the time dependent Stokes equations (the equations \eqref{ns1} and \eqref{ns2} without the convection term ${\bf u}\cdot \nabla {\bf u}$), we get the following scheme 
\begin{align}
\frac{(2k+1)\mathbf{u}^{\,n+1} 
- 4k\mathbf{u}^n+(2k-1)\mathbf{u}^{\,n-1} }{2\delta t}
&= \nu\Delta\bigl(k\mathbf{u}^{\,n+1}-(k-1)\mathbf{u}^n\bigr) -\nabla\bigl((k+1)p^n-kp^{n-1}\bigr),
\label{3.5}
\\
\bigl(\nabla p^{\,n+1},\nabla q\bigr)
&= -\nu\bigl(\nabla\times\nabla\times\mathbf{u}^{\,n+1},\nabla q\bigr),
\qquad\forall q\in H^1(\Omega).
\label{3.6}
\end{align}

\begin{theorem}[Updated Theorem 5 of \cite{HuangShen2023}; Stability for general $\nu$ and $k\ge 4$ for time dependent Stokes equations; spatial dimension=2 or 3]
\label{Theorem 5-New}
Assume $\mathbf{u}^1$ is computed using a first order consistent splitting scheme \cite{guermond2003new}. 
Let $\nu_{\max}>0$. 
Then for any integer $k\ge4$ and viscosity $0<\nu<\nu_{\max}$, the scheme $(3.5)-(3.6)$ is unconditionally stable. In particular, for all $n$ with $n\delta t\le T$, there holds
\begin{equation}
\begin{aligned}
  \|\nabla\mathbf{u}^{n+1}\|^2
  \;+\;\nu\,\delta t\sum_{i=0}^n\|\Delta\mathbf{u}^{i+1}\|^2
  \;+\;\frac{\delta t}{\nu}
    \sum_{i=0}^n
    \|\nabla p^{i+1} \|^2
  \;\le\; C_{3.7},
\end{aligned}
\label{3.7-new}  
\end{equation}
where $C_{3.7}$ depends on the initial condition, the final time $T$, and the value of $k$, and is monotonically increasing with respect to the maximum viscosity $\nu_{\max}$, but is independent of $\delta t$ and $n$. The explicit form of $C_{3.7}$ is given in \eqref{def_C3.7}.

\end{theorem}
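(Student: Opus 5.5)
We follow the energy method of Theorem~5 in \cite{HuangShen2023}, testing \eqref{3.5} with $-\Delta\tilde{\bf u}^{n+1}$, where $\tilde{\bf u}^{n+1}=(k+1){\bf u}^{n+1}-k{\bf u}^n$ (here ${\bf u}=\bar{\bf u}$ since there is no GSAV step), while keeping $\nu$ symbolic.

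\textbf{Step 1 (time derivative).} Since ${\bf u}^i\in{\bf H}^1_0$, integration by parts turns the time-derivative term into the gradient form $\frac{1}{2\delta t}\big((2k+1)\nabla{\bf u}^{n+1}-4k\nabla{\bf u}^n+(2k-1)\nabla{\bf u}^{n-1},\nabla\tilde{\bf u}^{n+1}\big)$. Lemma~\ref{lemma:algebraic_identity}, applied pointwise with $x,y,z$ the gradients, gives a telescoping sum $A(\|\nabla{\bf u}^{n+1}\|^2-\|\nabla{\bf u}^n\|^2)+\dots$ plus the nonnegative terms $E-F$ and $G$.

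\textbf{Step 2 (viscous term).} Write $k{\bf u}^{n+1}-(k-1){\bf u}^n$ and $\tilde{\bf u}^{n+1}$ in terms of $\Delta{\bf u}^{n+1}$ and $\Delta{\bf u}^n$. After Young's inequality with parameter $\varepsilon_2$, the viscous term is bounded below by
\[
\nu\Big(k(k+1)-\dots\Big)\|\Delta{\bf u}^{n+1}\|^2
\]
minus $\nu$-weighted $\|\Delta{\bf u}^n\|^2$ terms, which we keep in telescoping form.

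\textbf{Step 3 (pressure term).} This is the key step. Using \eqref{3.6} and ${\bf u}^{n+1}|_{\partial\Omega}=0$, we have $\nabla p^{n+1}=\nu\nabla p_s({\bf u}^{n+1})$, i.e. the Stokes pressure scaled by $\nu$; as in \cite{LiuLiuPego2007}, $-\nabla\times\nabla\times{\bf u}=\Delta{\bf u}-\nabla\nabla\cdot{\bf u}$ and the projection of $\Delta{\bf u}-\nabla\nabla\cdot{\bf u}$ onto gradients is $\nabla p_s$. The pressure term $(\nabla\hat p^n,\Delta\tilde{\bf u}^{n+1})$ is then bounded by Cauchy--Schwarz and Young with weight $\varepsilon_2$, followed by \eqref{lemma4-Liu}:
\[
\nu^2\|\nabla p_s({\bf u}^i)\|^2\le \nu^2\Big(\tfrac12+\varepsilon\Big)\|\Delta{\bf u}^i\|^2+\nu^2C_S(\varepsilon)\|\nabla{\bf u}^i\|^2 .
\]
Dividing the whole identity by $\nu k^2$ (or an equivalent normalization), the $\|\Delta\|^2$ terms cancel favorably exactly when the coefficient condition \eqref{k-condition} of Lemma~\ref{lemma-k-values} holds, with $\theta$ an absolute constant coming from the $\varepsilon$-part. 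Lemma~\ref{lemma-k-values} then gives $k\ge4$ with $\varepsilon_2=1/\sqrt2$ and a fixed small $\varepsilon$, so a positive multiple of $\nu\|\Delta{\bf u}^{n+1}\|^2$ survives on the left. The remainder $\nu C_S(\varepsilon)\|\nabla{\bf u}^i\|^2\le\nu_{\max}C_S\|\nabla{\bf u}^i\|^2$ goes to the right-hand side. This is where the monotone dependence on $\nu_{\max}$ enters.

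\textbf{Step 4 (summation and Gronwall).} Multiply by $\delta t$, sum from $i=1$ to $n$, and handle the first step using ${\bf u}^1$ from the first-order scheme \cite{guermond2003new}. The discrete Gronwall lemma then bounds $\|\nabla{\bf u}^{n+1}\|^2+\nu\delta t\sum\|\Delta{\bf u}^{i+1}\|^2$ by a constant of the form $e^{C\nu_{\max}C_S T}\times(\text{initial data})$. For the pressure term, apply \eqref{lemma4-Liu} again:
\[
\frac{\delta t}{\nu}\sum\|\nabla p^{i+1}\|^2=\nu\,\delta t\sum\|\nabla p_s\|^2\le \nu\,\delta t\sum\big(\|\Delta{\bf u}^{i+1}\|^2+C_S\|\nabla{\bf u}^{i+1}\|^2\big),
\]
which is controlled by the previous bound with an extra factor $\nu_{\max}T$. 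Collecting the constants defines $C_{3.7}$.

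The main obstacle is the bookkeeping in Steps 2--3: the telescoped coefficients of $\|\Delta{\bf u}^n\|^2$ must be matched across consecutive steps so that the net coefficient reduces exactly to $\frac{k-1}{k}-g(\varepsilon_2,\varepsilon)\ge0$. If the direct split fails, we would first try absorbing the $n$-level terms into a modified energy $\|\nabla{\bf u}^{n+1}\|^2+c\,\nu\delta t\|\Delta{\bf u}^{n+1}\|^2$.
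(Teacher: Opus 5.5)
There is a genuine gap, and it is the step you flag as the ``main obstacle''. You do Steps 2--3 at the level of individual time levels: Young on the viscous term in $\Delta\mathbf{u}^{n+1},\Delta\mathbf{u}^n$, and \eqref{lemma4-Liu} applied to $\mathbf{u}^i$. You then assert that the $\|\Delta\cdot\|^2$ terms cancel ``exactly when'' \eqref{k-condition} holds. They do not, for any $k$.

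Every inequality you use holds for arbitrary $\mathbf{u}^n\in\mathbf{H}^2\cap\mathbf{H}^1_0$, so test the bookkeeping on a constant sequence $\mathbf{u}^n\equiv\mathbf{w}$:
\begin{itemize}
\item The time-difference term vanishes.
\item The viscous term equals exactly $\nu\|\Delta\mathbf{w}\|^2$ per step, because $k-(k-1)=(k+1)-k=1$.
\item Any bound of $\|(k+1)\nabla p_s(\mathbf{u}^n)-k\nabla p_s(\mathbf{u}^{n-1})\|^2$ by $\alpha\|\nabla p_s(\mathbf{u}^n)\|^2+\beta\|\nabla p_s(\mathbf{u}^{n-1})\|^2$ needs $\alpha+\beta\ge(2k+1)^2$ (test with opposite arguments).
\end{itemize}
So after \eqref{lemma4-Liu} the pressure term is charged at least $\frac{2k+1}{\sqrt{2}}\,\nu\|\Delta\mathbf{w}\|^2$ per step, whatever Young weight you pick. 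Since $\frac{2k+1}{\sqrt{2}}>1$, the net per-step coefficient is negative. Because this deficit persists in a steady state, no telescoping rearrangement can absorb it, and in particular neither can the modified energy of your fallback.

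The missing idea is never to split $\hat{\mathbf{u}}$. You already have $\nabla p^{j}=\nu\nabla p_s(\mathbf{u}^{j})$. By linearity, $\nabla\hat p^n=\nu\nabla p_s(\hat{\mathbf{u}}^n)$ with $\hat{\mathbf{u}}^n=(k+1)\mathbf{u}^n-k\mathbf{u}^{n-1}\in\mathbf{H}^2\cap\mathbf{H}^1_0$. Applying \eqref{lemma4-Liu} to $\hat{\mathbf{u}}^n$ itself, with $\varepsilon$ replaced by $2\varepsilon$, gives
\[
\frac{1}{2\nu\varepsilon_2}\|\nabla\hat p^n\|^2\le\frac{\nu}{\varepsilon_2}\Big(\frac14+\varepsilon\Big)\|\Delta\hat{\mathbf{u}}^n\|^2+\frac{\nu}{2\varepsilon_2}C_S(2\varepsilon)\|\nabla\hat{\mathbf{u}}^n\|^2 .
\]
On the viscous side, replace Young by the exact identity
\[
(ka-(k-1)b)\cdot((k+1)a-kb)=\frac{k-1}{k}|(k+1)a-kb|^2+\frac1k|a|^2+\frac12\big(|a|^2-|b|^2+|a-b|^2\big),
\]
with $a=\Delta\mathbf{u}^{n+1}$ and $b=\Delta\mathbf{u}^n$. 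This produces $\nu\frac{k-1}{k}\|\Delta\hat{\mathbf{u}}^{n+1}\|^2+\frac{\nu}{k}\|\Delta\mathbf{u}^{n+1}\|^2$ plus telescoping and nonnegative terms.

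Now multiply by $2\delta t$, sum, and shift the index of the pressure contribution. The $\|\Delta\hat{\mathbf{u}}\|^2$ terms then carry the coefficient $2\nu\delta t\big[\frac{k-1}{k}-\frac{\varepsilon_2}{2}-\frac{1}{\varepsilon_2}(\frac14+\varepsilon)\big]$. This is \eqref{k-condition} with $\theta=1/\varepsilon_2$, and Lemma~\ref{lemma-k-values} then admits every $k\ge4$ with $\varepsilon_2=1/\sqrt{2}$ and $\varepsilon\approx 0.03$.

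Two consequences are worth noting. The dissipation $\nu\delta t\sum\|\Delta\mathbf{u}^{n+1}\|^2$ comes from the separate $\frac{2\nu\delta t}{k}\|\Delta\mathbf{u}^{n+1}\|^2$ term, not from a surplus in the $\hat{\mathbf{u}}$ balance. The constant-sequence obstruction disappears because there $\|\Delta\hat{\mathbf{u}}\|=\|\Delta\mathbf{w}\|$, not $(2k+1)$ times it.

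With this repair, your Steps 1 and 4 match the paper's proof: Lemma~\ref{lemma:algebraic_identity}, Gronwall with $\nu\le\nu_{\max}$, and the final pressure bound via \eqref{lemma4-Liu}.
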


\begin{remark}
\label{remark1}
The original stability result (Theorem 5 in \cite{HuangShen2023}) requires $k\ge 5$ and is presented only for $\nu=1$.
Reducing $k$ from 5 to 4 improves the truncation errors, as illustrated in (3.1) and (3.2) of \cite{HuangShen2023}: 
\begin{align}
\frac{(2k+1)\,\phi(t^{n+1}) - 4k\,\phi(t^{n}) + (2k-1)\,\phi(t^{n-1})}{2\,\delta t}
&= \phi'(t^{n+k})
+ \frac{1 - 3k^{2}}{6}\,\phi'''(t^{n+k})\,\delta t^{2}
+ O(\delta t^{3}),
\\
k\,\phi(t^{n+1}) - (k-1)\,\phi(t^n)
&= \phi(t^{n+k}) - \frac{k(k-1)}{2}\,\phi''(t^{n+k})\,\delta t^2 + O(\delta t^3).
\end{align}
The updated proof follows the methodology in \cite{HuangShen2023}, but incorporates an additional tuning parameter, $\varepsilon_2$, within the pressure term of \eqref{3.13-new}. Furthermore, it utilizes Lemma~\ref{lemma:algebraic_identity} to accommodate general values of $k$. 
\end{remark}

\begin{proof}
\,\newline
The proof follows the methodology of Theorem~5 in \cite{HuangShen2023}. Recall 
$\hat{\bf u}^j\triangleq(k+1){\bf u}^j-k{\bf u}^{j-1}$ and
$\hat p^j\triangleq(k+1)p^j-kp^{j-1}$ from \eqref{notations}.

Taking the $L^2$ inner product of \eqref{3.5} with
$-\Delta\hat{\bf u}^{n+1}$ and multiplying both sides by $2\delta t$ yields
\begin{align}
&\bigl((2k+1){\bf u}^{n+1}-4k{\bf u}^{n}+(2k-1){\bf u}^{n-1},\,-\Delta\hat{\bf u}^{n+1}\bigr) \notag\\
&\quad+2\delta t\,\bigl(-\nu\Delta(k{\bf u}^{n+1}-(k-1){\bf u}^n),\,-\Delta\hat{\bf u}^{n+1}\bigr)
=2\delta t\,\bigl(-\nabla\hat p^n,\,-\Delta\hat{\bf u}^{n+1}\bigr).
\label{3.7a-new}
\end{align}
The three inner products are analyzed in turn.

As for the viscous term, using
$(-\nu\Delta{\bf v},-\Delta{\bf w})=\nu(\Delta{\bf v},\Delta{\bf w})$
together with the elementary identity
\begin{equation}
(ka-(k-1)b)\bigl((k+1)a-kb\bigr)
=\frac{k-1}{k}\bigl((k+1)a-kb\bigr)^2+\frac{1}{k}a^2+\frac{1}{2}\bigl(a^2-b^2+(a-b)^2\bigr),
\label{3.7b-new}
\end{equation}
applied componentwise with $a=\Delta{\bf u}^{n+1}$ and $b=\Delta{\bf u}^n$
(noting $ka-(k-1)b=\Delta(k{\bf u}^{n+1}-(k-1){\bf u}^n)$ and
$(k+1)a-kb=\Delta\hat{\bf u}^{n+1}$), one obtains
\begin{align}
&\bigl(-\nu\Delta(k{\bf u}^{n+1}-(k-1){\bf u}^n),\,-\Delta\hat{\bf u}^{n+1}\bigr) \notag\\
=\;& \nu\frac{k-1}{k}\|\Delta\hat{\bf u}^{n+1}\|^2
+\frac{\nu}{k}\|\Delta{\bf u}^{n+1}\|^2
+\frac{\nu}{2}\Bigl(\|\Delta{\bf u}^{n+1}\|^2-\|\Delta{\bf u}^n\|^2+\|\Delta{\bf u}^{n+1}-\Delta{\bf u}^n\|^2\Bigr).
\label{3.8}
\end{align}

For the pressure term, Cauchy--Schwarz and Young's inequality with parameter
$\varepsilon_2>0$ give
\begin{align}
\bigl|(-\nabla \hat p^n, -\Delta \hat{\bf u}^{n+1})\bigr|
\le \|\nabla\hat p^n\|\cdot\|\Delta\hat{\bf u}^{n+1}\|
\le \frac{1}{2\nu\varepsilon_2}\|\nabla\hat p^n\|^2
+\frac{\nu\varepsilon_2}{2}\|\Delta\hat{\bf u}^{n+1}\|^2.
\label{3.9-new}
\end{align}
We next bound $\|\nabla\hat p^n\|^2$ by combining the scheme \eqref{3.6} with
the Stokes pressure identity (Theorem~1 of
\cite{LiuLiuPego2007}):
\begin{align}
(\nabla p_s({\bf u}), \nabla q) = - (\nabla\times\nabla\times{\bf u}, \nabla q),
\qquad\forall q\in H^1(\Omega),
\label{3.10}
\end{align}
Note \eqref{3.6} yields
\begin{equation}
(\nabla \hat{p}^{n},\nabla q)=
-\nu (\nabla\times\nabla\times \hat{\bf u}^{n},\nabla q), \qquad \forall q\in H^1(\Omega).
\label{3.11}
\end{equation}
Thus, 
$(\nabla \hat{p}^{n},\nabla q)=\nu(\nabla p_s(\hat{\bf u}^n),\nabla q)$ for every
$q\in H^1(\Omega)$. Taking $q=\hat{p}^{n}$ leads to
\begin{equation}
\|\nabla\hat p^n\|\le\nu\,\|\nabla p_s(\hat{\bf u}^n)\|.
\label{3.12-new}
\end{equation}
The following estimate can be obtained by applying the refined Stokes pressure estimate \eqref{lemma4-Liu} on \eqref{3.12-new} with the parameter
$\varepsilon$ replaced by $2\varepsilon$:
\begin{align}
\frac{1}{2\nu\varepsilon_2}\|\nabla\hat p^n\|^2
\le\frac{\nu}{\varepsilon_2}\Bigl(\frac14+\varepsilon\Bigr)\|\Delta\hat{\bf u}^n\|^2
+\frac{C\nu}{16\varepsilon^3\varepsilon_2}\|\nabla\hat{\bf u}^n\|^2,
\label{3.13-new}
\end{align}
where $\varepsilon>0$ is arbitrary.

For the temporal-difference term, integration by parts (zero Dirichlet
boundary conditions) gives$({\bf v},-\Delta{\bf w})$
$= (\nabla{\bf v},\nabla{\bf w})$.  ;
applying Lemma~\ref{lemma:algebraic_identity} componentwise with
$x=\nabla{\bf u}^{n+1}$, $y=\nabla{\bf u}^n$, $z=\nabla{\bf u}^{n-1}$ yields,
for arbitrary $k\ge\tfrac12$,
\begin{align}
&((2k+1){\bf u}^{n+1} - 4k{\bf u}^n + (2k-1){\bf u}^{n-1},
-\Delta ((k+1){\bf u}^{n+1} - k{\bf u}^n)) \notag\\
=&A(\|\nabla{\bf u}^{n+1}\|^2 - \|\nabla{\bf u}^{n}\|^2)
+\|B\nabla{\bf u}^{n+1} - D\nabla{\bf u}^n\|^2
- \|B\nabla{\bf u}^{n} - D\nabla{\bf u}^{n-1}\|^2 \notag\\
& + E\|\nabla{\bf u}^{n+1} - \nabla{\bf u}^n\|^2
- F\|\nabla{\bf u}^{n} - \nabla{\bf u}^{n-1}\|^2
+G \|\nabla{\bf u}^{n+1} - 2\nabla{\bf u}^n + \nabla{\bf u}^{n-1} \|^2,
 \label{3.16_New}
\end{align}
where the constants $A=\frac{1}{2k}$, $B$, $D$, $E$, $F$, $G$ are those
specified in Lemma~\ref{lemma:algebraic_identity}; in particular $A>0$ and
$E>F\ge 0$ for $k\ge\tfrac12$.

Substituting \eqref{3.16_New} and \eqref{3.8}  into the
left-hand side of \eqref{3.7a-new}, and bounding the right-hand side via
\eqref{3.9-new} combined with \eqref{3.13-new} (note the 
telescoping term $\nu\delta t(\|\Delta{\bf u}^{n+1}\|^2-\|\Delta{\bf u}^n\|^2)$
is dropped in \cite{HuangShen2023}), we obtain
\begin{align}
&A(\|\nabla{\bf u}^{n+1}\|^2 - \|\nabla{\bf u}^{n}\|^2)
+\|B\nabla{\bf u}^{n+1} - D\nabla{\bf u}^n\|^2
- \|B\nabla{\bf u}^{n} - D\nabla{\bf u}^{n-1}\|^2
\notag\\
& + E\|\nabla{\bf u}^{n+1} - \nabla{\bf u}^n\|^2
- F\|\nabla{\bf u}^{n} - \nabla{\bf u}^{n-1}\|^2
+\nu\delta t (\|\Delta{\bf u}^{n+1}\|^2
-\|\Delta{\bf u}^{n}\|^2 )
\notag\\
&+ 2\nu\delta t \left(\frac{k-1}{k}\right)
\|\Delta \hat{\bf u}^{n+1}\|^2
+ \frac{2\nu\delta t}{k} \|\Delta{\bf u}^{n+1}\|^2
\notag\\
 \le\quad& \frac{2\nu\delta t}{\varepsilon_2}\left(\frac{1}{4} + \varepsilon  \right) \|\Delta \hat{\bf u}^n \|^2
+ \frac{C\nu\delta t}{8 \varepsilon^3 \varepsilon_2}
\|\nabla \hat{\bf u}^n \|^2
+\nu \varepsilon_2\delta t \|\Delta \hat{\bf u}^{n+1}\|^2.
\label{3.17-new}
\end{align}
The nonnegative term
$G\|\nabla{\bf u}^{n+1}-2\nabla{\bf u}^n+\nabla{\bf u}^{n-1}\|^2$ from
\eqref{3.16_New} and the nonnegative term
$\frac{\nu}{2}\|\Delta{\bf u}^{n+1}-\Delta{\bf u}^n\|^2$ from \eqref{3.8} have
been dropped on the LHS.

Summing \eqref{3.17-new} over $n=1,\dots,m$ for any $m\le T/\delta t-1$ dropping unnecessary terms yields
\begin{align}
&A\|\nabla {\bf u}^{m+1}\|^2
+ \|B\nabla {\bf u}^{m+1} - D\nabla {\bf u}^{m}\|^2
+ E\|\nabla {\bf u}^{m+1} - \nabla {\bf u}^{m}\|^2 + \frac{2\nu \delta t}{k}\sum_{n=1}^{m}\|\Delta {\bf u}^{n+1}\|^2
+ 2\nu \delta t \|\Delta {\bf u}^{m+1}\|^2 \notag \\
& + 2\nu \delta t
\Bigg[
\frac{k-1}{k} - \frac{\varepsilon_2}{2}
- \frac{1}{\varepsilon_2}
\left(\frac{1}{4}+\varepsilon\right)
\Bigg]
\sum_{n=1}^{m}\|\Delta\hat{\bf u}^{n+1}\|^2
\le  \frac{C\nu \delta t}{8\varepsilon^3 \varepsilon_2}
\sum_{n=1}^{m}\|\nabla \hat{\bf u}^n\|^2
+ M_0,
\label{3.17-summed}
\end{align}
where $M_0$ contains all initial terms. This yields the stability condition:
    \begin{align}
    \frac{k-1}{k} - \frac{\varepsilon_2}{2}
    \ge \frac{1}{\varepsilon_2} \left(\frac{1}{4}+\varepsilon\right).
    \label{eq:stability-condition}
    \end{align}
By Lemma\,\ref{lemma-k-values}, \eqref{eq:stability-condition} is satisfied
with the smallest integer $k=4$ when
\begin{align}
\varepsilon_2 =\frac{1}{\sqrt{2}}, \quad
\varepsilon   = \frac{1}{\sqrt{2}} \left( \frac{3}{4} - \frac{1}{\sqrt{2}}\right) \approx 0.03.
\label{def_epsandeps2}
\end{align}
Under these conditions, we discard some positive terms on the left of \eqref{3.17-summed} to get
\begin{align}
A\|\nabla {\bf u}^{m+1}\|^2
+ \frac{2\nu \delta t}{k}\sum_{n=1}^{m}\|\Delta {\bf u}^{n+1}\|^2
\le   \frac{C\nu \delta t}{ 8\varepsilon^3 \varepsilon_2}
\sum_{n=1}^{m}\|\nabla\hat{\bf u}^n \|^2
+ M_0.
\label{3.17-reduced}
\end{align}
With $A=\frac{1}{2k}$ from Lemma\,\ref{lemma:algebraic_identity}, multiplying
\eqref{3.17-reduced} by~$2k$ and using
$\|\nabla\hat{\bf u}^n\|^2\le 2(k+1)^2\bigl(\|\nabla{\bf u}^n\|^2+\|\nabla{\bf u}^{n-1}\|^2\bigr)$
gives
\begin{align}
\|\nabla{\bf u}^{m+1}\|^2 + 4\nu\delta t \sum_{n=0}^m
\|\Delta {\bf u}^{n+1}\|^2
&\le
\frac{Ck\nu \delta t}{4\varepsilon^3 \varepsilon_2 }
 \sum_{n=1}^m\|\nabla \hat{\bf u}^n\|^2 +C_I\notag\\
&\le
\frac{Ck(k+1)^2\nu \delta t}{2\varepsilon^3 \varepsilon_2 }
 \sum_{n=0}^m\|\nabla{\bf u}^n\|^2 +C_{II},
\label{3.17-after-new}
\end{align}
where $C_I$ and $C_{II}$ depend only on $k$, $\varepsilon$, $\varepsilon_2$,
and the initial data ${\bf u}^0,{\bf u}^1$ (the latter being computed by a
first-order consistent splitting scheme \cite{guermond2003new}, so that
$\|\nabla{\bf u}^1\|$ and $\|\Delta{\bf u}^1\|$ are bounded uniformly).

Applying the discrete Gronwall inequality (Lemma~2 of \cite{HuangShen2023},
which states: if nonnegative sequences satisfy
$a^{m+1}\le \alpha\,\delta t\sum_{n=0}^{m}a^n+\beta$ with
$\alpha\,\delta t<1$, then $a^{m+1}\le\beta\exp(\alpha T/(1-\alpha\delta t))$
for $m\delta t\le T$) to \eqref{3.17-after-new} with
$a^n=\|\nabla{\bf u}^n\|^2$,
$\alpha=\frac{Ck(k+1)^2\nu_{\max}}{2\varepsilon^3\varepsilon_2}$, and
$\beta=C_{II}$ yields, for all $n$ with $n\delta t\le T$,
\begin{align}
  \|\nabla\mathbf{u}^{n+1}\|^2
  + 4 \nu\,\delta t\sum_{i=0}^n\|\Delta\mathbf{u}^{i+1}\|^2
  \le  \tilde{C}_{3.7}
  \triangleq C_{II} \exp\left(
\frac{Ck(k+1)^2\nu_{\max} T}{2\varepsilon^3\varepsilon_2 }
 \right).
\label{def_tildeC3.7}
\end{align}

It remains to obtain the pressure bound. 
The identity \eqref{3.10} is applied to $p_s({\bf u}^{n+1})$ to earn  
$(\nabla p_s({\bf u}^{n+1}), \nabla q) = -\nu (\nabla\times \nabla\times {\bf u}^{n+1}, \nabla q)$.
Along with \eqref{3.6}, one obtains $(\nabla p^{n+1},\nabla q)=\nu (\nabla p_s({\bf u}^{n+1}), \nabla q)$. Taking $q=p^{n+1}$ results in 
$\| \nabla p^{n+1} \| \le \nu \| \nabla p_s({\bf u}^{n+1})\|$. 
Applying \eqref{lemma4-Liu}, we obtain 
$$
\frac{1}{\nu^2} \| \nabla p^{n+1}\|^2 \le \left(\frac{1}{2}+\epsilon\right) \| \Delta {\bf u}^{n+1} \|^2 
+ \frac{C}{\varepsilon^3} \|\nabla {\bf u}^{n+1} \|^2.
$$
Multiplying by $\nu\delta t$ and summing over $i=0,\dots,n$, then invoking
\eqref{def_tildeC3.7},
\begin{align}
\frac{\delta t}{\nu} \sum_{i=0}^n \|\nabla p^{i+1} \|^2
& \le \left(\frac{1}{2}+\varepsilon\right) \nu \delta t \sum_{i=0}^n
\| \Delta {\bf u}^{i+1} \|^2
+ \frac{C \nu \delta t}{\varepsilon^3} \sum_{i=0}^n \| \nabla {\bf u}^{i+1} \|^2 \notag\\
&\le  \left(\frac{1}{2}+\varepsilon\right) \tilde{C}_{3.7} + \frac{C\nu_{\max}\, T}{\varepsilon^3} \tilde{C}_{3.7}
\le \left(1 +\varepsilon  + \frac{C\nu_{\max} T}{\varepsilon^3} \right) \tilde{C}_{3.7}
\triangleq \frac{1}{2} C_{3.7}.
\label{def_C3.7}
\end{align}

Finally, adding \eqref{def_tildeC3.7} and \eqref{def_C3.7} produces
\eqref{3.7-new} with
\[
C_{3.7}=\left( 3+ 2\varepsilon+ \frac{2C\nu_{\max} T}{\varepsilon^3}\right) \tilde{C}_{3.7}.
\]
The constants $\tilde C_{3.7}$ and $C_{3.7}$ defined in
\eqref{def_tildeC3.7} and \eqref{def_C3.7} are monotonically increasing in
$\nu_{\max}$ and depend only on the initial condition, the final time~$T$,
and the parameter~$k$ (through $C_{II}$ and the explicit exponential factor).
The auxiliary constants $C_I$ and $C_{II}$ are independent of $\nu_{\max}$
altogether. All four are independent of the discretization parameters
$\delta t$ and~$n$. This completes the proof.
\end{proof}

\section{Generalized Error Analysis for Navier-Stokes Equations}
\label{sec_error_analysis}
The original error analysis for the scheme \eqref{HSscheme-a}-\eqref{HSscheme-e}, Theorem 7 in \cite{HuangShen2023}, is only for $\nu=1$ and $k\ge 5$.
Our analysis generalize their result to arbitrary viscosity $\nu > 0$ and reduce the stabilization parameter from $k=5$ to $k=4$. While we note that \cite{HuangShen2025} has reduced this requirement to $k=3$ for the second order scheme without GSAV, our work provides the necessary framework for tracking viscosity dependence.

The following lemma refines the ranges of $\xi$ and $\eta$ in \eqref{5.8-new} and \eqref{5.9-new}.
\begin{lemma}
\label{lemma_xi_eta}
Assume $|1-\xi| \le C_0 \delta t$ holds for $C_0>1$ and 
$0<\delta t \le \frac{1}{2C_0^2}$. 
Let $\eta=1-(1-\xi)^2$.
Then 
\begin{align}
\tfrac{1}{2} < \xi < \tfrac{3}{2},\quad 
\tfrac34 < \eta \le 1.
\label{estimate_xi_eta}
\end{align}
\end{lemma}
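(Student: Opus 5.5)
The plan is a direct elementary estimate: first bound $|1-\xi|$ numerically using the two hypotheses, then read off the ranges for $\xi$ and $\eta$.

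\textbf{Step 1: a numerical bound on $|1-\xi|$.} From $|1-\xi|\le C_0\delta t$ and $\delta t\le \frac{1}{2C_0^2}$ we get
\[
|1-\xi|\le \frac{C_0}{2C_0^2}=\frac{1}{2C_0}.
\]
Since $C_0>1$, the right-hand side is strictly less than $\tfrac12$. Hence $|1-\xi|<\tfrac12$, which is exactly $\tfrac12<\xi<\tfrac32$.

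\textbf{Step 2: the range of $\eta$.} By definition $\eta=1-(1-\xi)^2$, so $\eta\le 1$ because the square is nonnegative. For the lower bound we square the estimate from Step 1:
\[
(1-\xi)^2\le \frac{1}{4C_0^2}<\frac14,
\]
again using $C_0>1$. Therefore $\eta>1-\tfrac14=\tfrac34$.

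The only point needing care is where strictness enters. It comes entirely from the hypothesis $C_0>1$; the inequality $\delta t\le\frac{1}{2C_0^2}$ alone gives only non-strict bounds. With $C_0=1$ one would obtain $\tfrac12\le\xi\le\tfrac32$ and $\eta\ge\tfrac34$, not the strict versions. So the main (mild) obstacle is tracking this, and I would make the use of $C_0>1$ explicit at each of the two strict inequalities. Otherwise the argument is routine and needs no earlier results of the paper.
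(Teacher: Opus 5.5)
Your proof is correct and follows the paper's argument: the paper also chains $|1-\xi|\le C_0\delta t\le \frac{1}{2C_0}<\frac12$ to get $\frac12<\xi<\frac32$, and then reads off $\frac34<\eta\le 1$ from $\eta=1-(1-\xi)^2$. The only cosmetic difference is that you bound $(1-\xi)^2\le \frac{1}{4C_0^2}$ directly instead of deducing the $\eta$-range from the $\xi$-range, which gives the slightly sharper bound $\eta\ge 1-\frac{1}{4C_0^2}$.
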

\begin{proof}
$ $\newline
Using $|1-\xi| \le C_0\,\delta t$, $C_0>1$ and 
$\delta t \le \frac{1}{2C_0^2}$,  
we obtain
\[
\frac{1}{2}
< 1 - \frac{1}{2C_0} \le 1 - C_0\delta t
\le \xi \le 1 + C_0\delta t
\le 1 + \frac{1}{2C_0} < \frac{3}{2}.
\]
Hence, $\frac{1}{2} < \xi < \tfrac{3}{2}$. 
Note $\eta= 1-(1-\xi)^{2}=2\,\xi- \xi^{2}$.
It is easy to derive that $\frac{3}{4}< \eta \le 1$ from the range of $\xi$.

\end{proof}

The weak stability result of Huang and Shen
\cite{HuangShen2023} (Theorem~6) is slightly revised below. 
\begin{theorem}[Updated Theorem 6 of \cite{HuangShen2023}]
\label{thm:HS2023_Thm6}
Let $\|{\bf f}(\cdot,t)\| \le C_f$ for all $t \in [0,T]$, and let
$\bar{\bf u}^{n+1}$, ${\bf u}^{n+1}$ be the solution of the scheme
\eqref{HSscheme}. For all $\delta t > 0$, choose
\begin{equation}
\bar C \;\ge\; \max\bigl\{ 2\,\delta t^{2}\, C_f^{2},\; 2\,C_f^{2},\; 1\bigr\}.
\label{eq:HS2023_Cbar_condition}
\end{equation}
Then, given $r^{n} \ge 0$, we have $r^{n+1} \ge 0$,
$\xi^{n+1} \ge 0$, and there exists a constant $M_T > 0$ depending only on $T$, ${\bf u}(0)$, and $C_f$ (see \eqref{M_T_form}) such that
\begin{equation}
\nu\, \delta t\, \sum_{j=0}^{n} \xi^{j+1}\,\|\nabla \bar{\bf u}^{j+1}\|^{2},
\quad
\|{\bf u}^{n+1}\|,
\quad
r^{n+1}
\;\le\; M_T,
\qquad \forall\, n+1 \le \frac{T}{\delta t}.
\label{eq:HS2023_Thm6_bound}
\end{equation}
\end{theorem}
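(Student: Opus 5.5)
The argument reduces to the GSAV equation \eqref{HSscheme-c}, which is scalar. I will reproduce Huang--Shen's weak-stability argument while keeping $\nu$ symbolic throughout.

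\textbf{Solving for $r^{n+1}$.} Set $E^{n+1}=E(\bar{\bf u}^{n+1})+\bar C>0$. Solving \eqref{HSscheme-c} for $r^{n+1}$ gives
\begin{align*}
r^{n+1}=\frac{r^n}{1+\delta t\,\bigl(\nu\|\nabla\bar{\bf u}^{n+1}\|^2-({\bf f}^{n+1},\bar{\bf u}^{n+1})\bigr)/E^{n+1}}.
\end{align*}
I will show that the denominator is positive. Using Cauchy--Schwarz and Young,
\begin{align*}
|({\bf f}^{n+1},\bar{\bf u}^{n+1})|\le C_f\|\bar{\bf u}^{n+1}\|\le \frac{1}{2\delta t}\cdot\frac12\|\bar{\bf u}^{n+1}\|^2+\delta t\,C_f^2 .
\end{align*}
Hence $\delta t\,|({\bf f},\bar{\bf u})|\le \tfrac12 E(\bar{\bf u}^{n+1})+\delta t^2C_f^2\le \tfrac12 E^{n+1}$, because $\bar C\ge 2\delta t^2C_f^2$. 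The viscous term $\nu\|\nabla\bar{\bf u}^{n+1}\|^2$ is nonnegative. So the denominator is at least $1/2$, and $r^n\ge0$ implies $r^{n+1}\ge0$. Then $\xi^{n+1}=r^{n+1}/E^{n+1}\ge0$.

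\textbf{Energy bound.} Rewrite \eqref{HSscheme-c} as
\begin{align*}
r^{n+1}-r^n=-\nu\delta t\,\xi^{n+1}\|\nabla\bar{\bf u}^{n+1}\|^2+\delta t\,\xi^{n+1}({\bf f}^{n+1},\bar{\bf u}^{n+1}).
\end{align*}
For the forcing term I will use $|({\bf f},\bar{\bf u})|\le C_f\|\bar{\bf u}\|\le C_f^2+\tfrac14\|\bar{\bf u}\|^2\le E^{n+1}$, where the last step uses $\bar C\ge 2C_f^2$ (the exact Young split can be adjusted). This gives $\xi^{n+1}|({\bf f},\bar{\bf u})|\le r^{n+1}$. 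Summing over $j=0,\dots,n$ then yields
\begin{align*}
r^{n+1}+\nu\delta t\sum_{j=0}^{n}\xi^{j+1}\|\nabla\bar{\bf u}^{j+1}\|^2\le r^0+\delta t\sum_{j=0}^{n} r^{j+1}.
\end{align*}
Applying the discrete Gronwall lemma (Lemma~2 of \cite{HuangShen2023}, as quoted above) with $\alpha=1$ requires $\delta t<1$. If one wants no restriction on $\delta t$, I would instead keep the implicit $r^{n+1}$ on the left as $(1-\delta t)r^{n+1}$, or use the cruder bound $r^{n+1}\le r^n+\delta t\,r^{n+1}$. Either way the result is $r^{n+1}\le M_T$, with $M_T$ of the form $(E({\bf u}(0))+\bar C)e^{cT}$, and the viscous sum is bounded by the same quantity. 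Since $\nu$ appears only with a favorable sign, $M_T$ is independent of $\nu$; this is the point of the revision.

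\textbf{Bound on $\|{\bf u}^{n+1}\|$.} From $\xi=r/E^{n+1}$ and $E^{n+1}\ge\bar C\ge1$ we get $0\le\xi^{n+1}\le r^{n+1}\le M_T$. Also $|\eta^{n+1}|=|\xi^{n+1}|\,|2-\xi^{n+1}|$, so
\begin{align*}
\tfrac12\|{\bf u}^{n+1}\|^2=|\eta^{n+1}|^2E(\bar{\bf u}^{n+1})\le (\xi^{n+1})^2(2+\xi^{n+1})^2E^{n+1}=(2+\xi^{n+1})^2\,\xi^{n+1}r^{n+1}\le (2+M_T)^2M_T^2 .
\end{align*}
After possibly enlarging $M_T$, this gives $\|{\bf u}^{n+1}\|\le M_T$.

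\textbf{Main obstacle.} The delicate step is the first one: obtaining the positivity of the denominator (and hence the $\delta t^2$ condition on $\bar C$) for all $\delta t$, while keeping the resulting constants free of any $\nu$-dependence. Making the Gronwall step uniform for large $\delta t$ also needs care; I would handle it by the implicit form above.
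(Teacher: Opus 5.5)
\textbf{The large-$\delta t$ case fails as written.} The statement explicitly covers ``for all $\delta t>0$'', but your argument does not. Both remedies you propose reduce to $(1-\delta t)\,r^{n+1}\le r^n$, so both depend on dividing by $1-\delta t$. They give nothing when $\delta t\ge 1$. For $\delta t<1$ the resulting bound $r^0(1-\delta t)^{-T/\delta t}$ is not uniform and blows up as $\delta t\to 1^-$. So the claim ``either way the result is $r^{n+1}\le M_T$'' is false.

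The missing ingredient is already in your first step. Since the denominator is at least $\tfrac12$, you have $r^{n+1}\le 2r^n$ for every $\delta t>0$.
\begin{itemize}
\item If $\delta t\ge\tfrac12$, there are at most $2T$ steps, so $r^{n+1}\le 4^T r^0$.
\item If $\delta t<\tfrac12$, then $-\ln(1-\delta t)\le 2\delta t$, which gives $r^{n+1}\le r^0(1-\delta t)^{-(n+1)}\le e^{2T}r^0$.
\end{itemize}
Hence $r^{n+1}\le e^{2T}r^0$ uniformly in $\delta t$. Your summed inequality then bounds the viscous sum by $(1+Te^{2T})\,r^0$, and your $\|{\bf u}^{n+1}\|$ argument goes through unchanged.

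\textbf{Comparison with the paper.} Apart from this, your proof is the Huang--Shen argument, which the paper does not reproduce. The paper only reads $M_T\ge r^0\bigl(1+2(1+T)e^T\bigr)$ off the last line of the proof in \cite{HuangShen2023}. It then observes that $r^0=\tfrac12\|{\bf u}(0)\|^2+\bar C$ with $\bar C\ge\max\{2\delta t^2C_f^2,2C_f^2,1\}$.

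The following steps of yours are correct:
\begin{itemize}
\item the positivity bound using $\bar C\ge 2\delta t^2C_f^2$;
\item the estimate $\xi^{n+1}|({\bf f}^{n+1},\bar{\bf u}^{n+1})|\le r^{n+1}$ using $\bar C\ge 2C_f^2$;
\item the bound on $\|{\bf u}^{n+1}\|$ via $\eta=\xi(2-\xi)$ and $\xi^{n+1}\le r^{n+1}$.
\end{itemize}

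One mislabel: the revision the paper actually makes is that $M_T$ depends on ${\bf u}(0)$ and $C_f$, not that it is independent of $\nu$. Your explicit $M_T$, built from $E({\bf u}(0))+\bar C$, does display that dependence, so your result is consistent with the paper's; only the stated ``point of the revision'' is misidentified.
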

The only change is that Theorem~6 in \cite{HuangShen2023} claims that $M_T$ depends only on $T$. However, from the last line of their proof,
\[
M_T \ge r^0 \bigl(1+ 2(1+T)e^T \bigr),
\]
and by using the definition of $r^0$ in \eqref{def_SAV} together with the condition of $\bar{C}$ in \eqref{eq:HS2023_Cbar_condition}, we further obtain
\begin{align}
M_T 
&\ge 
\left(\frac{1}{2}\|{\bf u}(0)\|^2 + \bar{C}\right)
\bigl(1+ 2(1+T)e^T \bigr)\notag \\
&\ge 
\left(
\frac{1}{2}\|{\bf u}(0)\|^2 
+ \max\bigl\{2\delta t^{2} C_f^{2},\; 2C_f^{2},\; 1\bigr\}
\right)
\bigl(1+ 2(1+T)e^T \bigr).
\label{M_T_form}
\end{align}
Therefore, $M_T$ depends not only on $T$, but also on the initial data ${\bf u}(0)$ and the magnitude of the external force $C_f$.

The inequality (5.13) in \cite{HuangShen2023} appears to contain a gap and is therefore replaced by the following lemma.
\begin{lemma}
\label{lemma5.13-new}
Suppose $\bar{\bf u}^i\in {\bf H}^1_0(\Omega)\cap {\bf H}^2(\Omega)$,  
$\|\bar{\bf u}^{i}\| \le 2 M_T$ for some $M_T>0$, and $|\eta^i|\le 1$ for all $i$. 
Denote $\widehat {\bf u}^{\,i}=(k+1)\eta^{i}\,\bar {\bf u}^{\,i}-k\,\eta^{\,i-1}\,\bar {\bf u}^{\,i-1}$
and
${\bf \tilde{u}}^{i+1}=(k+1) {\bar{\bf u}}^{i+1} - k {\bar{\bf u}}^i$. 
Then for any $\varepsilon>0$, 
\begin{align}
|\bigl(\widehat {\bf u}^{\,i}\!\cdot\!\nabla \widehat {\bf u}^{i},\;\Delta\tilde {\bf u}^{\,i+1}\bigr)|
\le 
\varepsilon \big( \|\Delta\bar {\bf u}^i\|^2
+ \|\Delta\bar {\bf u}^{\,i-1}\|^2
+ \|\Delta\tilde {\bf u}^{\,i+1}\|^2 \big)
+  \frac{4C^3(k+1)^6 M_T^2}{\varepsilon^3} \big(\|\nabla\bar {\bf u}^i\|^4+\|\nabla\bar {\bf u}^{\,i-1}\|^4\big).
\label{5.13-new}
\end{align}
where $C$ is constant shown in the elliptic regularity estimate \eqref{elliptic_regularity} and depends only on the domain.

\end{lemma}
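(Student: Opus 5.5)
The plan is to apply the two-dimensional trilinear estimate \eqref{convection_estimate} with ${\bf u}={\bf v}=\widehat{\bf u}^{\,i}$ and ${\bf w}=\Delta\tilde{\bf u}^{\,i+1}$. This gives
\[
|(\widehat{\bf u}^{\,i}\cdot\nabla\widehat{\bf u}^{\,i},\Delta\tilde{\bf u}^{\,i+1})|
\le C\,\|\widehat{\bf u}^{\,i}\|^{1/2}\,\|\nabla\widehat{\bf u}^{\,i}\|\,\|\widehat{\bf u}^{\,i}\|_2^{1/2}\,\|\Delta\tilde{\bf u}^{\,i+1}\|.
\]
The elliptic regularity estimate \eqref{elliptic_regularity} then replaces $\|\widehat{\bf u}^{\,i}\|_2$ by $C\|\Delta\widehat{\bf u}^{\,i}\|$. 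Note that $\widehat{\bf u}^{\,i}$ still lies in ${\bf H}^1_0\cap{\bf H}^2$, since it is a linear combination of $\bar{\bf u}^i$ and $\bar{\bf u}^{i-1}$ with scalar coefficients. After this step, every factor carrying second derivatives appears only through $\|\Delta\widehat{\bf u}^{\,i}\|^{1/2}\|\Delta\tilde{\bf u}^{\,i+1}\|$, which has total homogeneity $3/2$.

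Next I will apply Young's inequality with exponents $4/3$ and $4$. The $4/3$ power goes to the second-derivative block and the $4$th power to $C\|\widehat{\bf u}^{\,i}\|^{1/2}\|\nabla\widehat{\bf u}^{\,i}\|$. Concretely, one can first use $ab\le \varepsilon' b^2+a^2/(4\varepsilon')$ with $b=\|\Delta\tilde{\bf u}^{\,i+1}\|$. The remaining term $\|\widehat{\bf u}^{\,i}\|\,\|\nabla\widehat{\bf u}^{\,i}\|^2\,\|\Delta\widehat{\bf u}^{\,i}\|$ is then split again as $\varepsilon''\|\Delta\widehat{\bf u}^{\,i}\|^2+(\cdot)^2/(4\varepsilon'')$. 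This produces a bound of the form
\[
\varepsilon'\|\Delta\tilde{\bf u}^{\,i+1}\|^2+\varepsilon''\|\Delta\widehat{\bf u}^{\,i}\|^2+\frac{c\,C^{3}}{\varepsilon'^2\varepsilon''}\|\widehat{\bf u}^{\,i}\|^2\|\nabla\widehat{\bf u}^{\,i}\|^4,
\]
which yields the $\varepsilon^{-3}$ scaling once $\varepsilon'$ and $\varepsilon''$ are chosen proportional to $\varepsilon$.

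The remaining step is to return to the barred variables using $|\eta^j|\le1$ and $\|\bar{\bf u}^j\|\le 2M_T$. First, $\|\widehat{\bf u}^{\,i}\|\le (k+1)\|\bar{\bf u}^i\|+k\|\bar{\bf u}^{i-1}\|\le 2(2k+1)M_T$. Second, $\|\nabla\widehat{\bf u}^{\,i}\|^4\le 8(k+1)^4(\|\nabla\bar{\bf u}^i\|^4+\|\nabla\bar{\bf u}^{i-1}\|^4)$ by $(a+b)^4\le 8(a^4+b^4)$. Third, $\|\Delta\widehat{\bf u}^{\,i}\|^2\le 2(k+1)^2(\|\Delta\bar{\bf u}^i\|^2+\|\Delta\bar{\bf u}^{i-1}\|^2)$. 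The factor $2(k+1)^2$ in the last inequality is absorbed by choosing $\varepsilon''=\varepsilon/(2(k+1)^2)$, and $\varepsilon'=\varepsilon$. This choice is also where the extra powers of $(k+1)$ in the final constant come from.

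The main obstacle is the constant bookkeeping: checking that the accumulated factors ($C$ from \eqref{convection_estimate} and \eqref{elliptic_regularity}, the Young constants, $(2k+1)^2\le 4(k+1)^2$, and the $8(k+1)^4$ and $2(k+1)^2$ factors) are in fact dominated by $4C^3(k+1)^6M_T^2/\varepsilon^3$. Since the paper fixes $C$ as a single domain constant (possibly $\ge1$), I would take $C\ge1$ and bound $C^2$ by $C^3$. If the numerical prefactor does not come out at $4$ with the first choice of $\varepsilon',\varepsilon''$, I would first retune the split of $\varepsilon$ between the two Young steps, for example by using weighted Young with exact $4/3$–$4$ constants directly on the product, before changing anything else.
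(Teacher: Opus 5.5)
Your route differs from the paper's. The paper expands $\widehat{\bf u}^{\,i}\cdot\nabla\widehat{\bf u}^{\,i}$ bilinearly into four pieces, and bounds each coefficient by $(k+1)^2$ using $|\eta^j|\le 1$. It applies \eqref{convection_estimate} and \eqref{elliptic_regularity} to each piece and splits off $\|\Delta\tilde{\bf u}^{\,i+1}\|^2$ with a parameter $\tilde\varepsilon$. It then treats $\|\Delta\bar{\bf u}^{i}\|$, $\|\Delta\bar{\bf u}^{i-1}\|$ and the cross products $\|\nabla\bar{\bf u}^{i}\|\,\|\nabla\bar{\bf u}^{i-1}\|$ with a second parameter $\varepsilon'=\tilde\varepsilon^2/2$, and finally takes $\tilde\varepsilon=\varepsilon/(C(k+1)^2)$. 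You instead apply the trilinear estimate once to $\widehat{\bf u}^{\,i}$ itself, which is legitimate since it lies in ${\bf H}^1_0\cap{\bf H}^2$, and return to barred variables only at the end. This avoids the cross terms and is shorter, and each inequality you write is valid.

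The step you defer to bookkeeping does not close, and no retuning of $\varepsilon',\varepsilon''$ will make it close.
\begin{itemize}
\item Your second Young step gives the coefficient $C^6/(64\varepsilon'^2\varepsilon'')$, not $cC^3/(\varepsilon'^2\varepsilon'')$, because the factor $C^{3/2}$ is squared twice. Replacing $C^2$ by $C^3$ via $C\ge1$ therefore goes the wrong way.
\item The bound $\|\widehat{\bf u}^{\,i}\|^2\le 4(2k+1)^2M_T^2$ carries a factor $k^2$ that you did not count.
\end{itemize}
With $\varepsilon'=\varepsilon$ and $\varepsilon''=\varepsilon/(2(k+1)^2)$ you get exactly $\varepsilon(\cdots)$ on the second-derivative terms, and on the quartic terms
\[
\frac{C^6(k+1)^6(2k+1)^2M_T^2}{\varepsilon^3}\le\frac{4C^6(k+1)^8M_T^2}{\varepsilon^3}.
\]
If, as the paper does, one merges the constants of \eqref{convection_estimate} and \eqref{elliptic_regularity} into a single $C$, this becomes $4C^4(k+1)^8M_T^2/\varepsilon^3$.

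That is precisely what the paper's own proof delivers. With its choices the quartic coefficient is $C(k+1)^2\cdot 4M_T^2/\tilde\varepsilon^3=4C^4(k+1)^8M_T^2/\varepsilon^3$. The constant displayed in \eqref{5.13-new} equals $4M_T^2/\tilde\varepsilon^3$, i.e. the outer factor $C(k+1)^2$ has been dropped.

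The power $(k+1)^6$ is in fact unattainable by any argument. Take $\eta^i=1$, $\bar{\bf u}^{i-1}=0$, $\bar{\bf u}^{i}=\phi$, and $\tilde{\bf u}^{\,i+1}=\psi$ fixed with $(\phi\cdot\nabla\phi,\Delta\psi)\neq 0$ (e.g.\ $\Delta\psi=\phi\cdot\nabla\phi$). Then $\bar{\bf u}^{i+1}=(\psi+k\phi)/(k+1)$ has norm at most $\max\{\|\phi\|,\|\psi\|\}$, so $M_T$ can be fixed independently of $k$. The left side of \eqref{5.13-new} equals $(k+1)^2|(\phi\cdot\nabla\phi,\Delta\psi)|$. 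The right side, minimized over $\varepsilon$, is a $k$-independent multiple of $(k+1)^{3/2}$, so the stated inequality fails for large $k$. With $(k+1)^8$ both sides scale like $(k+1)^2$.

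So what your argument proves, like the paper's, is \eqref{5.13-new} with constant $4C^4(k+1)^8M_T^2/\varepsilon^3$ (in the paper's convention). This is harmless for Theorem~\ref{Theorem 7-new}: only the $M_T^2\varepsilon^{-3}$ structure enters $C_{100}(\varepsilon_1)$, and its $\nu$-dependence is unchanged.
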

\begin{proof}
$ $\newline
Denote $
I:=\bigl(\widehat {\bf u}^{\,i}\!\cdot\!\nabla \widehat {\bf u}^{i},\;\Delta\tilde {\bf u}^{\,i+1}\bigr)$.
Substituting and using the triangle inequality 
gives
\begin{align*}
|I|
&= \Bigl|\Bigl(\big[(k+1)\eta^{i}\bar {\bf u}^{\,i}-k\eta^{\,i-1}\bar {\bf u}^{\,i-1}\big]\!\cdot\!\nabla \big[(k+1)\eta^{i}\bar {\bf u}^{\,i}-k\eta^{\,i-1}\bar {\bf u}^{\,i-1}\big],
\Delta\tilde {\bf u}^{\,i+1}\Bigr)\Bigr|\\[4pt]
&\le (k+1)^2\,|\eta^{i}|^2\,
    \bigl|(\bar {\bf u}^{\,i}\!\cdot\!\nabla \bar {\bf u}^{i},\,\Delta\tilde {\bf u}^{\,i+1})\bigr|
  +(k+1)k\,|\eta^{i}\eta^{\,i-1}|\,
    \bigl|(\bar {\bf u}^{\,i}\!\cdot\!\nabla \bar {\bf u}^{\,i-1},\,\Delta\tilde {\bf u}^{\,i+1})\bigr|\\[4pt]
&\quad +k(k+1)\,|\eta^{\,i-1}\eta^{i}|\,
    \bigl|(\bar {\bf u}^{\,i-1}\!\cdot\!\nabla \bar {\bf u}^{i},\,\Delta\tilde {\bf u}^{\,i+1})\bigr|
  +k^2\,|\eta^{\,i-1}|^2\,
    \bigl|(\bar {\bf u}^{\,i-1}\!\cdot\!\nabla \bar{\bf u}^{\,i-1},\,\Delta\tilde {\bf u}^{\,i+1})\bigr|.
\end{align*}
By using \eqref{convection_estimate} and \eqref{elliptic_regularity} for $\tilde{\bf u} \in {\bf H}^1_0(\Omega)\cap {\bf H}^2(\Omega)$, we get
\[
\begin{aligned}
\bigl|(\bar {\bf u}^{i}\!\cdot\!\nabla \bar {\bf u}^{i},\;\Delta\tilde {\bf u}^{\,i+1})\bigr|
&\le C\,\|\bar {\bf u}^{i}\|^{1/2}\,\|\nabla\bar {\bf u}^{i}\|\,\|\Delta\bar {\bf u}^{i}\|^{1/2}\,\|\Delta\tilde {\bf u}^{\,i+1}\|,\\[6pt]
\bigl|(\bar {\bf u}^{i}\!\cdot\!\nabla \bar {\bf u}^{\,i-1},\;\Delta\tilde {\bf u}^{\,i+1})\bigr|
&\le C\,\|\bar {\bf u}^{i}\|^{1/2}\,\|\nabla\bar {\bf u}^{i}\|^{1/2}\,
     \|\nabla\bar {\bf u}^{\,i-1}\|^{1/2}\,\|\Delta\bar {\bf u}^{\,i-1}\|^{1/2}\,\|\Delta\tilde {\bf u}^{\,i+1}\|,\\[6pt]
\bigl|(\bar {\bf u}^{\,i-1}\!\cdot\!\nabla \bar {\bf u}^{i},\;\Delta\tilde {\bf u}^{\,i+1})\bigr|
&\le C\,\|\bar {\bf u}^{\,i-1}\|^{1/2}\,\|\nabla\bar {\bf u}^{\,i-1}\|^{1/2}\,
     \|\nabla\bar {\bf u}^{i}\|^{1/2}\,\|\Delta\bar {\bf u}^{i}\|^{1/2}\,\|\Delta\tilde {\bf u}^{\,i+1}\|,\\[6pt]
\bigl|(\bar {\bf u}^{\,i-1}\!\cdot\!\nabla \bar {\bf u}^{\,i-1},\;\Delta\tilde {\bf u}^{\,i+1})\bigr|
&\le C\,\|\bar {\bf u}^{\,i-1}\|^{1/2}\,\|\nabla\bar {\bf u}^{\,i-1}\|\,\|\Delta\bar {\bf u}^{\,i-1}\|^{1/2}\,\|\Delta\tilde {\bf u}^{\,i+1}\|.
\end{aligned}
\]

\medskip

\noindent Using $|\eta^i|\le 1$ from \eqref{5.9-new} and  Young's inequality with $\tilde{\varepsilon}>0$ to the above terms, one obtains
\begin{align}
|I| & \le C (k+1)^2 \left[ \tilde\varepsilon\|\Delta\tilde {\bf u}^{\,i+1}\|^{2} 
 + \frac{1}{\tilde\varepsilon}\Big(
\|\bar {\bf u}^{i}\|\,\|\nabla\bar {\bf u}^{i}\|^{2}\,\|\Delta\bar {\bf u}^{i}\|
+ \|\bar {\bf u}^{i}\|\,\|\nabla\bar {\bf u}^{i}\|\,\|\nabla\bar {\bf u}^{\,i-1}\|\,\|\Delta\bar {\bf u}^{\,i-1}\|
\right.
\notag
\\
& \left. \qquad\qquad
+ \|\bar {\bf u}^{\,i-1}\|\,\|\nabla\bar {\bf u}^{\,i-1}\|\,\|\nabla\bar {\bf u}^{i}\|\,\|\Delta\bar {\bf u}^{i}\|
+ \|\bar {\bf u}^{\,i-1}\|\,\|\nabla\bar {\bf u}^{\,i-1}\|^{2}\,\|\Delta\bar {\bf u}^{\,i-1}\|
\Big) \right].
\end{align}
Applying Young's inequality with $\varepsilon'>0$ on terms involving 
$\|\Delta\bar {\bf u}^{i}\|$ and $\|\Delta\bar {\bf u}^{i-1}\|$ 
and $\|\bar {\bf u}^i\|\le 2M_T$, $\forall i$ yields
\begin{align}
|I| \le 
C(k+1)^2 \left[ 
\tilde\varepsilon \|\Delta\tilde {\bf u}^{\,i+1}\|^2
+ \frac{2\varepsilon'}{\tilde{\varepsilon}}
 ( \|\Delta\bar {\bf u}^{i}\|^2 + 
   \|\Delta\bar {\bf u}^{\,i-1}\|^2 )
+ \frac{2M_T^2}{\tilde{\varepsilon} \varepsilon'}
  ( \|\nabla\bar {\bf u}^i\|^4 +
   \|\nabla\bar {\bf u}^{\,i-1}\|^4)
   \right]
\end{align}
The final result \eqref{5.13-new} is obtained by choosing $\varepsilon'=\tilde{\varepsilon}^2/2$ and
$\tilde{\varepsilon}=\frac{\varepsilon}{C(k+1)^2}$.
\end{proof}

\begin{theorem}[Updated Theorem 7 of \cite{HuangShen2023}; Error estimates for general $\nu$ and $k \geq 4$; spatial dimension=2]
\label{Theorem 7-new}
Consider the 2D Navier-Stokes equations (1.1) with initial data $\mathbf{u}_0 \in \mathbf{V} \cap \mathbf{H}^2_0$, where ${\bf V}=\{{\bf v}\in {\bf H}^1_0(\Omega): \nabla\cdot {\bf v}=0 \}$, 
and viscosity $\nu$ satisfies $0<\nu<\nu_{max}$ for some $\nu_{max}>0$.
Suppose the external force satisfies $\|{\bf f}(\cdot,t)\| \le C_f$ for all $t\in[0,T]$,
and the GSAV shift $\bar C$ satisfies
\begin{align}
\bar C \;\ge\; \max\bigl\{ 2\,\delta t^{2}\, C_f^{2},\; 2\,C_f^{2},\; 1\bigr\}.
\label{eq:Cbar_condition_Thm44}
\end{align}
Under the temporal regularity conditions
\begin{align*}
\frac{\partial\mathbf{u}}{\partial t} \in L^2(0,T;{\bf H}^1), 
\quad
\frac{\partial^2\mathbf{u}}{\partial t^2} \in L^2(0,T;{\bf H}^2),
\quad
\frac{\partial^3\mathbf{u}}{\partial t^3} \in L^2(0,T;{\bf L}^2), \quad
\frac{\partial^2 p}{\partial t^2} \in L^2(0,T;H^1), 
\end{align*}
there exists $C_0>1$ such that  when $\delta t \leq \frac{1}{1+2C_0^2}$, the numerical scheme \eqref{HSscheme-a}-\eqref{HSscheme-e} with $k \geq 4$ satisfies
\begin{align}
\|\nabla\bar{\mathbf{e}}^{n+1}\|^2 &+ \|\nabla \mathbf{e}^{n+1}\|^2 + \nu \delta t \sum_{i=0}^{n+1} \left( \|\Delta\bar{\mathbf{e}}^i\|^2 + \|\Delta \mathbf{e}^i\|^2 \right) + \frac{\delta t}{\nu} \sum_{i=0}^{n+1} \|\nabla e^i_p\|^2  \leq C_{5.3}\delta t^4
\label{5.3-new}
\end{align}
where $C_0, C_{5.3}> 0$ depend on $T$, $\Omega$, $\frac{1}{\nu}$, and the exact solution and the external force, but are independent of $\delta t$.
\end{theorem}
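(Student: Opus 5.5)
The plan is to follow the structure of Theorem~7 in \cite{HuangShen2023}, as Theorem~\ref{Theorem 5-New} followed Theorem~5, keeping $\nu$ symbolic throughout. The argument is an induction on $n$ with the hypothesis
\[
|1-\xi^{j}|\le C_0\,\delta t \quad\text{and}\quad \|\nabla\bar{\bf e}^{j}\|^2+\|\nabla{\bf e}^{j}\|^2\le C\,\delta t^4 \quad\text{for all } j\le n .
\]
Lemma~\ref{lemma_xi_eta} then gives $\tfrac12<\xi^j<\tfrac32$ and $\tfrac34<\eta^j\le1$. Combined with Theorem~\ref{thm:HS2023_Thm6}, this yields $\|\bar{\bf u}^j\|\le 2M_T$, so Lemma~\ref{lemma5.13-new} applies. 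The condition $\delta t\le 1/(1+2C_0^2)$ ensures $\delta t\le 1/(2C_0^2)$ and also $\delta t<1$ for the Gronwall step.

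\emph{Error equations.} First I will subtract the exact equations at $t^{n+k}$, using the truncation expansions of Remark~\ref{remark1}, from \eqref{HSscheme-a}. This gives a BDF-type equation for $\bar{\bf e}$ with four right-hand pieces:
\begin{itemize}
\item a truncation residual $R^{n}$ with $\|R^n\|^2\lesssim \delta t^3\int\|\partial_t^3{\bf u}\|^2+\nu^2\delta t^3\int\|\partial_t^2{\bf u}\|_2^2$;
\item the convection difference $\hat{\bf u}^n\cdot\nabla\hat{\bf u}^n-\hat{\bf u}(t^n)\cdot\nabla\hat{\bf u}(t^n)$;
\item the pressure error $\nabla\hat e_p^n$ plus a pressure extrapolation residual controlled by $\partial_t^2p\in L^2(H^1)$;
\item the term $\hat{\bf e}^n$ versus $\tilde{\bf e}^n$, coming from ${\bf u}^i=\eta^i\bar{\bf u}^i$.
\end{itemize}
Note that ${\bf e}^i-\bar{\bf e}^i=(\eta^i-1)\bar{\bf u}^i$ and $|\eta^i-1|=(1-\xi^i)^2\le C_0^2\delta t^2$. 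So $\hat{\bf e}^n-\tilde{\bf e}^n$ is $O(\delta t^2)$ in $H^2$, provided $\|\Delta\bar{\bf u}^i\|$ stays bounded, which follows from the induction bound on $\bar{\bf e}$ and the regularity of ${\bf u}$.

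\emph{Main energy estimate.} I will test the error equation with $-\Delta\tilde{\bf e}^{n+1}$, as in the proof of Theorem~\ref{Theorem 5-New}, and handle the terms as follows.
\begin{itemize}
\item The time-difference term gives the telescoping structure of Lemma~\ref{lemma:algebraic_identity} in $\nabla\bar{\bf e}$.
\item The viscous term gives, via \eqref{3.7b-new}, $\nu\frac{k-1}{k}\|\Delta\tilde{\bf e}^{n+1}\|^2+\frac{\nu}{k}\|\Delta\bar{\bf e}^{n+1}\|^2$.
\item For the pressure error, the projection equation \eqref{HSscheme-b} minus its exact counterpart, together with \eqref{3.10}, gives $\nabla e_p^{n+1}=\nu\nabla p_s(\bar{\bf e}^{n+1})$ plus a convection-difference projection and a truncation term. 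Then \eqref{3.9-new}, \eqref{3.13-new} and Lemma~\ref{lemma-k-values} with $\varepsilon_2=1/\sqrt2$ and $\varepsilon$ from \eqref{def_epsandeps2} absorb $\frac{\nu}{\varepsilon_2}(\frac14+\varepsilon)\|\Delta\tilde{\bf e}^n\|^2$ for $k\ge4$, leaving a small margin $\nu\gamma\|\Delta\tilde{\bf e}\|^2$ with $\gamma>0$.
\item Every remaining right-hand term is bounded by Young's inequality against $\nu\gamma\|\Delta\tilde{\bf e}^{n+1}\|^2$ and $\nu\|\Delta\bar{\bf e}^{n}\|^2$. Each use produces a factor $1/\nu$, and this is exactly where the negative powers of $\nu$ enter $C_{5.3}$.
\end{itemize}

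\emph{Convection term (main obstacle).} I will split the convection difference as
\[
\hat{\bf e}^n\cdot\nabla\hat{\bf u}^n+\hat{\bf u}(t^n)\cdot\nabla\hat{\bf e}^n .
\]
These pieces are bounded with \eqref{Teman_estimate}, \eqref{convection_estimate} and Poincaré \eqref{Sobole-inequality}, using the $H^2$ regularity of the exact solution and the induction bound on $\|\nabla\hat{\bf e}\|$. The quadratic part $\hat{\bf e}\cdot\nabla\hat{\bf e}$ is handled in the spirit of Lemma~\ref{lemma5.13-new}: the Ladyzhenskaya-type estimate \eqref{convection_estimate} is applied with parameter $\varepsilon\sim\nu\gamma$. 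This gives terms of size $\nu^{-3}\|\nabla\bar{\bf e}\|^4$, which the induction hypothesis reduces to $\nu^{-3}C\delta t^4\|\nabla\bar{\bf e}\|^2$, a Gronwall-friendly term. Tracking these powers of $\nu$ and keeping all absorbed coefficients below $\gamma\nu$ is the delicate bookkeeping step.

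\emph{Closing the argument.} After summing in $n$, I will apply the discrete Gronwall inequality (Lemma~2 of \cite{HuangShen2023}) with $\alpha\sim C\nu^{-3}$ to obtain the bound for $\|\nabla\bar{\bf e}\|^2+\nu\delta t\sum\|\Delta\bar{\bf e}\|^2$. The pressure sum then follows as in \eqref{def_C3.7}, with the bound $\frac{\delta t}{\nu}\sum\|\nabla e_p\|^2$.

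To close the induction on $\xi$, I will subtract the continuous SAV identity from \eqref{HSscheme-c}, as in \cite{HuangShen2023}, and use the new $H^1$ and $H^2$ error bounds to show $|r^{n+1}-r(t^{n+1})|\lesssim\delta t^2$. This gives $|1-\xi^{n+1}|\le C_0\delta t$, with $C_0$ fixed beforehand as a function of $C_{5.3}$. Finally, the estimates for ${\bf e}$ and $\Delta{\bf e}$ follow from those for $\bar{\bf e}$ via ${\bf e}={\bf \bar e}+(\eta-1)\bar{\bf u}$.
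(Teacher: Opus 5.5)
Your overall architecture is the paper's: induction on $|1-\xi^i|\le C_0\delta t$, testing the error equation with $-\Delta\tilde{\bf e}^{i+1}$, the Stokes-pressure splitting with $\varepsilon_2=1/\sqrt2$ and Lemma~\ref{lemma-k-values}, discrete Gronwall, a GSAV consistency step, and the conversion ${\bf e}=\bar{\bf e}+(\eta-1)\bar{\bf u}$. The one structural difference is in how you control the numerical solution. You place the error bound itself in the induction hypothesis and split off a quadratic term $\hat{\bf e}\cdot\nabla\hat{\bf e}$. The paper instead uses Step~1: it tests the scheme itself with $-\Delta\tilde{\bf u}^{i+1}$, using $M_T$ from Theorem~\ref{thm:HS2023_Thm6} and Lemma~\ref{lemma5.13-new}, to obtain the a priori bound $\tilde C_1$ in \eqref{5.24-new}. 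Your bootstrap is viable and avoids the nested Gronwall of Step~1, whose output $C_1$ sits in the exponent of $C_2$. The price is that every bound on the numerical solution now depends on the induction constant, and this is where your closing step fails.

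The gap is in closing the $\xi$-induction.
\begin{itemize}
\item \emph{The claimed rate is false.} The bound $|r^{n+1}-r(t^{n+1})|\lesssim\delta t^2$ does not hold in general. The update \eqref{HSscheme-c} is backward Euler in $r$, and the summed residuals $\sum_i T^i$ from \eqref{5.54} are only $O(\delta t)$. Accordingly the paper obtains only the first-order bound \eqref{5.65-new}.
\item \emph{With the true rate, your constant choice is circular.} First order suffices, because $|1-\eta|=(1-\xi)^2$. But then ``$C_0$ fixed beforehand as a function of $C_{5.3}$'' cannot work, since $C_{5.3}$ itself grows like $C_0^4$: the bound $|\eta^i-1|\le C_0^2\delta t^2$ feeds a $C_0^4\delta t^4$ source into the Gronwall step through $\hat{\bf e}-\tilde{\bf e}$.
\item \emph{The missing idea.} Keep the error bound in the factored form $C_2(1+C_0^4)\delta t^4$ with $C_2$ free of $C_0$. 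Then derive $|1-\xi^{n+1}|\le C_5\delta t\,(1+\delta t\sqrt{1+C_0^4})$ with $C_5$ independent of $C_0$. The restriction $\delta t\le 1/(1+2C_0^2)$ gives $\delta t\sqrt{1+C_0^4}\le 1$, so the choice $C_0=2C_5$ closes the induction, as in \eqref{5.68-new}--\eqref{final_goal}. Only afterwards is $C_{5.3}$ fixed, as a function of $C_0$.
\item \emph{What your bootstrap must additionally check.} Under that step restriction, $C_{5.3}\delta t^4$ must stay bounded by a $C_0$-free constant; it controls $\|\nabla\bar{\bf u}^j\|$ and the coefficient of the quadratic term. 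So must $\sqrt{C_{5.3}}\,\delta t$, which multiplies $\delta t$ in the bound for $\|\nabla\bar{\bf e}^{n+1}\|$. The paper gets this for free, because $\tilde C_1$ uses only the ranges of $\xi,\eta$ in Lemma~\ref{lemma_xi_eta}, not the value of $C_0$.
\end{itemize}

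Two smaller repairs are also needed.
\begin{itemize}
\item With the $\varepsilon$ of \eqref{def_epsandeps2}, the condition of Lemma~\ref{lemma-k-values} holds with equality at $k=4$. There is therefore no margin $\gamma>0$, and you must take $\varepsilon$ strictly smaller.
\item Your induction hypothesis controls only $\|\nabla\bar{\bf e}^j\|$, so it does not bound $\|\Delta\bar{\bf u}^i\|$ pointwise as you claim. Either add $\nu\delta t\sum_j\|\Delta\bar{\bf e}^j\|^2\le C\delta t^4$ to the hypothesis, or absorb the resulting $C_0^4\delta t^4\|\Delta\bar{\bf e}\|^2$ terms into the left-hand side.
\end{itemize}
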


\begin{proof}
\mbox{}\\
The proof proceeds by induction on $n$, establishing the bound
$|1-\xi^i| \le C_0 \delta t$ for all $i\le \frac{T}{\delta t}$, where the
values of $C_0$ and $\delta t$ are determined in \eqref{def_C0} and
\eqref{5.71-new}, respectively. We assume $C_0>1$ and
$|1 - \xi^i| \le C_0 \delta t$ for all $i \le n$ (the base case $n=1$
follows from the first-order initialization \cite{guermond2003new}, and
the values of $C_0$ and $\delta t$ are chosen below to make this
assumption hold for $i=1$); the goal is to prove
$|1-\xi^{n+1}|\le C_0 \delta t$, which is achieved in \eqref{final_goal}
at the end of Step~3. The estimate \eqref{5.3-new} is a byproduct of
this induction.

Using $|1-\xi^{i}| \le C_0\,\delta t$, 
$\forall i\le n$, $C_0>1$, 
$\eta^{i} \;=\; 1-(1-\xi^{i})^{2}$, 
and the assumption $\delta t \le \frac{1}{1+2C_0^2}< \frac{1}{2C_0^2}$, 
we obtain from Lemma\,\ref{lemma_xi_eta} that 
\begin{align}
\tfrac{1}{2} < \xi^{i} < \tfrac{3}{2},  \quad \forall i\le n, 
\label{5.8-new}
\end{align}
and 
\begin{align}
\tfrac34 < \eta^{i} \le 1, \quad \forall i\le n.
\label{5.9-new}
\end{align}

\vskip 0.2cm

\noindent{\bf Step 1: finding upper bounds of $\|\nabla \bar{\bf u}^i\|$ and $\| \nabla{\bf u}^i\|$ for all $i\le n$}.
To establish \eqref{5.24-new} and \eqref{5.25-new}, we take the inner product of
\eqref{HSscheme-a} at $n=i+1$ with $-\Delta \tilde{\bf u}^{\,i+1}$, yielding
\begin{align}
 &\left( \frac{ (2k+1)\,\mathbf{\bar{u}}^{i+1} - 4k\,\mathbf{\bar{u}}^{i} + (2k-1)\,\mathbf{\bar{u}}^{i-1} } {2\delta t}, -\Delta  \tilde{\bf u}^{i+1} \right)
+ 2\delta t \nu \left((\Delta(k\bar{\mathbf{u}}^{i+1} - (k-1)\bar{\mathbf{u}}^i), \Delta \tilde{\mathbf{u}}^{i+1}\right)
\notag\\
 &=
 ( \hat{\mathbf{u}}^{\,i} \cdot \nabla \hat{\mathbf{u}}^{\,i}, \Delta  \tilde{\bf u}^{i+1}) 
+ ( \nabla \hat{p}^{\,i}, \Delta  \tilde{\bf u}^{i+1})
+ ( \mathbf{f}^{\,i+k}, -\Delta  \tilde{\bf u}^{i+1}).
\label{cat0090}
\end{align}
The analysis of the terms appearing in the resulting expression is presented below.
The time difference term becomes, according to Lemma\,\ref{lemma:algebraic_identity}, 
\begin{align}
&\left((2k+1)\bar{\mathbf{u}}^{i+1} - 4k\bar{\mathbf{u}}^i + (2k-1)\bar{\mathbf{u}}^{i-1}, -\Delta \tilde{\mathbf{u}}^{i+1}\right)\notag\\
=&\; A(\|\nabla\bar{\mathbf{u}}^{i+1}\|^2 - \|\nabla\bar{\mathbf{u}}^i\|^2) + \|B\nabla\bar{\mathbf{u}}^{i+1} - D\nabla\bar{\mathbf{u}}^i\|^2
- \|B\nabla\bar{\mathbf{u}}^i - D\nabla\bar{\mathbf{u}}^{i-1}\|^2 \notag\\
& + E\|\nabla\bar{\mathbf{u}}^{i+1} - \nabla\bar{\mathbf{u}}^i\|^2 
- F\|\nabla\bar{\mathbf{u}}^i - \nabla\bar{\mathbf{u}}^{i-1}\|^2 + G\|\nabla\bar{\mathbf{u}}^{i+1} - 2\nabla\bar{\mathbf{u}}^i + \nabla\bar{\mathbf{u}}^{i-1}\|^2. 
\label{5.11-new}
\end{align}
Due to the identity 
\begin{equation}
(ka-(k-1)b)\cdot ((k+1)a-kb)
= \frac{k-1}{k} ((k+1)a-kb)^2 + \frac{1}{k}a^2
+ \frac{1}{2} ( a^2 - b^2 + (a-b)^2),
\end{equation}
the viscous term with the general $\nu$ becomes
\begin{align}
\nu \left((\Delta(k\bar{\mathbf{u}}^{i+1} - (k-1)\bar{\mathbf{u}}^i), \Delta \tilde{\mathbf{u}}^{i+1}\right)
&= \nu \frac{k-1}{k} \|\Delta \tilde{\mathbf{u}}^{i+1}\|^2 + \frac{\nu}{k} \|\Delta\bar{\mathbf{u}}^{i+1}\|^2 \notag\\ 
&+ \frac{\nu}{2} \left(\|\Delta\bar{\mathbf{u}}^{i+1}\|^2 - \|\Delta\bar{\mathbf{u}}^i\|^2 + \|\Delta\bar{\mathbf{u}}^{i+1} - \Delta\bar{\mathbf{u}}^i\|^2\right).
\label{5.12-new}
\end{align}

Note that the conditions of Theorem~\ref{thm:HS2023_Thm6} are satisfied. Therefore,
$\|{\bf u}^{j}\|\le M_T$ and $r^{j}\ge 0$ for all
$0\le j \le T/\delta t$. Moreover, since $3/4 \le \eta^i < 1$ by
\eqref{5.9-new} and $\bar{\bf u}^i={\bf u}^i/\eta^i$, we obtain
\begin{align}
\|\bar{\bf u}^i\|
\le \frac{\|{\bf u}^i\|}{\eta^i}
\le \frac{4}{3}\|{\bf u}^i\|
\le 2M_T.
\label{ubar_upperbound}
\end{align}
Then, by Lemma~\ref{lemma5.13-new}, the following inequality for  the convection term follows where $\varepsilon$ is replaced by $4\varepsilon_1$:
\begin{align}
|\bigl(\widehat {\bf u}^{\,i}\!\cdot\!\nabla \widehat {\bf u}^{i},\;\Delta\tilde {\bf u}^{\,i+1}\bigr)|
\le 
4\varepsilon_1 \big( \|\Delta\bar {\bf u}^i\|^2
+ \|\Delta\bar {\bf u}^{\,i-1}\|^2
+ \|\Delta\tilde {\bf u}^{\,i+1}\|^2 \big)
+  \frac{4C^3(k+1)^6 M_T^2}{\varepsilon_1^3} \big(\|\nabla\bar {\bf u}^i\|^4+\|\nabla\bar {\bf u}^{\,i-1}\|^4\big).
\label{5.13-new-use}
\end{align}

For the pressure equation, taking $q\in H^1(\Omega)$ in
\eqref{HSscheme-b}, using the identity \eqref{3.10} together with
$\nabla\times\nabla\times\bar{\bf u}^i = -\Delta\bar{\bf u}^i + \nabla(\nabla\cdot\bar{\bf u}^i)$
and following the derivation of \eqref{3.11}--\eqref{3.12-new} for the
general viscosity~$\nu$, we obtain
\begin{align}
(\nabla p^i, \nabla q) = (f^i - \bar{\mathbf{u}}^i \cdot \nabla\bar{\mathbf{u}}^i, \nabla q) + \nu (\nabla p_s(\bar{\mathbf{u}}^i), \nabla q).
\label{5.15-new}
\end{align}
With $\hat{f}^i = (k+1)f^i - kf^{i-1}$, we have:
\begin{align}
(\nabla \hat{p}^i, \nabla q) = \left(\hat{f}^i - (k+1)\bar{\mathbf{u}}^i \cdot \nabla\bar{\mathbf{u}}^i + k\bar{\mathbf{u}}^{i-1} \cdot \nabla\bar{\mathbf{u}}^{i-1}, \nabla q\right) + \nu (\nabla p_s(\tilde{\mathbf{u}}^i), \nabla q).
\label{5.16-new}
\end{align}
Letting $q=\hat{p}^i$ in \eqref{5.16-new} leads to an upper bound for $\nabla \hat{p}^i$:
\begin{align}
\|\nabla \hat{p}^i\| \leq \left\| \hat{f}^i - (k+1)\bar{\mathbf{u}}^i \cdot \nabla\bar{\mathbf{u}}^i + k\bar{\mathbf{u}}^{i-1} \cdot \nabla\bar{\mathbf{u}}^{i-1} \right\| + \nu \|\nabla p_s(\tilde{\mathbf{u}}^i)\|.
\label{5.17-new}
\end{align}
Note the first term on the right of \eqref{5.17-new} can be bounded as:
\begin{align}
&\left\| \hat{f}^i - (k+1)\bar{\mathbf{u}}^i \cdot \nabla\bar{\mathbf{u}}^i + k\bar{\mathbf{u}}^{i-1} \cdot \nabla\bar{\mathbf{u}}^{i-1} \right\|^2\notag\\
\leq &\, 3\|\hat{f}^i\|^2 + 3(k+1)^2 \|\bar{\mathbf{u}}^i \cdot \nabla\bar{\mathbf{u}}^i\|^2 + 3k^2 \|\bar{\mathbf{u}}^{i-1} \cdot \nabla\bar{\mathbf{u}}^{i-1}\|^2 \notag \\
\leq &\, 3\|\hat{f}^i\|^2 + 3(k+1)^2 \|\bar{\mathbf{u}}^i\| \|\nabla\bar{\mathbf{u}}^i\|^2 \|\nabla\bar{\mathbf{u}}^i\|_1 + 3k^2 \|\bar{\mathbf{u}}^{i-1}\| \|\nabla\bar{\mathbf{u}}^{i-1}\|^2 \|\nabla\bar{\mathbf{u}}^{i-1}\|_1 \notag \\
\leq &\, 3\|\hat{f}^i\|^2 
+ \frac{9(k+1)^4}{\varepsilon_1'} M_T^2 \left(\|\nabla\bar{\mathbf{u}}^i\|^4 + \|\nabla\bar{\mathbf{u}}^{i-1}\|^4\right)
+ \varepsilon_1' \left(\|\Delta\bar{\mathbf{u}}^i\|^2 + \|\Delta\bar{\mathbf{u}}^{i-1}\|^2\right), 
\label{5.18-new}
\end{align}
where 
$\varepsilon_1'>0$ is a control parameter given by  Young's inequality. 
In the last step, the estimates $ \|\bar{\mathbf{u}}^i\|,  \|\bar{\mathbf{u}}^{i-1}\|\le 2 M_T$ are used, which are derived in \eqref{ubar_upperbound}.
Combining \eqref{5.17-new} and \eqref{5.18-new} via Cauchy--Schwarz and
Young's inequality (with parameters $\varepsilon_1,\varepsilon_2>0$) yields
\begin{align}
\big( \nabla \hat{p}^i, \Delta \tilde{\bf u}^{i+1}\big) 
&\le \| \Delta \tilde{\bf u}^{i+1} \| \cdot
\Big( \| \hat{f}^i
- (k+1)\bar{\bf u}^{i}\cdot\nabla\bar{\bf u}^{i}
+ k \bar{\bf u}^{i-1}\cdot\nabla\bar{\bf u}^{i-1} \|
+ \nu \|\nabla p_s(\tilde{\bf u}^i) \| \Big) \notag \\
&\le \frac{1}{4\varepsilon_1} \| \hat{f}^i
- (k+1)\bar{\bf u}^{i}\cdot\nabla\bar{\bf u}^{i}
+ k \bar{\bf u}^{i-1}\cdot\nabla\bar{\bf u}^{i-1} \|^2
+ \varepsilon_1 \| \Delta \tilde{\bf u}^{i+1} \| ^2
\notag\\
& \quad + \frac{\nu}{2\varepsilon_2} \|\nabla p_s(\tilde{\bf u}^i) \|^2
+ \frac{\nu\varepsilon_2}{2}  \| \Delta \tilde{\bf u}^{i+1} \| ^2 \notag\\
& \le \frac{3}{4\varepsilon_1} \| \hat{f}^i\|^2 
+ \frac{9(k+1)^4}{4\varepsilon_1\varepsilon_1'} 
M^2_T ( \| \nabla\bar{\bf u}^{i} \|^4 
  + \| \nabla\bar{\bf u}^{i-1} \|^4 ) 
  + \frac{\varepsilon_1'}{4\varepsilon_1} 
  (\| \Delta\bar{\bf u}^{i} \|^2
   + \| \Delta\bar{\bf u}^{i-1} \|^2  ) 
+ \varepsilon_1 \| \Delta \tilde{\bf u}^{i+1} \| ^2
\notag\\
& \quad + \frac{\nu}{2\varepsilon_2} \|\nabla p_s(\tilde{\bf u}^i) \|^2
+ \frac{\nu\varepsilon_2}{2}  \| \Delta \tilde{\bf u}^{i+1} \| ^2.
\end{align}
Here, one choose $\varepsilon_1'=4\varepsilon_1^2$. Using  the estimate 
$\|\nabla p_s(\tilde{\bf u}^i) \|^2
\le (\frac{1}{2}+\varepsilon_1) \|\Delta\tilde {\bf u}^i\|^2 + C_S(\varepsilon_1) \|\nabla \tilde{\bf  u}^i\|^2$ based on \eqref{lemma4-Liu}, one attains  
\begin{align}
\big( \nabla \hat{p}^i, \Delta \tilde{\bf u}^{i+1}\big) 
&\le 
C_{100}(\varepsilon_1) \cdot \Big(  \|\hat{f}^i\|^2 + M_T^2 ( \|\nabla\bar{\bf u}^i\|^4 + \|\nabla\bar{\bf u}^{i-1}\|^4 ) \Big)
+ \varepsilon_1 (\|\Delta\bar{\bf u}^i\|^2 + \|\Delta\bar{\bf u}^{i-1}\|^2 + \| \Delta \tilde{\bf u}^{i+1} \| ^2)\notag\\
& \quad + \frac{\nu}{2\varepsilon_2}\left( \frac{1}{2} + \varepsilon_1\right) \|\Delta \tilde{\bf u}^i \|^2
+ \frac{\nu}{\varepsilon_2} C_{100}(\varepsilon_1) \|\nabla\tilde{\bf u}^i \|^2
+ \frac{\nu\varepsilon_2}{2}  \| \Delta \tilde{\bf u}^{i+1} \| ^2,
\label{5.19-new}
\end{align}
where   
\begin{align}
C_{100}(\varepsilon_1)=\max\left\{\frac{3}{4\varepsilon_1}, \frac{9 (k+1)^4}{16 \varepsilon_1^3}, \frac{C}{2\varepsilon_1^3} \right\}.
\label{cat0026}
\end{align}
Note that the value of $C_{100}(\varepsilon_1)$ will continue to change until it is finalized in \eqref{def_Ceps}.
The inner product between the external force and $- \Delta \widetilde{\mathbf{u}}^{\,i+1}$ is changed to 
\begin{align}
\bigl( \mathbf{f}^{i+k}, - \Delta \widetilde{\mathbf{u}}^{\,i+1} \bigr)
\le 
\frac{1}{4\varepsilon_1} \|\mathbf{f}^{\,i+k}\|^{2}
+ 
\varepsilon_1\,\|\Delta \widetilde{\mathbf{u}}^{\,i+1}\|^{2}.
\label{5.20-new}
\end{align}
Inserting \eqref{5.11-new}, \eqref{5.12-new}, \eqref{5.13-new-use}, \eqref{5.19-new}, and \eqref{5.20-new} to \eqref{cat0090},  multiplying $2\delta t$, dropping $\delta t\nu \|\Delta \bar{\bf u}^{i+1} - \Delta \bar{\bf u}^i \|^2$ and the G-term in \eqref{5.11-new} yields
\begin{align}
&A(\|\nabla\bar{\mathbf{u}}^{i+1}\|^2 - \|\nabla\bar{\mathbf{u}}^i\|^2) + \|B\nabla\bar{\mathbf{u}}^{i+1} - D\nabla\bar{\mathbf{u}}^i\|^2 
- \|B\nabla\bar{\mathbf{u}}^i - D\nabla\bar{\mathbf{u}}^{i-1}\|^2+ 
E\|\nabla\bar{\mathbf{u}}^{i+1} - \nabla\bar{\mathbf{u}}^i\|^2 \notag\\
&- F\|\nabla\bar{\mathbf{u}}^i - \nabla\bar{\mathbf{u}}^{i-1}\|^2 
+ 2\nu\delta t \frac{k-1}{k} \|\Delta\tilde{\mathbf{u}}^{i+1}\|^2 
+ \frac{2\nu\delta t}{k} 
\|\Delta\bar{\mathbf{u}}^{i+1}\|^2 
+ \nu\delta t \left(\|\Delta\bar{\mathbf{u}}^{i+1}\|^2 - \|\Delta\bar{\mathbf{u}}^i\|^2\right) \notag \\
\leq & C_{100}(\varepsilon_1)M_T^2 \delta t 
\left(\|\nabla\bar{\mathbf{u}}^i\|^4 + \|\nabla\bar{\mathbf{u}}^{i-1}\|^4\right) + \frac{\nu}{\varepsilon_2}\left(\frac{1}{2} + \varepsilon_1\right) \delta t \|\Delta\tilde{\mathbf{u}}^i\|^2 + \varepsilon_2\nu\delta t \|\Delta\tilde{\mathbf{u}}^{i+1}\|^2 \notag \\
&+ 4\varepsilon_1\delta t 
\left(1.5 \|\Delta\tilde{\mathbf{u}}^{i+1}\|^2 
 + \|\Delta\bar{\mathbf{u}}^i\|^2 + \|\Delta\bar{\mathbf{u}}^{i-1}\|^2\right)+ C_{100}(\varepsilon_1) \frac{\nu}{\varepsilon_2} \delta t \|\nabla\tilde{\mathbf{u}}^i\|^2 + C_{100}(\varepsilon_1)\delta t \left(\|f^{i+k}\|^2 + \|\hat{f}^i\|^2\right).
\label{5.21-new}
\end{align}
Here 
\begin{align}
C_{100}(\varepsilon_1)=\max\left\{ \frac{3}{4\varepsilon_1}, \frac{9 (k+1)^4}{16 \varepsilon_1^3}, \frac{C}{2\varepsilon_1^3} \right\} +  \frac{4C^3(k+1)^6 M_T^2}{\varepsilon_1^3}.
\label{cat0027}
\end{align}
Summation of \eqref{5.21-new} for $i=1,\cdots,  m-1$, for any $m\le n$,  yields
\begin{align}
& \quad A \|\nabla \bar{\bf u}^m\|^2 
+ \| B \nabla \bar{\bf u}^m - D \nabla\bar{\bf u}^{m-1}\|^2 
+ E \|\nabla \bar{\bf u}^m - \nabla \bar{\bf u}^{m-1}\|^2 
 + \sum^{m-2}_{i=2} (E-F) \|\nabla \bar{\bf u}^{i+1} - \nabla \bar{\bf u}^{i}\|^2 \notag \\
&  + \sum_{i=2}^{m} \|\Delta \tilde{\bf u}^i\|^2 \delta t \Big( \frac{2\nu (k-1)}{k} - \nu \varepsilon_2 -  6\varepsilon_1  - \frac{\nu}{\varepsilon_2} (\frac{1}{2}+\varepsilon_1) \Big) 
+ \sum_{i=2}^{m} \|\Delta \bar{\bf u}^i\|^2 \delta t \Big( \tfrac{2\nu}{k} - 8\varepsilon_1 \Big) 
 \notag \\
\leq & \quad \sum_{i=1}^{m-1} C_{100}(\varepsilon_1) M_T^2 \,\delta t \|\nabla \bar{\bf u}^i\|^4 
+ \sum_{i=1}^{m-1} C_{100}(\varepsilon_1) \tfrac{\nu}{\varepsilon_2}\,\delta t \|\nabla \tilde{\bf u}^{i}\|^2 + \sum_{i=1}^{m-1} C_{100}(\varepsilon_1) \,\delta t \Big(\|f^{i+k}\|^2 + \|\hat{f}^{i}\|^2\Big) 
+ M_0,
\label{cat0023} \\
\leq & \quad C_{100}(\varepsilon_1) M_T^2 \,\delta t \sum_{i=1}^{m-1}  \|\nabla \bar{\bf u}^i\|^4 
+ C_{100}(\varepsilon_1) \frac{\nu}{\varepsilon_2}\,\delta t \sum_{i=1}^{m-1}  \|\nabla \bar{\bf u}^{i}\|^2 + \sum_{i=1}^{m-1} C_{100}(\varepsilon_1) \,\delta t \Big(\|f^{i+k}\|^2 + \|\hat{f}^{i}\|^2\Big) 
+ M_0,
\label{cat0024} 
\end{align}
where $M_0$ contains all other terms at the initial time steps $i=0,1$ and  $C_{100}(\varepsilon_1)$ is multiplied by $4(k+1)^2$ from \eqref{cat0023} to \eqref{cat0024} according to the relation 
$\tilde{\bf u}^{i+1}=(k+1) {\bar{\bf u}}^{i+1} - k {\bar{\bf u}}^i$.
For stability, the following conditions are forced to hold:
\begin{align}
2\nu\frac{k-1}{k} - \nu\varepsilon_2 - 6 \varepsilon_1 - \frac{\nu}{\varepsilon_2} \left(\frac{1}{2} + \varepsilon_1\right) \ge 0
\quad \text{ and }
\frac{2\nu}{k} -8\varepsilon_1\ge \frac{\nu}{k},
\label{5.22-new}
\end{align}
where the first is from the coefficient of 
$\| \Delta\tilde{\bf u}^{i}\|^2$, and the second from  the coefficient of $\|\Delta\bar{\mathbf{u}}^{i}\|^2$, both on the left of \eqref{cat0023}. 
The first inequality in \eqref{5.22-new} is equivalent to
$\frac{k-1}{k} \ge \frac{\varepsilon_2}{2} + 
\frac{1}{4\varepsilon_2} + \varepsilon_1
 \left( \frac{3}{\nu} + \frac{1}{2\varepsilon_2} \right)$. Applying Lemma\,\ref{lemma-k-values}, we find that the condition $k\ge 4$ holds for this inequality when 
$\varepsilon_2=\frac{1}{\sqrt{2}}$ and 
 $0<\varepsilon_1\le \frac{\frac{3}{4} - \frac{1}{\sqrt{2}}}{\frac{3}{\nu} + \frac{1}{\sqrt{2}}}$. Combining this with the constraint imposed by the second inequality in \eqref{5.22-new}, we further require $0<\varepsilon_1  \le \min\left( 
 \frac{\frac{3}{4} - \frac{1}{\sqrt{2}}}{\frac{3}{\nu} + \frac{1}{\sqrt{2}}}, \frac{\nu}{32} \right)$ in order to ensure that $k\ge 4$. Because $0<\nu<\nu_{\max}$, we choose 
\begin{align}
\varepsilon_1=\nu \cdot D_{\nu_{\max}}
\quad
\text{ where }
D_{\nu_{\max}}= \min\left( 
 \frac{\frac{3}{4} - \frac{1}{\sqrt{2}}}{3 + \frac{\nu_{\max}}{\sqrt{2}}}, \frac{1}{32} \right).
\label{eps1_def}
\end{align}

With these choices of $k$ and $\varepsilon_1$, and taking
$A = \frac{1}{2k}$ together with the condition $E > F$ as stated in
Lemma\,\ref{lemma:algebraic_identity}, we discard certain nonnegative
terms on the left-hand side of \eqref{cat0024} and arrive at the
following estimate, in which the constant $C_{100}(\varepsilon_1)$ is
multiplied by $2k$,
\begin{align}
\|\nabla\bar{\mathbf{u}}^m\|^2 + \nu\delta t \sum_{i=1}^m \|\Delta\bar{\mathbf{u}}^i\|^2 \leq & C_{100}(\varepsilon_1) M_T^2\delta t \sum_{i=1}^{m-1} \|\nabla\bar{\mathbf{u}}^i\|^4 
+ C_{100}(\varepsilon_1) \nu\delta t \sum_{i=1}^{m-1} \|\nabla\bar{\mathbf{u}}^i\|^2 + C_{100}(\varepsilon_1) T C_f^2 + M_0,
\label{5.23-new}
\end{align}
where $C_f=\sup_{t\in [0,T]} \|{\bf f}(\cdot, t)\|$.
Using the estimate
$\nu \delta t \sum_{i=0}^{m-1} \|\nabla \bar{\bf u}^i \|^2 \le 2 M_T$ (derived from the inequality $\nu \delta t \sum_{i=0}^{m-1} \xi^i \|\nabla \bar{\bf u}^i \|^2 \le M_T$ from Theorem\,6 in \cite{HuangShen2023} and $0.5<\xi^i<2$ from \eqref{5.8-new}) reduces it to, with $C_{100}(\varepsilon_1)$ multiplied by 2 from its previous value,
\begin{align}
\|\nabla\bar{\mathbf{u}}^m\|^2 + \nu\delta t \sum_{i=1}^m \|\Delta\bar{\mathbf{u}}^i\|^2 \leq & C_{100}(\varepsilon_1) M_T^2\delta t \sum_{i=1}^{m-1} \|\nabla\bar{\mathbf{u}}^i\|^4 
+ C_{100}(\varepsilon_1) M_T^2 + C_{100}(\varepsilon_1) T C_f^2 + M_0.
\label{cat0025}
\end{align}
Applying the discrete Gronwall inequality (Lemma~2 of \cite{HuangShen2023}; see also the statement and proof of Theorem~\ref{Theorem 5-New}) gives rise to
\begin{align}
\|\nabla\bar{\mathbf{u}}^m\|^2 + \nu\delta t \sum_{i=1}^m \|\Delta\bar{\mathbf{u}}^i\|^2 \leq 
\exp\left( \frac{C_{100}(\varepsilon_1)M_T^3}{\nu} \right) 
\cdot ( C_{100}(\varepsilon_1)M_T + C_{100}(\varepsilon_1) TC_f^2 + M_0)
\triangleq \tilde{C}_1,
\quad \forall m\le n, 
\label{5.24-new}
\end{align}
where $\nu \delta t \sum_{i=0}^{m-1} \|\nabla \bar{\bf u}^i \|^2 \le 2 M_T$ is used again and the value of $C_{100}(\varepsilon_1)$ is doubled one more time.
Since ${\bf u}^i=\eta^i \bar{\bf u}^i$ and $|\eta^i| \le 1$ from Lemma\,\ref{lemma_xi_eta}, the above inequality yields the same upper bound for $\|\nabla\mathbf{u}^m\|^2$:
\begin{align}
\|\nabla\mathbf{u}^m\|^2 + \nu\delta t \sum_{i=1}^m \|\Delta\mathbf{u}^i\|^2 \le \tilde{C}_1,
\quad \forall m\le n.
\label{5.25-new}
\end{align}
From the above process, one obtains the final value of $C_{100}(\varepsilon_1)$ in \eqref{5.24-new} as 
\begin{align}
C_{100}(\varepsilon_1) = 16k(k+1)^2 \cdot \left( 
  \frac{64C^3(k+1)^6 M_T^2}{\varepsilon_1^3} 
+ \max\left\{ \frac{12}{\varepsilon_1}, 
\frac{9(k+1)^4}{\varepsilon_1^3}, \frac{8C}{\varepsilon_1^3}  \right\}
   \right),
   \label{def_Ceps}
\end{align}
where $C$ is the one used from \eqref{convection_estimate} to \eqref{lemma4-Liu}.
We then define a key constant:
\begin{align}
C_1 = 4(k+1)^2 \cdot  \max \{\tilde{C}_1,
 \max\limits_{t \in (0,T)} \|\mathbf{u}(t)\|_2^2 \},
\label{def_C1}
\end{align}
which ensures the boundedness in \eqref{5.50-new} is true. The appearance of $(k+1)^2$ is because of the definition of $\hat{\bf u}=(k+1){\bf u}^i -k {\bf u}^{i-1}$.

Due to the choice of $\varepsilon_1=\nu\cdot D_{\nu_{\max}}$ in \eqref{eps1_def}, we have $C_{100}(\varepsilon_1) = O(\frac{1}{\nu^3})$ when $\nu\to 0$.
Combined with the $\exp(C_{100}(\varepsilon_1) M_T^3/\nu)$ factor in \eqref{5.24-new}, this yields $C_1\sim \exp(\mathcal{O}(\nu^{-4}))$ as $\nu\to 0^+$, the first of the three nested exponentials traced in Remark~\ref{remark4.5}.

\vskip 0.2cm

{\bf Step 2: finding upper bound of $\|\nabla\bar{\bf e}^{n+1} \|$}. 
We begin with the error equation, obtained by subtracting the
exact-solution form of \eqref{HSscheme-a} (i.e., the same scheme applied
to the exact solution ${\bf u}(t^i)$) from \eqref{HSscheme-a} and
collecting the truncation residuals on the right-hand side:
\begin{align}
&(2k+1)\bar{\mathbf{e}}^{i+1}  -4k\bar{\mathbf{e}}^i
+ (2k-1) \bar{\mathbf{e}}^{i-1}
-2\nu\delta t \Delta (k\bar{\mathbf{e}}^{i+1} - (k-1)\bar{\mathbf{e}}^i) \notag\\
&+2\delta t \left( \hat{\mathbf{u}}^i\cdot\nabla\hat{\mathbf{u}}^i 
-\hat{\mathbf{u}}(t^i)\cdot\nabla\hat{\mathbf{u}}(t^i)\right) 
+ 2\delta t \nabla\hat{e}^i_p = 2\delta t P^i + 2\delta t Q^i + R^i + 2\delta t S^i,
\label{5.26-new}
\end{align}
where $\hat{e}^i_p\triangleq (k+1)e^i_p - k e^{i-1}_p$ and 
$\hat{\mathbf{u}}(t^i) = (k+1){\mathbf{u}}(t^i) - k{\mathbf{u}}(t^{i-1})$.
The truncation error terms are given by:
\begin{align}
P^i &= \nabla p(t^{i+k}) - \nabla ((k+1)p(t^i) - k p(t^{i-1})) \notag\\
&=(k+1) \int_{t^i}^{t^{i+k}}  (t^i-s) \nabla \frac{\partial^2 p}{\partial t^2} (s) ds
- k\int_{t^{i-1}}^{t^{i+k}} (t^{i-1}-s) \nabla\frac{\partial^2 p}{\partial t^2}(s) ds,
\label{5.27-new}
\\
Q^i &= -\Delta {\mathbf{u}}(t^{i+k}) + \Delta (k {\mathbf{u}}(t^{i+1}) - (k-1){\mathbf{u}}(t^{i})) \notag\\
&=-k \int_{t^{i+1}}^{t^{i+k}}  (t^{i+1}-s) \Delta \frac{\partial^2 {\mathbf{u}}}{\partial t^2} (s) ds
- (k-1) \int_{t^i}^{t^{i+k}} (t^{i}-s) \Delta \frac{\partial^2 {\mathbf{u}}}{\partial t^2}(s) ds,
\label{5.28-new}
\\
R^i &= -(2k+1) {\mathbf{u}}(t^{i+1}) + 4k {\mathbf{u}}(t^{i}) 
- (2k-1){\mathbf{u}}(t^{i-1})
+2\delta t {\mathbf{u}}_t(t^{i+k}) \notag\\
&=\frac{2k+1}{2} \int_{t^{i+1}}^{t^{i+k}}  (t^{i+1}-s)^2  \frac{\partial^3 {\mathbf{u}}}{\partial t^3} (s) ds
- (2k) \int_{t^i}^{t^{i+k}} (t^{i}-s)^2 
\frac{\partial^3 {\mathbf{u}}}{\partial t^3}(s) ds
\notag \\
&\quad + \frac{2k-1}{2} \int_{t^{i-1}}^{t^{i+k}} (t^{i-1}-s)^2 
\frac{\partial^3 {\mathbf{u}}}{\partial t^3}(s) ds,
\label{5.29-new}
\\
S^i &= {\mathbf{u}}(t^{i+k}) \cdot \nabla{\mathbf{u}}(t^{i+k})
-\hat{\mathbf{u}}(t^i)\cdot\nabla\hat{\mathbf{u}}(t^i)
\notag\\
&= {\mathbf{u}}(t^{i+k})\cdot\nabla\left({\mathbf{u}}(t^{i+k}) 
- \hat{\mathbf{u}}(t^i)\right)
- \left(\hat{\mathbf{u}}(t^i)-{\mathbf{u}}(t^{i+k})\right)\cdot\nabla\hat{\mathbf{u}}(t^i).
\label{5.30-new}
\end{align}
After taking the inner product of \eqref{5.26-new} with $-\Delta\tilde{\mathbf{e}}^{i+1}$ where $\tilde{\mathbf{e}}^{i+1} = (k+1)\bar{\mathbf{e}}^{i+1} - k\bar{\mathbf{e}}^i$, we analyze each term of the resulting expression below.

\vskip 0.2cm
{\bf Step 2.1}.  
The temporal difference term satisfies
\begin{align}
&\left((2k+1)\bar{\mathbf{e}}^{i+1} - 4k\bar{\mathbf{e}}^i + (2k-1)\bar{\mathbf{e}}^{i-1}, -\Delta \tilde{\mathbf{e}}^{i+1} \right) \notag\\
=&A\left(\|\nabla\bar{\mathbf{e}}^{i+1}\|^2 - \|\nabla\bar{\mathbf{e}}^{i}\|^2\right)
+\|B\nabla\bar{\mathbf{e}}^{i+1} - D\nabla\bar{\mathbf{e}}^i\|^2
- \|B\nabla\bar{\mathbf{e}}^{i} - D\nabla\bar{\mathbf{e}}^{i-1}\|^2
\notag\\
& + E\|\nabla\bar{\mathbf{e}}^{i+1} - \nabla\bar{\mathbf{e}}^i\|^2
- F\|\nabla\bar{\mathbf{e}}^{i} - \nabla\bar{\mathbf{e}}^{i-1}\|^2
+G \|\nabla\bar{\mathbf{e}}^{i+1} - 2\nabla\bar{\mathbf{e}}^i + \nabla\bar{\mathbf{e}}^{i-1}\|^2.
\label{5.31-new}
\end{align}

\vskip 0.2cm
{\bf Step 2.2}. 
The viscous term satisfies 
\begin{align}
&\left( -2\nu\delta t \Delta (k\bar{\mathbf{e}}^{i+1}-(k-1)\bar{\mathbf{e}}^i), -\Delta \tilde{\mathbf{e}}^{i+1}\right) \notag\\
=&\frac{2(k-1)\nu\delta t}{k} \|\Delta\hat{\mathbf{e}}^{i+1}\|^2
+ \frac{2\nu\delta t}{k} \|\Delta\bar{\mathbf{e}}^{i+1}\|^2
+ \nu\delta t \left(\|\Delta \bar{\mathbf{e}}^{i+1}\|^2
 - \|\Delta\bar{\mathbf{e}}^i\|^2
 + \|\Delta\bar{\mathbf{e}}^{i+1} - \Delta\bar{\mathbf{e}}^i\|^2\right).
\label{5.32-new}
\end{align}

\vskip 0.2cm
{\bf Step 2.3}. Next, we study the convection terms.
Decomposing
$\hat{\bf u}^i\cdot\nabla\hat{\bf u}^i - \hat{\bf u}(t^i)\cdot\nabla\hat{\bf u}(t^i)
= \hat{\bf e}^i\cdot\nabla\hat{\bf u}^i + \hat{\bf u}(t^i)\cdot\nabla\hat{\bf e}^i$
and applying the convection estimate \eqref{Teman_estimate} together with
elliptic regularity \eqref{elliptic_regularity}, one obtains
\begin{align}
|\bigl(
&\hat{\bf u}^{i} \cdot \nabla \hat{\bf u}^{i}
- \hat{\bf u}(t^{i})\cdot \nabla \hat{\bf u}(t^{i}),
- \Delta \tilde{\bf e}^{\,i+1}\bigr)| =
| \bigl(
\hat{\mathbf e}^{i} \cdot \nabla \hat{\mathbf u}^{i},
- \Delta \tilde{\mathbf e}^{\,i+1}
\bigr)
+
\bigl(
\hat{\mathbf u}(t^{i})\cdot \nabla \hat{\mathbf e}^{i},
- \Delta \tilde{\mathbf e}^{\,i+1}
\bigr) | \notag
\\
&\le
C \|\nabla \hat{\bf e}^{i}\|
\|\hat{\bf u}^{i}\|_{2}
\|\Delta \tilde{\bf e}^{\,i+1}\|
+
C \|\hat{\bf u}(t^{i})\|_{2}
\|\nabla \hat{\bf e}^{i}\|
\|\Delta \tilde{\bf e}^{\,i+1}\|
\label{cat0102}
\\
&\le
C^2 \|\nabla \hat{\bf e}^{i}\|
\|\Delta \hat{\bf u}^{i}\|
\|\Delta \tilde{\mathbf e}^{\,i+1}\|
+
C \|\hat{\bf u}(t^{i})\|_{2}
\|\nabla \hat{\bf e}^{i}\|
\|\Delta \tilde{\bf e}^{\,i+1}\|
\label{cat0103}
\\
&\le
C_{201} \|\nabla \hat{\bf e}^{i}\|
\|\Delta \hat{\bf u}^{i}\|
\|\Delta \tilde{\mathbf e}^{\,i+1}\|
+
C_{201} \|\hat{\bf u}(t^{i})\|_{2}
\|\nabla \hat{\bf e}^{i}\|
\|\Delta \tilde{\bf e}^{\,i+1}\|
\label{cat0104}
\\
&\le
\frac{C_{201}^2}{2\varepsilon_3} 
\|\nabla \hat{\bf e}^{i}\|^{2}
\cdot \left( \|\Delta \hat{\bf u}^{i}\|^{2}
+
\|\hat{\bf u}(t^{i})\|_2^{2}\right)
+
\varepsilon_3 \|\Delta \tilde{\mathbf e}^{\,i+1}\|_{2}^{2},
\label{5.34-new}
\end{align}
where the constant $C$ in \eqref{cat0102} arises from the basic inequalities
for the convection terms \eqref{Teman_estimate}, and 
the constant $C^2$ in \eqref{cat0103} appears due to an additional factor $C$
coming from the elliptic regularity estimate
$\|\hat{\bf u}^{i}\|_{2} \le C \|\Delta \hat{\bf u}^{i}\|$. 
In \eqref{cat0104}, 
\begin{align}
C_{201} = \max\{C^2,\, C\}.
\label{def_C201}
\end{align}
The upper bound in \eqref{5.34-new} follows from Young’s inequality with an arbitrary
$\varepsilon_3 > 0$.

Using the symbols listed in \eqref{notations}, 
we can derive that $\hat{\bf u}^i - \hat{\bf e}^i = \tilde{\bf u}^i - \tilde{\bf e}^i = \hat{\bf u}(t^i)$.
Thus, 
\begin{align}
\|\nabla \hat{\bf e}^{i}\|^2
&=
\|\nabla \hat{\bf u}^{i}
- \nabla \tilde{\bf u}^{i}
+ \nabla \tilde{\bf e}^{i}\|^2 \notag\\
&\le 
2 \| \nabla (\hat{\bf u}^i - \tilde{\bf u}^i) \|^2
+ 2 \|\nabla \tilde{\bf e}^i \|^2
\notag\\
&= 2 \| \nabla 
      [(k+1) ({\bf u}^i     - \bar{\bf u}^i) 
         -k  ({\bf u}^{i-1} - \bar{\bf u}^{i-1}) 
      ]   \|^2
+ 2 \|\nabla \tilde{\bf e}^i \|^2
\notag\\
&\le 
4 (k+1)^2 
\| \nabla  ({\bf u}^i - \bar{\bf u}^i)\|^2 
+4k^2 \| \nabla ({\bf u}^{i-1} - \bar{\bf u}^{i-1}) 
   \|^2
+ 2 \|\nabla \tilde{\bf e}^i \|^2.
\label{cat0105}
\end{align}
Applying the relations ${\bf u}^i - \bar{\bf u}^i = (\eta^i-1) \bar{\bf u}^i$ and $(1-\eta^i) = (1-\xi^i)^2$, together with the induction hypothesis $|1-\xi^i| \le C_0 \delta t$ for all $i\le n$, we simplify \eqref{cat0105} to
\begin{align}
\|\nabla \hat{\bf e}^{i}\|^2
\le 
4 (k+1)^2 C_0^4 \delta t^4 
\left( \| \nabla \bar{\bf u}^i\|^2 
+      \| \nabla \bar{\bf u}^{i-1} \|^2 \right) 
+ 2 \|\nabla \tilde{\bf e}^i \|^2.
\label{cat0106}
\end{align}
Using \eqref{5.24-new} and the definition of $C_1$ in \eqref{def_C1}, one attains 
$\|\nabla \bar{\bf u}^i\|^2,  \| \nabla \bar{\bf u}^{i-1} \|^2 \le C_1$ and thus
\begin{align}
\|\nabla \hat{\bf e}^{i}\|^2
\le 
8 (k+1)^2 C_1 C_0^4 \delta t^4 
+ 2 \|\nabla \tilde{\bf e}^i \|^2.
\label{5.35-new}
\end{align}
Inserting \eqref{5.35-new} to \eqref{5.34-new} leads to
\begin{align}
&|\bigl(
\hat{\bf u}^{i} \cdot \nabla \hat{\bf u}^{i}
- \hat{\bf u}(t^{i})\cdot \nabla \hat{\bf u}(t^{i}),
- \Delta \tilde{\bf e}^{\,i+1}
\bigr)| \notag \\
&\le \frac{C_{201}^2}{\varepsilon_3} 
\|\nabla \tilde{\bf e}^{i}\|^{2}
\cdot \left( \|\Delta \hat{\bf u}^{i}\|^{2}
+
\|\hat{\bf u}(t^{i})\|_2^{2}\right)
+
\varepsilon_3 \|\Delta \tilde{\bf e}^{\,i+1}\|_{2}^{2}
+ \frac{C_{201}^2}{\varepsilon_3} 4 (k+1)^2C_1 C_0^4 \delta t^4 
\cdot \left( \|\Delta \hat{\bf u}^{i}\|^{2}
+
\|\hat{\bf u}(t^{i})\|_2^{2}\right).
\label{5.36-new}
\end{align}

\vskip 0.2cm
{\bf Step 2.4}. 
For the pressure-error term, with
$\hat e_p^{i}\triangleq (k+1)e^i_p - k e^{i-1}_p$ and
$e^i_p\triangleq p^i-p(t^i)$, Cauchy--Schwarz directly yields:
\begin{align}
|\bigl(\nabla \hat e_p^{i},\,
- \Delta \tilde{\bf e}^{\,i+1}
\bigr)|
\le
\|\nabla \hat e_p^{i}\|
\,
\|\Delta \tilde{\bf e}^{\,i+1}\|.
\label{5.36}
\end{align}
To bound $\|\nabla\hat e^i_p\|$, we proceed as follows. Subtracting the
exact-solution pressure equation from the scheme \eqref{HSscheme-b} (with
indices shifted appropriately to account for the GSAV correction) gives,
for the unbarred and the BDF-combined error pressure,
\begin{align}
\left(\nabla e^i_p, \nabla q\right) &=
\left({\mathbf{u}}(t^i)\cdot\nabla{\mathbf{u}}(t^i)-\bar{\mathbf{u}}^i\cdot\nabla\bar{\mathbf{u}}^i,\nabla q\right)
+ \nu \left(\nabla p_s(\bar{\mathbf{e}}^i), \nabla q\right),
\label{5.37-new}
\\
\left(\nabla \hat{e}^i_p, \nabla q\right)
&= \left(
\begin{aligned}
&(k+1) {\mathbf{u}}(t^i)\cdot\nabla{\mathbf{u}}(t^i)
-(k+1)\bar{\mathbf{u}}^i\cdot\nabla\bar{\mathbf{u}}^i \\
&- k{\mathbf{u}}(t^{i-1})\cdot\nabla{\mathbf{u}}(t^{i-1})
+ k \bar{\mathbf{u}}^{i-1}\cdot\nabla\bar{\mathbf{u}}^{i-1}
\end{aligned},
\nabla q \right)
+ \nu \left(\nabla p_s(\tilde{\mathbf{e}}^i), \nabla q\right),
\label{5.38-new}
\\
\|\nabla\hat{e}^i_p\|
&\le (k+1) \|{\mathbf{u}}(t^i)\cdot\nabla{\mathbf{u}}(t^i) - \bar{\mathbf{u}}^i \cdot\nabla\bar{\mathbf{u}}^i\|
\notag\\
& \quad + k \|{\mathbf{u}}(t^{i-1})\cdot\nabla{\mathbf{u}}(t^{i-1})
- \bar{\mathbf{u}}^{i-1} \cdot\nabla\bar{\mathbf{u}}^{i-1}\|
+ \nu \| \nabla p_s(\tilde{\mathbf{e}}^i)\|,
\label{5.39-new}
\end{align}
and
\begin{align}
(k+1) \left[ {\bf{u}}(t^i)\cdot\nabla{\bf{u}}(t^i) - \bar{\bf{u}}^i \cdot\nabla\bar{\bf{u}}^i\right]
&= -(k+1) \left[ \bar{\bf{e}}^i\cdot\nabla\bar{\bf{u}}^i
  + {\bf{u}}(t^i) \cdot\nabla\bar{\bf{e}}^i \right],
\label{5.40-new}
  \\
(k+1)^2 \| {\bf{u}}(t^i)\cdot\nabla{\bf{u}}(t^i) - \bar{\bf{u}}^i \cdot\nabla\bar{\bf{u}}^i \|^2
&\le  2(k+1)^2 C^4 \| \nabla\bar{\bf{e}}^i \|^2 \|\Delta \bar{\bf{u}}^i\|^2 
  + 2(k+1)^2 \| {\bf{u}}(t^i)\|^2_2  
      \| \nabla\bar{\bf{e}}^i \|^2,
  \label{5.41-new}  
  \\
k^2 \| {\bf{u}}(t^{i-1})\cdot\nabla{\bf{u}}(t^{i-1}) 
- \bar{\bf{u}}^{i-1} \cdot\nabla\bar{\bf{u}}^{i-1} \|^2
&\le 2k^2C^4 \| \nabla\bar{\bf{e}}^{i-1} \|^2 
    \|\Delta \bar{\bf{u}}^{i-1} \|^2 
  + 2k^2 \| {\bf{u}}(t^{i-1})\|^2_2  
      \| \nabla\bar{\bf{e}}^{i-1} \|^2.
  \label{5.42-new}  
\end{align}
where the constant $C$ in \eqref{5.41-new} and \eqref{5.42-new} is from the Poincar\'{e} inequality and elliptic regularity mentioned at the beginning of Section\,\ref{sec_error_analysis}.
Combining \eqref{5.36} with \eqref{5.39-new}--\eqref{5.42-new}, applying
Cauchy--Schwarz, and using Young's inequality with parameters
$\varepsilon_3,\varepsilon_4>0$, we obtain
\begin{align}
&\big|\left( \nabla \hat{e}^i_p, -\Delta \tilde{\bf{e}}^{i+1} \right)\big|  \\
\le & \|\Delta\tilde{\bf{e}}^{i+1}\|
\bigg( (k+1) \|{\bf{u}}(t^i)\cdot\nabla{\bf{u}}(t^i)
           -\bar{\bf{u}}^i\cdot\nabla\bar{\bf{u}}^i \|
  + k \|{\bf{u}}(t^{i-1})\cdot\nabla{\bf{u}}(t^{i-1})
           -\bar{\bf{u}}^{i-1}\cdot\nabla\bar{\bf{u}}^{i-1} \| \bigg)    
\notag\\     
& + \nu \|\Delta\tilde{\bf{e}}^{i+1} \| 
     \cdot \|\nabla p_s(\tilde{\bf{e}}^i \|
\notag\\
\le &  \frac{1}{2\varepsilon_3} \bigg( 
   (k+1)^2 \|{\bf{u}}(t^i)\cdot\nabla{\bf{u}}(t^i)
    -\bar{\bf{u}}^i\cdot\nabla\bar{\bf{u}}^i \|^2
  + k^2 \|{\bf{u}}(t^{i-1})\cdot\nabla{\bf{u}}(t^{i-1})
     -\bar{\bf{u}}^{i-1}\cdot\nabla\bar{\bf{u}}^{i-1} \|^2 \bigg)       
\notag\\
& + \varepsilon_3 \|\Delta\tilde{\bf{e}}^{i+1} \|^2 
+ \frac{\nu}{2\varepsilon_4} \|\nabla p_s(\tilde{\bf{e}}^i)\|^2
+ \frac{\nu \varepsilon_4}{2} \|\Delta\tilde{\bf{e}}^{i+1} \|^2
\notag \\
\le & \frac{C_{202}}{\varepsilon_3} \|\nabla\bar{\bf{e}}^i\|^2 
\bigg( \|\Delta\bar{\bf{u}}^i \|^2 
      + \|{\bf{u}}(t^i)\|^2_2 \bigg) 
+ \frac{C_{202}}{\varepsilon_3}
  \|\nabla\bar{\bf{e}}^{i-1}\|^2 
\bigg( \|\Delta\bar{\bf{u}}^{i-1} \|^2 
      + \|{\bf{u}}(t^{i-1})\|^2_2 \bigg) 
\notag\\
& + \bigg( \varepsilon_3 + \frac{\nu\varepsilon_4}{2} \bigg)
    \|\Delta\tilde{\bf{e}}^{i+1}\|^2
    + \frac{\nu}{2\varepsilon_4} \bigg( \frac{1}{2} + \varepsilon_3 \bigg) \|\Delta\tilde{\bf{e}}^i \|^2
    +   \frac{\nu C}{2\varepsilon_4\varepsilon_3^3}  \|\nabla\tilde{\bf{e}}^i\|^2. 
\label{5.43-new}         
\end{align}
where 
\begin{align}
C_{202}=(k+1)^2 \max\{1, C^4 \},
\label{def_C202}
\end{align}
and  \eqref{lemma4-Liu} is used in the last step  but with $\varepsilon$ replaced by $\varepsilon_3$.

\vskip 0.2cm
{\bf Step 2.5}. 
Using \eqref{5.27-new}, 
\begin{align}
& \bigl| (P^i, -\Delta \tilde{\bf{e}}^{\,i+1}) \bigr|
\le \frac{1}{4\varepsilon_3} \|P^i\|^2
   + \varepsilon_3 \|\Delta \tilde{\bf{e}}^{\,i+1}\|^2
\notag\\
&\le \frac{2 (k+1)^2}{4\varepsilon_3}
\left\|
\int_{t^i}^{t^{i+k}} (t^i - s)\,
\nabla \frac{\partial^2 p}{\partial t^2}(s)\, ds
\right\|^2 
+ \frac{2 k^2}{4\varepsilon_3}
\left\|
\int_{t^{i-1}}^{t^{i+k}} (t^{i-1} - s)\,
\nabla \frac{\partial^2 p}{\partial t^2}(s)\, ds
\right\|^2 
+ \varepsilon_3 \|\Delta \tilde{\bf{e}}^{\,i+1}\|^2
\notag\\
&\le \frac{(k+1)^2}{2\varepsilon_3}
\frac{(k\delta t)^3}{3} 
\int_{t^i}^{t^{i+k}}
\| \nabla \frac{\partial^2 p}{\partial t^2}(s)\|^2 ds
+ \frac{k^2}{2\varepsilon_3}
\frac{((k+1)\delta t)^3}{3}
\int_{t^{i-1}}^{t^{i+k}} 
\| \nabla \frac{\partial^2 p}{\partial t^2}(s)\|^2 ds
+ \varepsilon_3 \|\Delta \tilde{\bf{e}}^{\,i+1}\|^2
\notag\\
&\le \frac{(k+1)^5 \delta t^3}{3\varepsilon_3} 
\int_{t^{i-1}}^{t^{i+k}} 
\| \nabla \frac{\partial^2 p}{\partial t^2}(s)\|^2 ds
+ \varepsilon_3 \|\Delta \tilde{\bf{e}}^{\,i+1}\|^2.
\label{5.44-new}
\end{align}
The relation  
$| \int_{t^i}^{t^{i+k}} (t^i - s)\,
\nabla \frac{\partial^2 p}{\partial t^2}(s)ds|^2
\le \int_{t^i}^{t^{i+k}} |t^i - s|^2 ds
\cdot\int_{t^i}^{t^{i+k}} 
|\nabla \frac{\partial^2 p}{\partial t^2}(s)|^2ds
=\frac{(k\delta t)^3}{3}\cdot \int_{t^i}^{t^{i+k}} 
|\nabla \frac{\partial^2 p}{\partial t^2}(s)|^2ds
$ is used to get the third inequality above. 
Similarly, we can get from \eqref{5.28-new}, \eqref{5.29-new}, and \eqref{5.30-new} that 
\begin{align}
|(Q^i, -\Delta \tilde{\bf e}^{i+1})|
&\le \frac{k^5 \delta t^3 }{3\varepsilon_3} 
\int_{t^{i-1}}^{t^{i+k}} 
\| \Delta \frac{\partial^2 {\bf u}}{\partial t^2}(s)\|^2 ds
+ \varepsilon_3 \|\Delta \tilde{\bf{e}}^{\,i+1}\|^2.
\label{5.45-new}
\\
|(R^i, -\Delta \tilde{\bf e}^{i+1})|
&\le \frac{1}{4\varepsilon_3 \delta t} \|R^i\|^2
+ \varepsilon_3\delta t \|\Delta \tilde{\bf e}^{i+1} \|^2
\le 
\frac{6 (k+1)^5 \delta t^4 }{5\varepsilon_3} 
\int_{t^{i-1}}^{t^{i+k}} 
\| \frac{\partial^3 {\bf u}}{\partial t^3}(s)\|^2 ds
+ \varepsilon_3 \delta t\|\Delta \tilde{\bf{e}}^{\,i+1}\|^2.
\label{5.46-new}
\\
|(S^i, -\Delta \tilde{\bf e}^{i+1})|
&\le 
C \|{\bf u}(t^{i+k})\|_2 \cdot 
\| \nabla[ {\bf u}(t^{i+k}) - \hat{\bf{u}}(t^i)] \|
\cdot \| \Delta \tilde{\bf e}^{i+1} \|
+
C \|\hat{\bf u}(t^{i})\|_2 \cdot 
\| \nabla[ {\bf u}(t^{i+k}) - \hat{\bf{u}}(t^i)] \|
\cdot \| \Delta \tilde{\bf e}^{i+1} \|
\label{cat0123}
\\
&\le 
\frac{C^2}{2\varepsilon_3} 
\cdot  \left( \|{\bf u}(t^{i+k})\|_2^2
+ \|\hat{\bf u}(t^{i})\|_2^2 \right) \cdot
\| \nabla[ {\bf u}(t^{i+k}) - \hat{\bf{u}}(t^i)] \|^2
+ \varepsilon_3 \| \Delta \tilde{\bf e}^{i+1} \|^2
\label{cat0124}
\\
&\le 
\frac{4C^2 (k+1)^5 \delta t^3 \max_{t\in [0,T]} \|{\bf u}(t)\|_2^2} {3\varepsilon_3} 
\int_{t^i}^{t^{i+k}} \|\nabla \frac{\partial^2 {\bf u}}{\partial t^2}(s)\|^2 ds 
+ \varepsilon_3 \| \Delta \tilde{\bf e}^{i+1} \|^2
.
\label{5.47-new}
\end{align}
The estimate \eqref{Teman_estimate} is used to get \eqref{cat0123}, and the analysis similar to $\|P^i\|$ in \eqref{5.44-new} is applied on 
$\| \nabla[ {\bf u}(t^{i+k}) - \hat{\bf{u}}(t^i)] \|$ in \eqref{cat0124} to earn \eqref{5.47-new}.

\vskip 0.2cm
{\bf Step 2.6}.
Inserting \eqref{5.31-new}, \eqref{5.32-new},
\eqref{5.36-new}, \eqref{5.43-new}, \eqref{5.44-new}, 
\eqref{5.45-new}, \eqref{5.46-new},
\eqref{5.47-new} to \eqref{5.26-new} yields
\begin{align}
&A\left(\|\nabla\bar{\bf{e}}^{i+1}\|^2 - \|\nabla\bar{\bf{e}}^i\|^2\right)
+ \|B\nabla\bar{\bf{e}}^{i+1} - D\nabla\bar{\bf{e}}^i\|^2
- \|B\nabla\bar{\bf{e}}^{i} - D\nabla\bar{\bf{e}}^{i-1}\|^2
\notag\\
&+E\|\nabla\bar{\bf{e}}^{i+1} - \nabla\bar{\bf{e}}^i \|^2
-F\|\nabla\bar{\bf{e}}^i - \nabla\bar{\bf{e}}^{i-1}\|^2
\notag\\
&
+2\nu\delta t \frac{k-1}{k} \|\Delta\tilde{\bf{e}}^{i+1}\|^2
+\frac{2\nu\delta t}{k} \|\Delta\bar{\bf{e}}^{i+1}\|^2
+\nu\delta t \left(\|\Delta\bar{\bf{e}}^{i+1}\|^2
 - \|\Delta\bar{\bf{e}}^i\|^2 \right) 
\notag\\
\le &
\delta t ( 11 \varepsilon_3 + \nu\varepsilon_4)
\|\Delta\tilde{\mathbf{e}}^{i+1} \|^2 
+ \frac{\nu\delta t}{\varepsilon_4} 
\left( \frac{1}{2} + \varepsilon_3 \right) 
 \|\Delta\tilde{\bf{e}}^i \|^2 
\notag\\
&+ 
C_{260}(\varepsilon_3) \delta t \|\nabla\tilde{\bf{e}}^i\|^2 
\left(\|\Delta\hat{\bf{u}}^i\|^2 + \|\hat{\bf{u}}(t^i)\|^2_2 + 1 \right)
+  
C_{260}(\varepsilon_3)  \delta t \|\nabla\bar{\bf{e}}^i\|^2 
\left(\|\Delta\bar{\bf{u}}^i\|^2 + \|{\bf{u}}(t^i)\|^2_2 \right)
\notag\\
&  + 
C_{260}(\varepsilon_3)  \delta t \|\nabla\bar{\bf{e}}^{i-1}\|^2 
\left(\|\Delta\bar{\bf{u}}^{i-1}\|^2 + \|{\bf{u}}(t^{i-1})\|^2_2 \right)
+
C_{260}(\varepsilon_3)  C^4_0\delta t^5 \left( \|\Delta\hat{\bf{u}}^i\|^2 
        + \|\hat{\bf{u}}(t^i)\|^2_2\right)
\notag\\
&+ C_{260}(\varepsilon_3)  \delta t^4 
\int_{t^{i-1}}^{t^{i+k}} 
\bigg(
\|\nabla \frac{\partial^2 p}{\partial t^2}(s)\|^2
+ \|\Delta \frac{\partial^2 {\bf{u}}}{\partial t^2}(s)\|^2
+ \| \frac{\partial^3 {\bf{u}}}{\partial t^3}(s)\|^2
+ \|\nabla \frac{\partial^2 {\bf{u}}}{\partial t^2}(s)\|^2
\bigg) ds.
\end{align}
Here,
\begin{align}
C_{260}(\varepsilon_3)
=\max\Big\{
& \frac{2C_{201}^2}{\varepsilon_3}, \quad
\frac{C \nu_{\max} }{\varepsilon_4 \varepsilon_3^3},\quad
\frac{2C_{202}^2}{\varepsilon_3},\quad
\frac{2(k+1)^5}{3\varepsilon_3},\quad
\frac{2k^5}{3\varepsilon_3},\quad
\frac{6(k+1)^5}{5\varepsilon_3},\quad
\notag\\
&\frac{8C^2 (k+1)^5 \max_{t\in[0,T]}\|{\bf u}(t)\|_2^2}{3\varepsilon_3},\quad
\frac{8 (k+1)^2 C_{201}^2 C_1}{\varepsilon_3}
\Big\}
\end{align}
where 
$C$ is the uniform constant given at the beginning of Section\,\ref{sec_error_analysis}, 
$C_{201}$ is defined in \eqref{def_C201},
$C_{202}$ in \eqref{def_C202}, 
$C_1$ in \eqref{def_C1}, and $\varepsilon_3$ and $\varepsilon_4$ are arbitrary positive real numbers.

Proceeding as in Step 1 and summing over 
$i=1,\cdots, m$, for any $m\le n$,  we discard the unnecessary terms to obtain
\begin{align}
& A\|\nabla\bar{\bf{e}}^{m+1}\|^2 
+ \|B\nabla\bar{\bf{e}}^{m+1} - D\nabla\bar{\bf{e}}^m\|^2
+(E-F) \sum_{i=1}^m \|\nabla\bar{\bf{e}}^{i+1} - \nabla\bar{\bf{e}}^i \|^2
\notag\\
&+ \Biggl[
  \frac{2\nu (k-1)}{k}
  - \Bigl( \nu\varepsilon_4 + 11 \varepsilon_3\Bigr)
  - \frac{\nu }{\varepsilon_4}\Bigl(\tfrac{1}{2} +      \varepsilon_3\Bigr)
\Biggr]\delta t \sum_{i=1}^m \|\Delta \tilde{\bf e}^{i+1}\|^2 
+\frac{2\nu\delta t}{k} \sum_{i=1}^m \|\Delta\bar{\bf{e}}^{i+1}\|^2
+\nu\delta t \|\Delta\bar{\bf{e}}^{m+1}\|^2
\notag\\
\le \quad  &\;
C_{260}(\varepsilon_3) \delta t 
\sum_{i=1}^m \|\nabla\tilde{\bf{e}}^i\|^2 
\Big(\|\Delta\hat{\bf{u}}^i\|^2 + \|\hat{\bf{u}}(t^i)\|^2_2 + 1 \Big)
\notag\\
& +2C_{260}(\varepsilon_3) \delta t \sum_{i=1}^m \|\nabla\bar{\bf{e}}^i\|^2 
\Big(\|\Delta\bar{\bf{u}}^i\|^2 + \|{\bf{u}}(t^i)\|^2_2 \Big)
+ C_{260} C_0^4\delta t^5 \sum_{i=1}^m \Big( \|\Delta\hat{\bf{u}}^i\|^2 
+ \|\hat{\bf{u}}(t^i)\|^2_2\Big)
\notag\\
&+ C_{260} (\varepsilon_3) \delta t^4 
\int_{t^{0}}^{t^{m+k}}\!\!
\Biggl(
\|\nabla \tfrac{\partial^2 p}{\partial t^2}(s)\|^2
+ \|\Delta \tfrac{\partial^2 \bf{u}}{\partial t^2}(s)\|^2
+ \|\tfrac{\partial^3 \bf{u}}{\partial t^3}(s)\|^2
+ \|\nabla \tfrac{\partial^2 \bf{u}}{\partial t^2}(s)\|^2
\Biggr) ds + M_1,
\label{cat0125}
\end{align}
where $M_1=
\| B\nabla\bar{\bf e}^1 - D \nabla\bar{\bf e}^0\|^2
+ F \|\nabla\bar{\bf e}^1 - \nabla\bar{\bf e}^0  \|^2
+\nu\delta t \|\Delta\bar{\bf e}^1 \|^2
+ \frac{\nu \delta t}{\varepsilon_4} (\frac{1}{2}+\varepsilon_3) \| \Delta\tilde{\bf e}^1 \|^2$. By using an explicit scheme such as the second order Runge-Kutta method, it can be controlled that $M_1<\hat{C}_{260} \delta t^6$ where $\hat{C}_{260}$ only depends on the exact solution. Because $M_1$ is far smaller than the two proceeding terms involving $\delta t^5$ and $\delta t^4$, we neglect this term in the subsequent analysis. 
Stability requires the coefficient of
$\sum_{i=1}^m \|\Delta \tilde{\bf e}^{i+1}\|^2$ to be positive, which yields 
$
\frac{k-1}{k} \ge \frac{\varepsilon_4}{2} + \frac{1}{4\varepsilon_4} + \varepsilon_3 \left(\frac{11}{\nu} + \frac{1}{2\varepsilon_4}\right)$. 
According to Lemma\,\ref{lemma-k-values}, this is true for $k\ge 4$ when  $\varepsilon_4 = 1/\sqrt{2}$,  and $\varepsilon_3 =E_\varepsilon \nu$ with 
$E_\varepsilon= \frac{\frac{3}{4}-\frac{1}{\sqrt{2}}}{11+\frac{\nu_{\max}}{\sqrt{2}}}$.
Using these special values of $\varepsilon_3$ and $\varepsilon_4$, \eqref{cat0125} reduces to (after dividing $A=1/(2k)$ and dropping some positive terms on the left) 
\begin{align}
& \|\nabla\bar{\bf{e}}^{m+1}\|^2 
+ \nu\delta t \sum_{i=1}^m \|\Delta\bar{\bf{e}}^{i+1}\|^2
\notag\\
\le & \quad 2kC_{260}(\varepsilon_3) \delta t 
\sum_{i=1}^m \|\nabla\bar{\bf{e}}^i\|^2 
\left(\|\Delta\bar{\bf{u}}^i\|^2 + \|{\bf{u}}(t^i)\|^2_2 \right) 
+ 2k C_{260}(\varepsilon_3) \delta t 
\sum_{i=1}^m \|\nabla\tilde{\bf{e}}^i\|^2 
\left(\|\Delta\hat{\bf{u}}^i\|^2 + \|\hat{\bf{u}}(t^i)\|^2_2 + 1\right)
\notag\\
&+ 2k C_{260}(\varepsilon_3) C^4_0 \delta t^5 
\sum_{i=1}^m \left(\|\Delta\hat{\bf{u}}^i\|^2 + \|\hat{\bf{u}}(t^i)\|_2^2 \right) 
+ 8 k C_{260}(\varepsilon_3)\, \delta t^4 \,
(T+1)\,  C_{261},
\label{cat0126}
\end{align}
where $T+1$ is used to accommodate $[t^0, t^{m+k}]$ by assuming $k\delta t<1$, and 
\begin{align}
C_{261}\triangleq \max_{s\in [0,T+1]}\{
\|\nabla \tfrac{\partial^2 p}{\partial t^2}(s)\|^2,
\|\Delta \tfrac{\partial^2 \bf{u}}{\partial t^2}(s)\|^2, 
\|\tfrac{\partial^3 \bf{u}}{\partial t^3}(s)\|^2, 
\|\nabla \tfrac{\partial^2 \bf{u}}{\partial t^2}(s)\|^2\}.
\label{def_C261}
\end{align}
Using 
$\tilde{\bf e}^i=(k+1)\bar{\bf e}^i - k \bar{\bf e}^{i-1}$ and thus 
$\|\nabla\tilde{\bf{e}}^i\|^2 \le 2(k+1)^2 ( 
\|\nabla\bar{\bf{e}}^i\|^2 + \|\nabla\bar{\bf{e}}^{i-1}\|^2)$, 
we get
\begin{align}
& \|\nabla\bar{\bf{e}}^{m+1}\|^2 
+ \nu\delta t \sum_{i=1}^m \|\Delta\bar{\bf{e}}^{i+1}\|^2
\notag\\
\le & \quad 4k(k+1)^2
C_{260}(\varepsilon_3) \delta t 
\sum_{i=0}^m \|\nabla\bar{\bf{e}}^i\|^2 
\Bigl(
\|\Delta\bar{\bf{u}}^i\|^2 + \|{\bf{u}}(t^i)\|^2_2 
+ \|\Delta\hat{\bf{u}}^i\|^2 + \|\hat{\bf{u}}(t^i)\|^2_2 
\notag\\
&\qquad\qquad\qquad\qquad\qquad\qquad\qquad
 +  (\|\Delta\hat{\bf{u}}^{i+1}\|^2 + \|\hat{\bf{u}}(t^{i+1})\|^2_2) \cdot \chi_{i<m}   +2
\Bigr) 
\notag\\
&+ 2k C_{260}(\varepsilon_3) C^4_0 \delta t^5 
\sum_{i=1}^m \left(\|\Delta\hat{\bf{u}}^i\|^2 + \|\hat{\bf{u}}(t^i)\|_2^2 \right) 
+ 8 k C_{260}(\varepsilon_3)\, \delta t^4 \,
(T+1)\,  C_{261},
\label{cat0127}
\end{align}
where $\chi_{i<m}=1$ for $i\le m-1$ and $\chi_{i<m}=0$ for $i=m$.
Define 
\begin{align}
C_{262}(\varepsilon_3) \triangleq \max\{4k(k+1)^2, 8k C_{261} \} C_{260}(\varepsilon_3),
\label{def_C262}
\end{align}
then \eqref{cat0127} turns to, for all $m\le n$, 
\begin{align}
& \|\nabla\bar{\bf e}^{m+1}\|^2
+ \nu\delta t \sum_{i=1}^m \|\Delta\bar{\bf e}^{i+1}\|^2
\notag\\
\le\;&
C_{262}(\varepsilon_3)\,\delta t
\sum_{i=0}^m \|\nabla\bar{\bf e}^i\|^2
\Bigl(
\|\Delta\bar{\bf u}^i\|^2 + \|{\bf u}(t^i)\|_2^2
+ \|\Delta\hat{\bf u}^i\|^2 + \|\hat{\bf u}(t^i)\|_2^2
 +  (\|\Delta\hat{\bf{u}}^{i+1}\|^2 + \|\hat{\bf{u}}(t^{i+1})\|^2_2) \cdot \chi_{i<m}   +2
\Bigr) 
\notag\\
&
+ C_{262}(\varepsilon_3) C_0^4 \delta t^5
\sum_{i=1}^m
\bigl(\|\Delta\hat{\bf u}^i\|^2
+ \|\hat{\bf u}(t^i)\|^2\bigr)
+ C_{262}(\varepsilon_3)(T+1)\delta t^4.
\label{5.49-new}
\end{align}
According to \eqref{5.24-new}, \eqref{5.25-new}, and the  definition of $C_1$ in \eqref{def_C1}, we obtain 
\begin{align}
\nu\delta t\sum_{i=0}^n\|\Delta\bar{\bf{u}}^i\|^2, \,
\nu\delta t\sum_{i=0}^n\|\Delta\hat{\bf{u}}^i\|^2, \,
\|\hat{\bf{u}}(t^i)\|^2_2, \,
\|{\bf{u}}(t^i)\|^2_2 \le C_1, 
\quad \forall i\le n,
\label{5.50-new}
\end{align}
where the inequality $\|\Delta\hat{\bf{u}}^i\|^2 
= \|(k+1)\Delta {\bf u}^i- k \Delta{\bf u}^{i-1} \|^2
\le 2(k+1)^2 (\|\Delta {\bf u}^i\|^2 + \|\Delta {\bf u}^{i-1} \|^2)$ and a similar one $\|\hat{\bf{u}}(t^i)\|^2_2
\le 2(k+1)^2 (\|{\bf u}(t^i)\|^2_2+\|{\bf u}(t^{i-1})\|^2_2)$
are used.

Applying the discrete Gronwall inequality (Lemma~2 of \cite{HuangShen2023}; see also Theorem~\ref{Theorem 5-New}) yields
\begin{align}
\|\nabla\bar{\bf{e}}^{n+1}\|^2 
+ \nu\delta t \sum_{i=0}^{n+1} \|\Delta\bar{\bf{e}}^i\|^2
\le& 
C_{262}\cdot \delta t^4 \cdot
\left( C^4_0 C_1 \Big(\frac{1}{\nu}+T\Big) +(1+T) \right)
\cdot
\exp\left( C_{262} \cdot
 \Big( \frac{3C_1}{\nu} + 3 C_1T + 2T\Big)  \right) 
\notag\\
\le & C_2\delta t^4 (C^4_0+1) ,
\label{5.51-new}
\end{align}
where 
\begin{align}
C_2\triangleq  C_{262}\cdot \exp\left( C_{262} \cdot
 \Big( \frac{3C_1}{\nu} + 3 C_1T + 2T\Big)  \right)
\cdot 
\max\left\{ C_1\left(\frac{1}{\nu} +T\right), 1+T \right\},
\end{align}
where $C_1$ is defined in \eqref{def_C1} and $C_{262}$ in \eqref{def_C262}.
Note that $C_{262}\sim \nu^{-3}$ (via \eqref{lemma4-Liu} with the choice $\varepsilon_3=E_\varepsilon\,\nu$ stated below \eqref{cat0126}), and $C_1\sim \exp(\mathcal{O}(\nu^{-4}))$ from Step~1; hence $C_2\sim \exp(\exp(\mathcal{O}(\nu^{-4})))$, the second of the three nested exponentials traced in Remark~\ref{remark4.5}.

The last thing in Step 2 is an upper bound for $\|\nabla \bar{\bf u}^{n+1} \|^2$:
\begin{align}
\|\nabla \bar{\bf u}^{n+1} \|^2
&=\|\nabla \bar{\bf e}^{n+1} + \nabla {\bf u}(t^{n+1}) \|^2 
\le 2 \|\nabla\bar{\bf e}^{n+1} \|^2 
+ 2 \| \nabla {\bf u}(t^{n+1}) \|^2 
\notag\\
&\le 2C_2 \delta t^4 (C_0^4+1) + 2 C_1
\le 4C_2 + 2 C_1 \triangleq C_3.
\label{5.52-new}
\end{align}
The relation $\delta t\le \frac{1}{2C_0^2}$ is used in the last step.

{\bf Step 3: Estimate for $|1-\xi^{n+1}|$}.
Define
\begin{equation}
s^i \triangleq r^i - r(t^i).
\end{equation}
The error equation for $s^i$ for the general viscosity $\nu$ is 
\begin{align}
s^{i+1} - s^i 
=& \nu \delta t
\bigg( \|\nabla{\bf u}(t^{i+1})\|^2
- \frac{r^{i+1}}{E(\bar{\bf u}^{i+1}) + \bar{C}}
\|\nabla(\bar{\bf u}^{i+1})\|^2 
\bigg) \notag \\ 
&+ 
\delta t \bigg( 
\frac{r^{i+1}}{E(\bar{\bf u}^{i+1}) + \bar{C}}
({\bf f}^{i+1}, \bar{\bf u}^{i+1})
- ({\bf f}(t^{i+1}), {\bf u}(t^{i+1}))
\bigg)
+ T^i,
\label{5.53-new}
\end{align}
where the remainder term $T_i$ is given as 
\begin{equation}
\label{5.54}
T_i = r(t_i) - r(t_{i+1}) + \delta t\cdot r_t(t_{i+1})
= \int_{t_i}^{t_{i+1}} (s - t_i)\, r_{tt}(s)\, ds .
\end{equation}
The summation of \eqref{5.53-new} over time steps $i=0, \cdots, n$ is
\begin{align}
s^{n+1} 
=& \nu \delta t
\sum_{i=0}^n \bigg( \|\nabla{\bf u}(t^{i+1})\|^2
- \frac{r^{i+1}}{E(\bar{\bf u}^{i+1}) + \bar{C}}
\|\nabla(\bar{\bf u}^{i+1})\|^2 
\bigg) \notag \\ 
&+ 
\delta t \sum_{i=0}^n \bigg( 
\frac{r^{i+1}}{E(\bar{\bf u}^{i+1}) + \bar{C}}
({\bf f}^{i+1}, \bar{\bf u}^{i+1})
- ({\bf f}(t^{i+1}), {\bf u}(t^{i+1}))
\bigg)
+ \sum_{i=0}^n T^i.
\label{5.55-new}
\end{align}
Based on \eqref{5.54} and $r_{tt}=\int_{\Omega} |{\bf u}_t|^2 + {\bf u}\cdot{\bf u}_{tt} dx$, we have 
\begin{align}
|T^i|\le \delta t \int_{t^i}^{t^{i+1}} |r_{tt}(s)| ds
\le\delta t \int_{t^i}^{t^{i+1}} 
( \|{\bf u}_t(s)\|^2 + \|{\bf u}(s)\| \|{\bf u}_{tt}(s)\| )ds.
\label{5.57-new}
\end{align}
The first pair of parentheses in \eqref{5.53-new} can be split as
\begin{align}
& \Bigl|
\|\nabla {\bf u}(t^{i+1})\|^2
-
\frac{r^{i+1}}{E(\bar{\bf u}^{i+1})+\bar{C}}
\|\nabla \bar{\bf u}^{\,i+1}\|^2
\Bigr| \notag\\
\le &
\|\nabla {\bf u}(t^{i+1})\|^2
\Bigl|
1-\frac{r^{i+1}}{E(\bar{\bf u}^{i+1})+\bar{C}}
\Bigr|
+\frac{r^{i+1}}{E(\bar{\bf u}^{i+1})+\bar{C}}
\Bigl|
\|\nabla {\bf u}(t^{i+1})\|^2
-
\|\nabla \bar{\bf u}^{\,i+1}\|^2
\Bigr|
=: W^i_1 + W^i_2.
\label{5.58}
\end{align}
As for $W^i_1$, using 
$r(t^{i+1})=E({\bf u}(t^{i+1}))+\bar{C}$ and 
$E({\bf u})+\bar{C}\ge 1$ for all ${\bf u}$  yields
\begin{align}
W^i_1
&\le \max_{t\in[0,T]} \|\nabla{\bf u}(t)\|^2 \cdot
\left|
1-\frac{r^{\,i+1}}{E(\bar{\bf u}^{\,i+1})+\bar{C}}
\right|
\notag\\
&\le \max_{t\in[0,T]} \|\nabla{\bf u}(t)\|^2 \cdot 
\left( 
\left|
\frac{r(t^{\,i+1})}{E({\bf u}(t^{\,i+1}))+\bar{C}}
-
\frac{r^{\,i+1}}{E({\bf u}(t^{\,i+1}))+\bar{C}}
\right|
+ 
\left|
\frac{r^{\,i+1}}{E({\bf u}(t^{\,i+1}))+\bar{C}}
-
\frac{r^{\,i+1}}{E(\bar{\bf u}^{\,i+1})+\bar{C}}
\right| \right)
\notag\\
&\le \max_{t\in[0,T]} \|\nabla{\bf u}(t)\|^2 \cdot
\Bigl(|s_{i+1}|+ 
 |r^{i+1}|\cdot \bigl|E({\bf u}(t^{\,i+1}))-E(\bar{\bf u}^{\,i+1})\bigr| 
\Bigr)
\notag\\
&\le C_{301} \cdot 
\Bigl(|s_{i+1}|+ 
 \bigl|E({\bf u}(t^{\,i+1}))-E(\bar{\bf u}^{\,i+1})\bigr| 
\Bigr),
\label{5.59-new}
\end{align}
where 
\begin{align}
C_{301}\triangleq \max_{t\in[0,T]} \|\nabla{\bf u}(t)\|^2 \cdot \max\{1, M_T\}.
\label{def_C301}
\end{align}
Note $|r^{i+1}|\le M_T$ from Theorem\,\ref{thm:HS2023_Thm6} is used to get \eqref{5.59-new}.
As for $W_2^i$, it follows from \eqref{5.52-new}, using $E(\bar{\mathbf u})+\bar C>1$ for all $\bar{\mathbf u}$ 
and $r^{i+1}\le M_T$ that
\begin{align}
W_2^i
&\le M_T \Bigl|  \| \nabla \bar{\bf u}^{\,i+1} \|^2
      - \| \nabla {\bf u}(t^{\,i+1}) \|^2 \Bigr| 
      \notag\\
&\le M_T \, \| \nabla \bar{\bf u}^{\,i+1}
           - \nabla {\bf u}(t^{\,i+1}) \|
      \cdot \bigl( \| \nabla \bar{\bf u}^{\,i+1} \|
            + \| \nabla {\bf u}(t^{\,i+1}) \| \bigr) 
            \notag\\
&\le C_{302} \cdot \| \nabla \bar{\bf e}^{\,i+1} \|,
\label{5.60-new}
\end{align}
where
\begin{equation}
C_{302} \triangleq  M_T \bigl(\sqrt{C_3} + \max_{t\in[0,T]} \|\nabla{\bf u}(t)\| \bigr).
\label{def_C302}
\end{equation}
The second pair of parentheses in \eqref{5.53-new} divides to
\begin{align}
&\Biggl|
\frac{r^{i+1}}{E(\bar{\bf u}^{i+1}) + \bar C}
\bigl( {\bf f}^{i+1}, \bar{\bf u}^{i+1} \bigr)
- \bigl( {\bf f}(t^{i+1}), {\bf u}(t^{i+1}) \bigr)
\Biggr|  \notag\\
\le
& \bigl| ( {\bf f}(t^{i+1}), {\bf u}(t^{i+1}) ) \bigr|
\cdot \Biggl|
1 - \frac{r^{i+1}}{E(\bar{\bf u}^{i+1}) + \bar C}
\Biggr|  
+ \frac{|r^{i+1}|}{E(\bar{\bf u}^{i+1}) + \bar C}
\Biggl|
( {\bf f}^{i+1}, \bar{\bf u}^{i+1} )
- ( {\bf f}(t^{\,i+1}), {\bf u}(t^{i+1}) )
\Biggr| \notag \\
=:& W_3^i + W_4^i.
\label{5.61}
\end{align}
The analysis of $W_3^i$ is similar to $W_1^i$ and is given by
\begin{align}
W_3^i \le \max_{t\in [0,T]} \|{\bf f} \|
\max_{t\in [0,T]} \|{\bf u} \| 
\left|
1-\frac{r^{\,i+1}}{E(\bar{\bf u}^{\,i+1})+\bar{C}}
\right|
\le C_{303} \cdot 
\Bigl(|s_{i+1}|+ 
 \bigl|E({\bf u}(t^{\,i+1}))-E(\bar{\bf u}^{\,i+1})\bigr| 
\Bigr),
\label{5.62-new}
\end{align}
where 
\begin{align}
C_{303}\triangleq \max_{t\in [0,T]} \|{\bf f} \|\cdot
\max_{t\in [0,T]} \|{\bf u} \| \cdot 
\max\{1, M_T\}.
\label{def_C303}
\end{align}
As for $W_4^i$, using ${\bf f}^{i+1}={\bf f}(t^{i+1})$ and $|r^{i+1}|\le M_T$, we obtain
\begin{align}
W_4^i \le  C_{304} \|\bar{\bf e}^{i+1}\|,
 \quad \text{ where } 
 C_{304}\triangleq M_T \max_{t\in[0,T]} \|{\bf f}\|.
 \label{5.63-new}
\end{align}
As for $\bigl|E({\bf u}(t^{\,i+1}))-E(\bar{\bf u}^{\,i+1})\bigr|$, one has
\begin{align}
\bigl|E({\bf u}(t^{\,i+1}))-E(\bar{\bf u}^{\,i+1})\bigr| 
&\le \frac{1}{2} \bigl(\|{\bf u}(t^{i+1})\| + 
\| \bar{\bf u}^{i+1} \| \bigr)
\cdot 
\| {\bf u}(t^{i+1}) - \bar{\bf u}^{i+1}\|
\notag\\
& \le C_{305} \|\bar{\bf e}^{i+1} \|,
\label{5.64-new}
\end{align}
where
\begin{align}
C_{305}\triangleq \frac{1}{2}
\left( 
\max_{t\in [0,T]} \|{\bf u} \| 
+ C\sqrt{C_3} \right).
\label{def_C305}
\end{align}
Here, the Poincar\'{e} inequality \eqref{Sobole-inequality}  
$\| \bar{\bf u}^{i+1} \|\le C \|\nabla \bar{\bf u}^{i+1} \|$
and $\|\nabla \bar{\bf u}^{i+1} \|\le \sqrt{C_3}$ 
from \eqref{5.52-new} are used.

Inserting \eqref{5.58} and \eqref{5.61} along with the estimates for $|T^i|$, $W^i_j$ for $j=1,2,3,4$, 
and  $\bigl|E({\bf u}(t^{\,i+1}))-E(\bar{\bf u}^{\,i+1})\bigr|$ to \eqref{5.55-new} yields
\begin{align}
|s^{n+1}|
\le & \nu \delta t
\sum_{i=0}^n (W_1^i + W_2^i)  
+ \delta t \sum_{i=0}^n (W_3^i+ W_4^i)
+ \sum_{i=0}^n |T^i| 
\notag \\
\le &
(\nu C_{301} + C_{303}) \delta t \sum_{i=0}^n |s^{i+1}| 
+ \nu C_{302} \delta t 
\sum_{i=0}^n \|\nabla\bar{\bf e}^{i+1} \| 
+  (\nu C_{301}C_{305} + C_{303} C_{305} + C_{304} ) \delta t \sum_{i=0}^n \|\bar{\bf e}^{i+1}\| 
\notag \\
& + \delta t \int_0^T (\|{\bf u}_t(s) \|^2 + 
  \|{\bf u}(s) \|  \|{\bf u}_{tt}(s) \| ) ds
\notag \\
\le &   
C_{311} (1+\nu) \delta t \sum_{i=0}^n |s^{i+1}| 
+ 
C_{312} (1+\nu) \delta t \sum_{i=0}^n 
\|\nabla\bar{\bf e}^{i+1} \| 
+C_{313} T \delta t
\notag \\
\le &   
(1+\nu) C_{320} \delta t \sum_{i=0}^n |s^{i+1}| 
+ (1+\nu)\, C_{320}\,  \delta t^2 
\sqrt{1+C^4_0} 
 +C_{320} \delta t.
\label{5.64-after-new}
\end{align}
where the Poincar\'{e} inequality $\|\bar{\bf e}^{i+1}\| \le C \| \nabla \bar{\bf e}^{i+1}\|$ is applied to get the third inequality, 
the estimate $\|\nabla\bar{\bf e}^{i+1} \| \le \sqrt{C_2 \delta t^4 (1+C_0^4)}$ from \eqref{5.51-new} is used in the last step, and 
\begin{align}
C_{311} &\triangleq \max\{C_{301},C_{303}  \},
\notag\\
C_{312} &\triangleq \max\{ C_{302} + C C_{301} C_{305}, 
  C C_{303} C_{305} + C C_{304}  \},
\notag\\
C_{313} &\triangleq \max_{s\in [0,T]} 
\bigl( \|{\bf u}_t(s) \|^2 + \|{\bf u}(s) \|  \|{\bf u}_{tt}(s) \| \bigr),
\notag\\
C_{320} &\triangleq \max\{C_{311}, C_{312} T \sqrt{C_2}, C_{313}T  \}
\end{align}

Applying the discrete Gronwall inequality (Lemma~1 of \cite{HuangShen2023}; the implicit-Gronwall variant: if $a^{m+1}\le \alpha\,\delta t\sum_{n=0}^{m}a^n + \gamma$ with $\alpha\,\delta t<1$, then $a^{m+1}\le \gamma\exp(\alpha T/(1-\alpha\delta t))$ for $m\delta t\le T$) yields
\begin{align}
|s^{n+1}| \le& 
C_{320} \delta t \cdot \exp\bigg(\frac{(1+\nu)C_{320}T}{1-\delta t (1+\nu) C_{320}} \bigg)
\cdot 
\bigg[ (1+\nu) \delta t \sqrt{1+C_0^4} +1 \bigg].
\label{cat0128}
\end{align}
Define
\begin{align}
C_4 := C_{320} \max\bigg\{ \exp\bigg( 2C_{320} T(1+\nu)\bigg), 
2(1+\nu)\bigg\}.
\label{def_C4}
\end{align}
and take the time step constraint 
\begin{align}
\delta t < \frac{1}{C_4}.
\label{5.67-new}
\end{align}
Since $C_{320}$ contains $\sqrt{C_2}$ with $C_2\sim \exp(\exp(\mathcal{O}(\nu^{-4})))$ from Step~2, the factor $\exp(2C_{320}T(1+\nu))$ in \eqref{def_C4} yields $C_4\sim \exp(\exp(\exp(\mathcal{O}(\nu^{-4}))))$, the third of the three nested exponentials traced in Remark~\ref{remark4.5}.

Then $\delta t< \frac{1}{2C_{320} (1+\nu)}$ and \eqref{cat0128} reduces to
\begin{align}
|s^{n+1}|
\le & C_{320}\delta t\cdot  \exp\big(2C_{320} T(1+\nu)\big)  
\cdot
\bigg[ (1+\nu)\delta t\sqrt{1+C_0^4} +1 \bigg]
\notag \\
\le &C_4 \delta t \bigg[ 1+ (1+\nu)\delta t\sqrt{1+C_0^4} \bigg]
\label{5.65-new}
\end{align}

Note $1-\xi^{n+1}=1-\frac{r^{n+1}}{E(\bar{\bf u}^{n+1})+\bar{C}}$, thus we use the analysis result from \eqref{5.59-new}, \eqref{5.65-new}, and \eqref{5.64-new}+Poincar\'{e} inequality+\eqref{5.51-new} obtain
\begin{align}
|1-\xi^{n+1}| \le &
\max\{1,M_T\}\cdot \left( |s^{n+1}| + 
|E({\bf u}(t^{n+1})) - E(\bar{\bf u}^{n+1})| \right)
\notag\\
\le& \max\{1,M_T\}\cdot 
\left[
C_4\delta t \left(
1+ (1+\nu)\delta t \sqrt{1+C_0^4}  \right)
+
C C_{305} \sqrt{C_2(1+C_0^4)} \delta t^2
\right]
\notag\\
\le & C_5 \delta t \bigg(
\delta t\sqrt{1+C^4_0} + 1
\bigg),
\label{5.68-new}
\end{align}
where 
\begin{align}
C_5 \triangleq \max\{1, M_T\}
\cdot (C_4(1+\nu) + C C_{305} \sqrt{C_2} ) > C_4.
\end{align}
Choosing 
\begin{equation}
C_0 = \max\{ 2C_5, 1.001\}
\label{def_C0}
\end{equation}
to ensure $C_0>1$ 
and the time step
\begin{align}
\delta t \le \frac{1}{1+2C_0^2},
\label{5.71-new}
\end{align}
then
\begin{align}
\delta t \le \frac{1}{1+8C_5^2}
\le \frac{1}{4C_5} <
\frac{1}{4C_4} < \frac{1}{C_4},
\end{align}
which is consistent with \eqref{5.67-new}
and on the other hand, \eqref{5.68-new} becomes
\begin{align}
|1-\xi^{n+1}|
\le \frac{C_0}{2} \delta t \bigg(
1 + \frac{\sqrt{1+C^4_0}}{1+C_0^2} \bigg)
\le C_0 \delta t.
\label{final_goal}
\end{align}
Then the induction process is completed.

It remains to convert the bounds on the barred error
$\bar{\bf e}^{i+1}$ into bounds on the unbarred error
${\bf e}^{i+1}$ and to estimate the pressure error~$e^i_p$.
From ${\bf u}^{n+1}=\eta^{n+1}\bar{\bf u}^{n+1}$ we have
\begin{equation}
{\bf e}^{n+1}={\bf u}^{n+1}-{\bf u}(t^{n+1})=\bar{\bf e}^{n+1}+(\eta^{n+1}-1)\bar{\bf u}^{n+1},
\label{cat2123}
\end{equation}
so
$\nabla{\bf e}^{n+1}=\nabla\bar{\bf e}^{n+1}+(\eta^{n+1}-1)\nabla\bar{\bf u}^{n+1}$.
Using $|\eta^{n+1}-1|=|1-\xi^{n+1}|^2\le C_0^2\delta t^2$ from
\eqref{final_goal} and $(a+b)^2\le 2a^2+2b^2$,
\[
\|\nabla{\bf e}^{n+1}\|^2
\le 2\|\nabla\bar{\bf e}^{n+1}\|^2 + 2C_0^4\delta t^4\,\|\nabla\bar{\bf u}^{n+1}\|^2.
\]
The first term is bounded by $2C_2(1+C_0^4)\delta t^4$ via
\eqref{5.51-new}, and the second by $2C_3C_0^4\delta t^4$ via
\eqref{5.52-new}. This yields
\begin{align}
\| \nabla {\bf e}^{n+1}\|^2 &\le
 [2C_2 (1+C_0^4) + 2C_3 C_0^4 ]\delta t^4.
\label{5.74-new}
\end{align}

The identity \eqref{cat2123} at index $i+1$ gives
$\Delta{\bf e}^{i+1}=\Delta\bar{\bf e}^{i+1}+(\eta^{i+1}-1)\Delta\bar{\bf u}^{i+1}$,
hence
\[
\|\Delta{\bf e}^{i+1}\|^2
\le 2\|\Delta\bar{\bf e}^{i+1}\|^2 + 2C_0^4\delta t^4\,\|\Delta\bar{\bf u}^{i+1}\|^2.
\]
Multiplying by $\nu\delta t$ and summing over $i=0,\dots,n$, the first
sum is bounded by $C_2(1+C_0^4)\delta t^4$ via \eqref{5.51-new} and the
second by $C_1$ via \eqref{5.50-new}, so
\begin{align}
\nu \delta t \sum_{i=0}^{n} \|\Delta{\bf e}^{i+1} \|^2
&\le (2C_2  (1+C_0^4) + 2C_1C_0^4) \delta t^4.
\label{5.76-new}
\end{align}

We now derive the bound on $\tfrac{\delta t}{\nu}\sum\|\nabla e^i_p\|^2$ in three
substeps. The weight $\delta t/\nu$, rather than $\nu\delta t$, is dictated
by physical dimensions: $\|\nabla p\|^2$ scales as $\nu^2\|\Delta u\|^2$
through the Stokes-pressure identity, so $\tfrac{\delta t}{\nu}\sum\|\nabla
e^i_p\|^2$ and $\nu\delta t\sum\|\Delta\bar{\bf e}^i\|^2$ share the same
physical dimensions and can be summed.
Setting $q=e^i_p$ in \eqref{5.37-new} and applying
Cauchy--Schwarz with Young's inequality (with weights $1/4,1/4$ on
$\|\nabla e^i_p\|$) gives
\begin{equation}
\|\nabla e^i_p\|^2
\le 2\|{\bf u}(t^i)\!\cdot\!\nabla{\bf u}(t^i)-\bar{\bf u}^i\!\cdot\!\nabla\bar{\bf u}^i\|^2
+ 2\nu^2\|\nabla p_s(\bar{\bf e}^i)\|^2.
\label{5.77a-new}
\end{equation}
The convection difference in \eqref{5.77a-new} is bounded by the
unbarred analog of \eqref{5.41-new} (i.e., the same identity \eqref{5.40-new}
applied without the $(k+1)$ factor, followed by Cauchy--Schwarz, elliptic
regularity, and Young's inequality):
\begin{equation}
\|{\bf u}(t^i)\!\cdot\!\nabla{\bf u}(t^i)-\bar{\bf u}^i\!\cdot\!\nabla\bar{\bf u}^i\|^2
\le 2C^4\|\nabla\bar{\bf e}^i\|^2\|\Delta\bar{\bf u}^i\|^2
+ 2\|{\bf u}(t^i)\|_2^2\|\nabla\bar{\bf e}^i\|^2.
\label{5.77b-new}
\end{equation}
The Stokes-pressure term in \eqref{5.77a-new} is bounded by
\eqref{lemma4-Liu} with $\varepsilon=1/2$:
\begin{equation}
\|\nabla p_s(\bar{\bf e}^i)\|^2\le \|\Delta\bar{\bf e}^i\|^2 + C_S(\tfrac12)\,\|\nabla\bar{\bf e}^i\|^2.
\label{5.77c-new}
\end{equation}
Substituting \eqref{5.77b-new} and \eqref{5.77c-new} into \eqref{5.77a-new},
and using $\|{\bf u}(t^i)\|_2^2 \le C_1$ from \eqref{def_C1}, gives the
pointwise bound
\begin{equation}
\|\nabla e^i_p\|^2
\le 4C^4\|\nabla\bar{\bf e}^i\|^2\|\Delta\bar{\bf u}^i\|^2
+ 4 C_1\,\|\nabla\bar{\bf e}^i\|^2
+ 2\nu^2\|\Delta\bar{\bf e}^i\|^2
+ 2\nu^2 C_S(\tfrac12)\,\|\nabla\bar{\bf e}^i\|^2.
\label{5.77d-new}
\end{equation}
Multiplying \eqref{5.77d-new} by $\tfrac{\delta t}{\nu}$ and summing over
$i=0,\dots,n+1$ produces four sums on the right-hand side, each of which is
bounded as follows.
First, applying the pointwise form of \eqref{5.51-new} (at each
$m\le n+1$) and using $\nu\delta t\sum\|\Delta\bar{\bf u}^i\|^2\le C_1$ from \eqref{5.50-new},
\[
\frac{\delta t}{\nu}\sum_{i=0}^{n+1}\|\nabla\bar{\bf e}^i\|^2\|\Delta\bar{\bf u}^i\|^2
\le \frac{1}{\nu}\max_{i}\|\nabla\bar{\bf e}^i\|^2 \cdot \delta t\sum_{i=0}^{n+1}\|\Delta\bar{\bf u}^i\|^2
\le \frac{C_1 C_2}{\nu^2}(1+C_0^4)\,\delta t^4.
\]
Second, using $(n+2)\delta t\le T+1$ together with the
pointwise bound from \eqref{5.51-new},
\[
\frac{\delta t}{\nu}\sum_{i=0}^{n+1}\|\nabla\bar{\bf e}^i\|^2
\le \frac{T+1}{\nu}\cdot \max_{i}\|\nabla\bar{\bf e}^i\|^2
\le \frac{(T+1) C_2}{\nu}(1+C_0^4)\,\delta t^4.
\]
Third, from \eqref{5.51-new}, $\frac{\delta t}{\nu}\sum\nu^2\|\Delta\bar{\bf e}^i\|^2 = \nu\delta t\sum\|\Delta\bar{\bf e}^i\|^2$,
\[
\frac{\delta t}{\nu}\sum_{i=0}^{n+1}\nu^2\|\Delta\bar{\bf e}^i\|^2
= \nu\delta t\sum_{i=0}^{n+1}\|\Delta\bar{\bf e}^i\|^2
\le C_2(1+C_0^4)\,\delta t^4.
\]
Fourth, $\frac{\delta t}{\nu}\sum\nu^2\|\nabla\bar{\bf e}^i\|^2 = \nu\,\delta t\sum\|\nabla\bar{\bf e}^i\|^2 \le \nu(T+1)\max_i\|\nabla\bar{\bf e}^i\|^2$, hence
\[
\frac{\delta t}{\nu}\sum_{i=0}^{n+1}\nu^2 C_S(\tfrac12)\,\|\nabla\bar{\bf e}^i\|^2
\le \nu(T+1) C_S(\tfrac12) C_2(1+C_0^4)\,\delta t^4.
\]
Inserting these into the multiplied-and-summed form of \eqref{5.77d-new}
yields
\begin{align}
\frac{\delta t}{\nu}\sum_{i=0}^{n+1}\|\nabla e^i_p\|^2
\le \delta t^4\,
\Big[
\frac{4 C^4 C_1}{\nu^2} \;+\; \frac{4 C_1 (T+1)}{\nu} \;+\; 2 \;+\; 2\nu C_S(\tfrac12)(T+1)
\Big]\,
C_2(1+C_0^4).
\label{5.78-new}
\end{align}
The dominant $1/\nu^2$ factor is absorbed into the negative-power $\nu$-dependence of $C_{5.3}$ documented in Remark~\ref{remark4.5}; the triple-exponential chain is unaffected.

\medskip
The conclusion \eqref{5.3-new} now follows from \eqref{5.51-new},
\eqref{5.74-new}, \eqref{5.76-new}, and \eqref{5.78-new}. The constant
$C_{5.3}$ in \eqref{5.3-new} is taken as the maximum of the coefficients
of $\delta t^4$ appearing in these four inequalities.

\end{proof}

\begin{remark}
\label{remark4.5}
The dependence of $C_{5.3}$ on $\nu$ as $\nu\to 0^+$ can be made explicit by
tracking the chain of constants in the proof. With the choices
$\varepsilon_1=\nu\,D_{\nu_{\max}}$ in \eqref{def_Ceps} and
$\varepsilon_3=E_\varepsilon\,\nu$ in the bound on $C_{260}$, the
leading-order behavior at each level is
\begin{align*}
&C_{100}=O(\nu^{-3}),\qquad
C_1=O\!\Big(\nu^{-3}\exp\!\big(\nu^{-4}\big)\Big),\\
&C_2,\,C_3
=O\!\Big(\exp\!\big(\exp\!\big(\nu^{-4}\big)\big)\Big),\qquad
C_4,\,C_5,\,C_0
=O\!\Big(\exp\!\big(\exp\!\big(\exp\!\big(\nu^{-4}\big)\big)\big)\Big),
\end{align*}
and consequently
\begin{align}
C_{5.3}
=O\!\Big(\exp\!\big(\exp\!\big(\exp\!\big(\nu^{-4}\big)\big)\big)\Big).
\label{C53_viscosity}
\end{align}
The triple-exponential growth originates from three nested applications of
the discrete Gronwall inequality: once for the velocity stability estimate
\eqref{5.24-new}, once for the velocity error estimate \eqref{5.51-new},
and once for the GSAV consistency $|s^{n+1}|$ in Step~3. Each application
multiplies the existing $\nu$-dependence into an exponent. As a result,
the time-step constraint $\delta t\le 1/(1+2C_0^2)$ inherits the same
triple-exponential dependence: $\delta t$ must shrink at least as fast as
\[
O\!\Big(\exp\!\big(-\exp\!\big(\exp\!\big(\nu^{-4}\big)\big)\big)\Big).
\]
to guarantee convergence as $\nu\to 0$.
\end{remark}

\section{Numerical Simulations}
\label{sec_numerical}
In this section, we use numerical simulations to investigate the temporal accuracy and robustness with respect to viscosity of the GSAV scheme \eqref{HSscheme} for the full Navier--Stokes equations with no-slip boundary conditions. The spatial discretization is based on the Legendre spectral Galerkin method as described in \cite{HuangShen2023, shen2011spectral}; hence, we refer to the resulting approach as the GSAV-Spectral method.

\subsection{Example 1: temporal accuracy when $k=4$}
\label{subsec:Eg1}
In Example 1, we examine the temporal convergence of the GSAV-Spectral method with time-step shift parameter $k=4$, using the following exact solution on $\Omega=[-1,1]^2$ (taken from \cite{HuangShen2023}):
\begin{align*}
u_{1}(x,y,t) &=  \sin(2\pi y)\,\sin^{2}(\pi x)\,\sin t, \\
u_{2}(x,y,t) &= -\sin(2\pi x)\,\sin^{2}(\pi y)\,\sin t, \\
p(x,y,t)     &=  \cos(\pi x)\,\sin(\pi y)\,\sin t.
\end{align*}
The spectral method uses 64 modes in each direction.
The constant $\bar{C}$ in the modified energy is taken as $1000$, which is consistent with the conditions in Theorem~\ref{Theorem 7-new}. We take  the viscosity $\nu=10^{-2}$ and final time $T=1$. The time step sizes $\delta t$ is decreased from $10^{-2}$ to $10^{-5}$. The $L^2$-errors of the velocity and pressure at $T=1$ are plotted in Fig.~\ref{fig:Eg1_dt}, which verifies the second-order convergence in time as proved in Theorem\,\ref{Theorem 7-new}. The figure also indicates that the error for $k=4$ is slightly smaller than that for $k=5$.
\begin{figure}[H]
\centering
\includegraphics[width=0.95\textwidth]{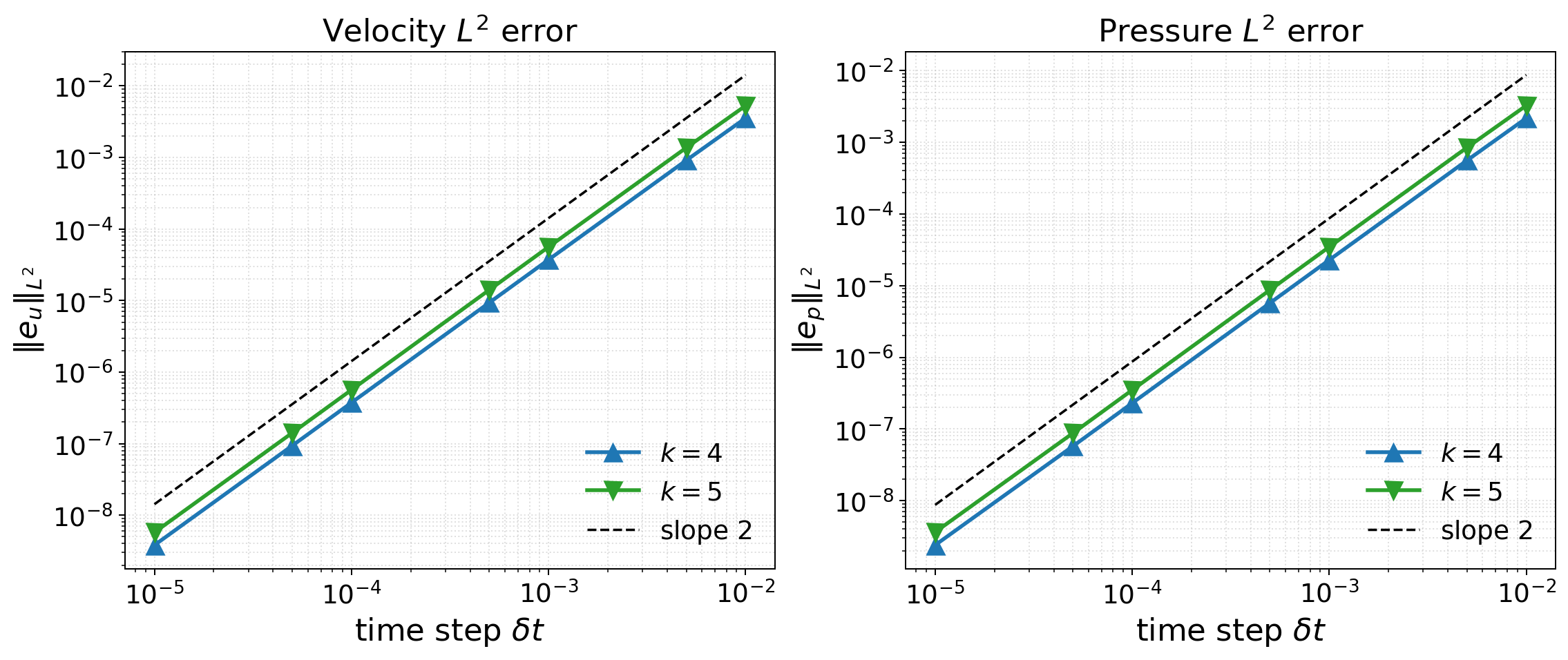}
\caption{Example~1, $\delta t$-convergence of GSAV-Spectral method at $\nu=10^{-2}$ and $T=1$ 
in $\|e_u\|_{\Ltwo}$ (left) and $\|e_p\|_{\Ltwo}$ (right).}
\label{fig:Eg1_dt}
\end{figure}


\subsection{Example 2: non-robustness on perturbed Kovasznay problem}
\label{subsec:Eg2}
In Example 2, we solve a perturbed Kovasznay problem \cite{Kovasznay1948} on 
$\Omega = (-1,1)^2$ with the steady state solution $({\bf u}_K, p_K)$ where 
\begin{align}
u_{K,1}
&=
1-e^{\lambda x}\cos(2\pi y),
\qquad
u_{K,2}
=
\frac{\lambda}{2\pi}e^{\lambda x}\sin(2\pi y),
\qquad
p_K(x,y)
=
\frac{1-e^{2\lambda x}}{2}.
\label{Kovasznay_solution}
\end{align}
where the Reynolds number $Re=\frac1{\nu}$ and 
$\lambda= \frac{Re}{2}
-
\sqrt{\frac{Re^2}{4}+4\pi^2}$.
The boundary condition is given by 
$u|_{\partial\Omega}=u_K$.
The initial condition is given by 
$u(x,y,0)=u_K(x,y)+u_{\rm pert}(x,y)$ 
where
\begin{align*}
u_{\rm pert}
=
\bigl(
\partial_y\psi,\,
-\partial_x\psi
\bigr), \quad
\psi=A(1-x^2)^2(1-y^2)^2
\end{align*}
Here, $A$ is the perturbation parameter and set as $10^{-2}$ in the simulations. In the full Navier-Stokes equations \eqref{ns1}, \eqref{ns2}, the external force is zero.  

To study the impact of viscosity on the GSAV scheme \eqref{HSscheme}, we consider Reynolds numbers ranging from $10^3$ to $10^5$, as reported in Table~\ref{tab:Kov_compare}. Several numerical methods are tested for this problem.
For the GSAV-Spectral method, we use the time-step shift parameter $k=4$, SAV constant $\bar C=1$, and $128$ spectral modes in each spatial direction. The time step sizes are chosen as $\delta t=10^{-4},10^{-5}$, with a smaller step size $\delta t=10^{-6}$ additionally used for the cases $Re=8\times 10^4$ and $10^5$.

The second method is a FEM-Newton method where the time discretization is 
\begin{align}
\frac{3{\bf u}^{n+1}-4{\bf u}^n+{\bf u}^{n-1}}{2\Delta t}
-\nu \Delta {\bf u}^{n+1}
+({\bf u}^{n+1}\cdot \nabla){\bf u}^{n+1}
+\frac12 (\nabla\cdot {\bf u}^{n+1}){\bf u}^{n+1}
&=
\gamma \nabla(\nabla \cdot {\bf u}^{n+1})
-\nabla p^{n+1}
+ {\bf f}^{n+1},
\label{eq:BDF2-NSE}
\\
\nabla \cdot {\bf u}^{n+1}&=0.
\label{eq:BDF2-div}
\end{align}
Here, $\gamma>0$ is the grad-div stabilization parameter and is taken as $0.25$ in this work. The spatial discretization uses the Taylor-Hood P2/P1 finite elements with 64 uniform subdivisions in each spatial direction. The time step size $\delta=10^{-3}$. As proved in \cite[Theorem~3]{GarciaArchillaJohnNovo2025}, the scheme is robust with $L^2$ error of the velocity is second order accurate in space and 2nd order accurate in time. 
The nonlinear problem is solved with a monolithic Newton's method with preconditioners described in \cite{CharnyiHeisterOlshanskiiRebholz2017}.

In addition, to investigate the effect of the convection term, we apply the schemes \eqref{3.5} and \eqref{3.6} to the time-dependent Stokes problem associated with the perturbed Kovasznay flow, using the same initial and boundary conditions but omitting the convection term. The spatial discretization again employs the spectral method with 128 modes in each direction, and the time step size is set to $\delta t=10^{-4}$.

Table~\ref{tab:Kov_compare} reports the velocity $L^{2}$ errors $\|{\bf u} - {\bf u}_K\|$ for these schemes over Reynolds numbers ranging from $10^4$ to $10^5$ at $T=1$. The GSAV-Spectral method for the full Navier--Stokes equations blows up when $Re\ge 2\times 10^4$ with $\delta t=10^{-4}$. Reducing the time step to $\delta t=10^{-5}$ improves the results, but blow-up still occurs for $Re\ge 8\times 10^4$. Even with a smaller time step $\delta t=10^{-6}$, the same blow-up behavior persists when $Re\ge 8\times 10^4$. 
We note that near the moment of blow-up, the  intermediate velocity $\bar{\mathbf u}^{n+1}$ and the pressure $p^{n+1}$ diverge to numerical infinity, while $\xi^{n+1}$ and $\eta^{n+1}$ collapse to nearly zero, which forces the corrected velocity $\mathbf u^{n}=\eta^{n}\bar{\mathbf u}^{n}$ to vanish; the weak-stability bound of Theorem~\ref{thm:HS2023_Thm6} is therefore preserved in a trivial sense.

In contrast, when the convection term is removed, the Spectral-Stokes solver produces smooth results for all tested Reynolds numbers. Furthermore, the FEM-Newton method with a fully implicit treatment of the convection term also eliminates all blow-ups. Figure~\ref{fig:Kov_surf_v_Re1e5} displays the plots of the second velocity component at $t=0.05$ for $Re=10^{5}$. At this time instant, the GSAV-Spectral method develops unphysical oscillations that eventually lead to blow-up, whereas the FEM-Newton method produces a smooth solution.

\begin{table}[H]
\centering
\footnotesize
\setlength{\tabcolsep}{6pt}
\renewcommand{\arraystretch}{1.2}
\caption{Comparison of three schemes for the perturbed Kovasznay problem.  This tables shows the velocity $L^{2}$ errors $\|{\bf u} - {\bf u}_K\|$ at $T=1$.}
\label{tab:Kov_compare}
\begin{tabular}{r|c|c|c|c}
\hline
\rule{0pt}{2.6ex}
$Re$
  & \shortstack{FEM-Newton-NSE \\ $\delta t=10^{-3}$}
  & \shortstack{GSAV-Spectral-NSE \\ $\delta t=10^{-4}$}
  & \shortstack{GSAV-Spectral-NSE \\ $\delta t=10^{-5}$}
  & \shortstack{Spectral-Stokes \\ $\delta t=10^{-4}$} \\[1.2ex]
\hline
$10^4$  & $3.24\!\times\!10^{-2}$ & $3.85\!\times\!10^{-2}$ & $3.84\!\times\!10^{-2}$ & $1.99\!\times\!10^{-2}$ \\
$2\times 10^4$  & $3.31\!\times\!10^{-2}$ & Blow-up        & $3.89\!\times\!10^{-2}$ & $1.99\!\times\!10^{-2}$ \\
$4\times 10^4$  & $3.37\!\times\!10^{-2}$ & Blow-up  & $3.97\!\times\!10^{-2}$ & $1.99\!\times\!10^{-2}$ \\
$5\times 10^4$  & $3.39\!\times\!10^{-2}$ & Blow-up       & $4.01\!\times\!10^{-2}$ & $1.99\!\times\!10^{-2}$ \\
$8\times 10^4$  & $3.42\!\times\!10^{-2}$ & Blow-up        & Blow-up        & $1.99\!\times\!10^{-2}$ \\
$10^{5}$ & $3.43\!\times\!10^{-2}$ & Blow-up & Blow-up        & $1.99\!\times\!10^{-2}$ \\
\hline
\end{tabular}
\end{table}

\begin{figure}[H]
\centering
\includegraphics[width=0.48\textwidth]{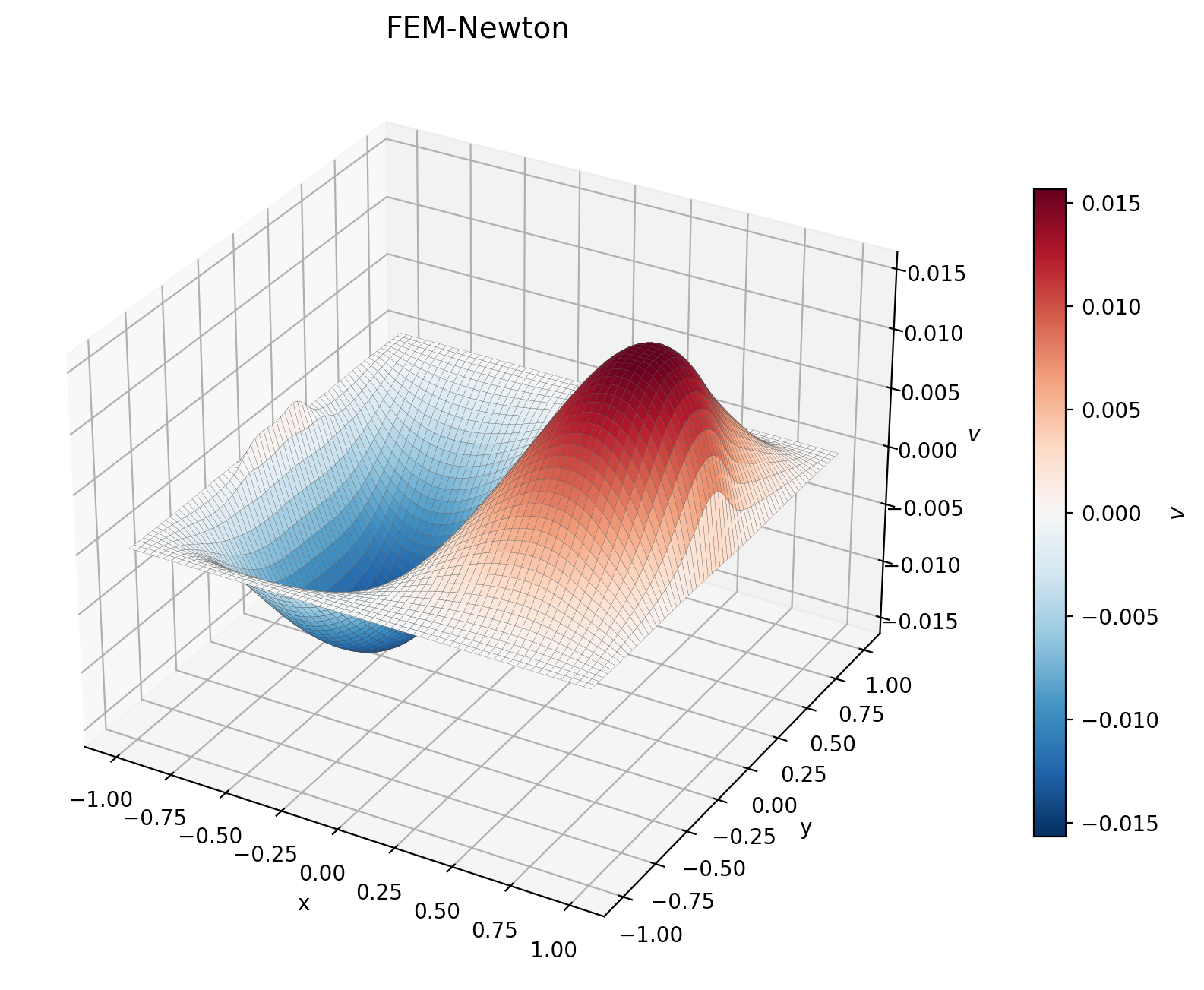}\hfill
\includegraphics[width=0.48\textwidth]{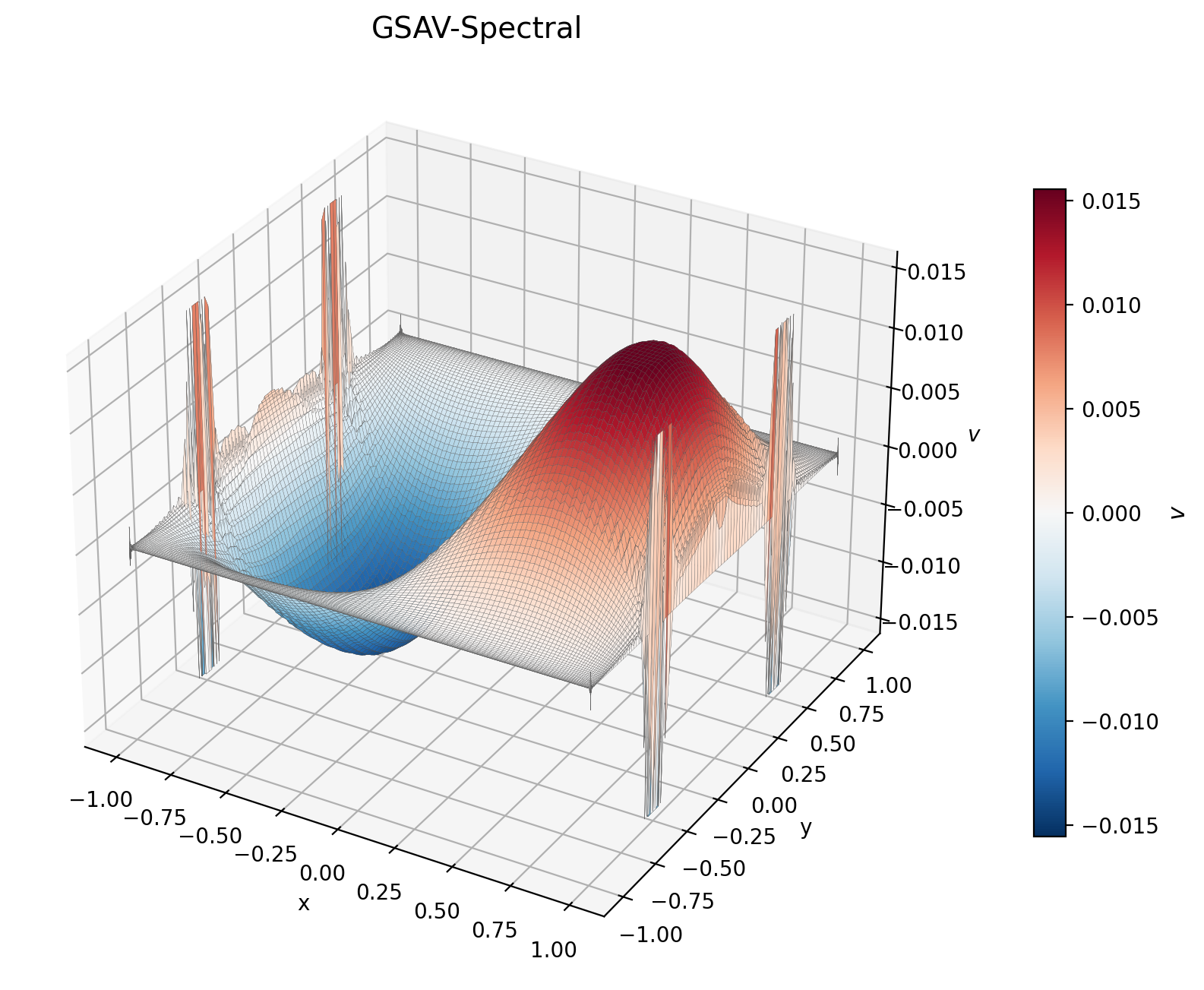}
\caption{Plots of the second velocity component $v$ for the Kovasznay-perturbed flow at $t=0.05$ when $Re=10^{5}$, $A=10^{-2}$.
Left: FEM-Newton method. Right: GSAV-Spectral method. }
\label{fig:Kov_surf_v_Re1e5}
\end{figure}

These results strongly indicate that the GSAV scheme \eqref{HSscheme} is not robust at high Reynolds number, and the limitation is intrinsic to its explicit treatment of the convection term.

\section{Conclusions}
\label{sec-discussion}
This work refines the analysis of the GSAV consistent-splitting scheme proposed in \cite{HuangShen2023} in two directions.  First, the lower bound on the key parameter $k$ is reduced from $5$ to $4$ in the stability analysis of the time-dependent Stokes equations (spatial dimension $d=2$ or $3$) and in the convergence analysis of the full Navier--Stokes equations ($d=2$).  One benefit of this reduction is a smaller error, as discussed in Remark~\ref{remark1} and observed numerically in Example~1 (Figure~\ref{fig:Eg1_dt}).
Second, the viscosity is extended from unity to arbitrary positive values in both problems.  For the Navier--Stokes equations, the resulting upper bound on the velocity error in the $H^{1}$ norm is found to depend on negative powers of the viscosity (see Theorem\,\ref{Theorem 7-new} and  Remark\,\ref{remark4.5}).

Following \cite{John2021}, a numerical scheme for the Navier--Stokes equations is called \emph{robust} if the $L^{2}$ error bound of the velocity contains no negative powers of the viscosity, and \emph{non-robust} otherwise.  A rough $L^{2}$ estimate can be obtained by combining the Poincar\'e inequality~\eqref{Sobole-inequality} with the $H^{1}$ bound~\eqref{5.3-new}, namely
\begin{align}
\|\mathbf{e}^{n}\|\;\le\; C\,C_{5.3}\,\delta t^{2}.
\end{align}
Although the constant $C_{5.3}$ involves negative powers of $\nu$ (see formula \eqref{C53_viscosity}), this estimate alone does not establish that the scheme is non-robust: $C_{5.3}$ is only an upper bound, and the inequality could be loose.
To resolve this, we design a carefully chosen perturbation of the Kovasznay flow with smooth and viscosity-independent data and compare the GSAV scheme of \cite{HuangShen2023} against a fully implicit Newton solver (taken as the robust reference) and against the time-dependent Stokes counterpart of the same GSAV discretization.  The experiment confirms that the GSAV scheme is not robust at high Reynolds number, and the comparison localizes the cause to its explicit treatment of the convection term.

\section*{Acknowledgments}
The authors would like to express their sincere gratitude to Dr.\,Fukeng Huang and Dr.\,Jie Shen of the Eastern Institute of Technology, Ningbo, China, for generously sharing their computer code and for their kind and invaluable support, without which the numerical part of this work would not have been possible.
J. Wu was partially supported by the National Science Foundation of the United States
(Grant No.DMS2104682 and DMS2309748). X.Zheng was partially supported by NSF grant DMS-2309747.

\appendix
-\section{Refined Theorem 1 of \cite{LiuLiuPego2007} or Lemma 4 of \cite{HuangShen2023}}
The purpose of this part is to refine the statement of Theorem~1 in \cite{LiuLiuPego2007}, which is cited as Lemma 4 in \cite{HuangShen2023}. In the original statement, the dependence of the
constant $C_S(\varepsilon)$ on the parameter $\varepsilon$ is not made explicit.
In the present work, we quantify this dependence. In particular, we show that
as $\varepsilon \to 0+$, the constant $C_S(\varepsilon)$ behaves like
$C/\varepsilon^3$, where $C$ depends only on the geometry of the domain.

\begin{theorem}[Refined Theorem 1 of \cite{LiuLiuPego2007}]
\label{refinedTheorem1LiuLiuPego}
Let $\Omega\subset\mathbb{R}^N$ ($N\ge 2$) be bounded, connected, with $C^3$ boundary.
For every $\varepsilon>0$ there exists $C_S(\varepsilon)>0$ such that for all
${\bf u}\in {\bf H}^2(\Omega)\cap {\bf H}^1_0(\Omega)$,
\begin{equation}\label{eq:thm}
\|\nabla p_s({\bf u})\|_{L^2(\Omega)}^2
\le \left(\frac12+\varepsilon\right)\|\Delta{\bf u}\|_{L^2(\Omega)}^2
+ C_S(\varepsilon)\|\nabla{\bf u}\|_{L^2(\Omega)}^2.
\end{equation}
When $\varepsilon\to 0+$, $C_S(\varepsilon)=C/\varepsilon^3$,  where the constant $C$ only relies on the domain $\Omega$. The concrete form of $C_S(\varepsilon)$ is given in \eqref{C_eps_arbi_eps} for arbitrary $\varepsilon>0$, and in \eqref{C_eps_small_eps} when $\varepsilon\to 0+$.
\end{theorem}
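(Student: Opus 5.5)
I will follow the structure of the proof of Theorem~1 in \cite{LiuLiuPego2007} and track every place where $\varepsilon$ enters. The starting point is the representation of the Stokes pressure: for ${\bf u}\in{\bf H}^2\cap{\bf H}^1_0$, $p_s({\bf u})$ is harmonic in $\Omega$ and satisfies the Neumann condition $\partial_n p_s = {\bf n}\cdot(\Delta-\nabla\nabla\cdot){\bf u}$ on $\partial\Omega$. Equivalently, it is characterized through \eqref{3.10}. Liu--Liu--Pego decompose ${\bf u}$ near the boundary. They use a cutoff $\chi$ supported in a tubular neighborhood $\Gamma_s=\{x:\operatorname{dist}(x,\partial\Omega)<s\}$ of width $s$, together with the normal coordinate. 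This lets them reduce the estimate to a half-space computation, in which the sharp constant $\tfrac12$ arises exactly, plus curvature and cutoff remainders. My plan is to redo that argument with the width $s$ kept as a free parameter, bound each remainder by explicit powers of $s$, and finally choose $s$ as a function of $\varepsilon$.

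\textbf{Key steps.} First, write $\|\nabla p_s\|^2=\int_{\partial\Omega} p_s\,\partial_n p_s$. Extend the boundary data inward using the tubular coordinates, so that $\|\nabla p_s\|^2$ equals an interior integral over $\Gamma_s$ of products of $\nabla p_s$ with second derivatives of ${\bf u}$ (essentially the tangential divergence terms). Here one uses $\nabla\cdot{\bf u}$, the identity $\nabla\times\nabla\times=-\Delta+\nabla\nabla\cdot$, and ${\bf u}|_{\partial\Omega}=0$. Second, on $\Gamma_s$ split ${\bf u}$ into its normal and tangential parts. Show that the principal term is bounded by $\|\nabla p_s\|\,\|\Delta{\bf u}\|/\sqrt2$; this is the flat-boundary Fourier computation of \cite{LiuLiuPego2007}, which is independent of $s$. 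The remainders fall into two groups:
\begin{itemize}
\item curvature terms, which are $O(1)\cdot\|\nabla p_s\|\,(\|\nabla^2{\bf u}\|_{L^2(\Gamma_s)}\,s+\|\nabla{\bf u}\|)$;
\item cutoff terms involving $\nabla\chi$, of size $s^{-1}$, which pair $\nabla p_s$ with $\nabla{\bf u}$.
\end{itemize}
Third, control $\|\nabla^2{\bf u}\|\le C\|\Delta{\bf u}\|$ by \eqref{elliptic_regularity}. Collecting the terms gives
\[
\|\nabla p_s\|\le\Big(\tfrac1{\sqrt2}+Cs\Big)\|\Delta{\bf u}\|+\frac{C}{s}\|\nabla{\bf u}\|.
\]
Squaring and applying Young's inequality $(a+b)^2\le(1+\delta)a^2+(1+\delta^{-1})b^2$ yields the coefficient $(1+\delta)(\tfrac12+Cs)^2$ on $\|\Delta{\bf u}\|^2$. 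Choosing $s\sim\varepsilon$ and $\delta\sim\varepsilon$ makes this coefficient $\le \tfrac12+\varepsilon$. The coefficient on $\|\nabla{\bf u}\|^2$ is then $C\,\delta^{-1}s^{-2}\sim C/\varepsilon^3$, which is the claimed \eqref{C_eps_small_eps}. For arbitrary $\varepsilon$ I would simply record the explicit expression $C_S(\varepsilon)=(1+\delta^{-1})C^2 s^{-2}$ with $s=\min(s_0,c\varepsilon)$, where $s_0$ is the maximal tubular width permitted by the $C^3$ boundary; this gives \eqref{C_eps_arbi_eps}.

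\textbf{Main obstacle.} The main obstacle is the second step: verifying that the $s$-dependence of the remainders is genuinely $O(s)$ on the $\Delta{\bf u}$ term and $O(s^{-1})$ on the $\nabla{\bf u}$ term, with no worse powers hidden in the tubular change of variables. In \cite{LiuLiuPego2007} these bounds were only stated qualitatively. I would first check this in the flat case, where there is no curvature and only the cutoff produces the $s^{-1}$. I would then treat the curvature corrections as smooth perturbations bounded by the $C^2$ norm of the boundary (available since the boundary is $C^3$), uniformly for $s\le s_0$. If some term instead yields $s^{-2}$, the same optimization would give a worse power of $\varepsilon$, so this bookkeeping is what determines the exponent $3$.
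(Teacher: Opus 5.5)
Your closing bookkeeping is sound and agrees with the paper. From $\|\nabla p_s\|\le(\tfrac{1}{\sqrt2}+Cs)\|\Delta{\bf u}\|+\tfrac{C}{s}\|\nabla{\bf u}\|$, Young's inequality with $\delta\sim\varepsilon$ and $s\sim\varepsilon$ gives $C_S\sim\delta^{-1}s^{-2}\sim\varepsilon^{-3}$. This is the paper's $C_R^2/\varepsilon_i$ with $C_R\sim 1/s$. (The coefficient should read $(1+\delta)(\tfrac{1}{\sqrt2}+Cs)^2$, not $(1+\delta)(\tfrac12+Cs)^2$.)

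The gap is the step that produces that inequality. You take the principal bound $\tfrac{1}{\sqrt2}\|\nabla p_s\|\,\|\Delta{\bf u}\|$ from a flat half-space Fourier computation, independent of $s$, and let curvature enter only as a smooth perturbation acting on $\nabla^2{\bf u}$. In a bounded domain this is not where the $\tfrac12$ or the $O(s)$ loss come from, and the perturbative plan does not close as stated.
\begin{itemize}
\item The half-space identity $\int|\partial_n p|^2=\int|\nabla_\tau p|^2$ on each level set uses harmonicity in the whole half-space, whereas $p_s$ is harmonic only in $\Omega$.
\item On a parallel surface $\{\Phi=t\}$, the defect $\int(|\partial_n p_s|^2-|\nabla_\tau p_s|^2)$ is controlled only by $C\|\nabla p_s\|^2_{L^2(\Omega)}$, through a Rellich-type identity on $\{\Phi>t\}$. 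That is a global bound, not a small local curvature correction to the flat identity.
\item The defect becomes small only after integrating across a layer of width $s$.
\end{itemize}
This is exactly Lemma~\ref{lem:layer}, $\int_{\Omega_s}(|{\bf a}_\perp|^2-|{\bf a}_\parallel|^2)\ge -C_0 s\|{\bf a}\|^2$ for ${\bf a}=\nabla p_s$. It is the real source of the $O(s)$ in the $\|\Delta{\bf u}\|$ coefficient, not curvature terms in $\nabla^2{\bf u}$. You also need the orthogonality $({\bf a},{\bf a}-\Delta{\bf u}_\parallel)=0$ of Lemma~\ref{lem:orth}, with ${\bf u}_\parallel=\xi T{\bf u}$, to confine $\|\nabla p_s\|^2$ to the layer. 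Your ``extend the boundary data inward'' step points toward this but never identifies it.

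With these two lemmas, your linear-inequality architecture closes, and more directly than the paper's route through \eqref{eq:59} and Claims~1--2. Write $\Delta{\bf u}_\parallel=\xi T\Delta{\bf u}+R_2$. Then
$\|{\bf a}\|^2=({\bf a},\xi T\Delta{\bf u})+({\bf a},R_2)\le\|{\bf a}_\parallel\|_{L^2(\Omega_s)}\|\Delta{\bf u}\|+C_R\|{\bf a}\|\,\|\nabla{\bf u}\|$.
Lemma~\ref{lem:layer} gives $\|{\bf a}_\parallel\|^2_{L^2(\Omega_s)}\le\tfrac{1+C_0 s}{2}\|{\bf a}\|^2$. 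Hence $\|{\bf a}\|\le\sqrt{(1+C_0s)/2}\,\|\Delta{\bf u}\|+C_R\|\nabla{\bf u}\|$, and your Young step finishes the proof.

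Conversely, what you flag as the main obstacle is the easy part. The only term that threatens an $s^{-2}$ is $(\Delta\xi)T{\bf u}$ with $\|\Delta\xi\|_\infty\sim s^{-2}$. The thin-layer Poincar\'e inequality \eqref{eq:poincare-layer}, valid because ${\bf u}\in{\bf H}^1_0$, brings it down to $s^{-1}$. The commutator $\Delta(T{\bf u})-T\Delta{\bf u}$ is first order, so it costs only $O(1)\|\nabla{\bf u}\|$ (Lemma~\ref{lem:decomp-proof}).
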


\medskip

\noindent
\textbf{Setup.}
Given ${\bf u}\in H^2\cap H^1_0$, define the Stokes pressure $p_S=p_S({\bf u})$ by
\begin{equation}\label{eq:ps-def}
\nabla p_S := (\Delta\mathcal{P}-\mathcal{P}\Delta){\bf u}.
\end{equation}
Let $\Phi(x)=dist(x,\partial\Omega)$, the distance function to the domain boundary. 
For any $s>0$,  set
\[
\Omega_s:=\{x\in\Omega:\Phi(x)<s\},\qquad \Omega_s^c:=\Omega\setminus\Omega_s.
\]
Note $\Omega_s$ is a tubular neighborhood of $\partial\Omega$ of thickness $s$. 
Since $\partial\Omega$ is $C^3$, there exists $s_0>0$ such that $\Phi(x)\in C^3(\Omega_{s_0})$. Next, we let $0<s<s_0$. 
Then ${\bf n}=-\nabla\Phi(x)$ is a  $C^2$ extension of the unit outward normal field into $\Omega_s$. 
Let $\rho\in C^\infty(\mathbb{R})$ satisfy $0\le \rho\le 1$,
$\rho\equiv 1$ on $[0,1/2]$, and $\rho\equiv 0$ on $[1,\infty)$, and set
$\xi(x):=\rho(\Phi(x)/s)$.
Then $\xi(x)$ is a smooth cutoff satisfying $0\le\xi\le 1$,
$\xi\equiv 1$ near $\partial\Omega$, and $\xi\equiv 0$ on $\Omega_s^c$.

\begin{lemma}[Cutoff decomposition and remainders]\label{lem:decomp-proof}
Define the tangential projector $T(x):=I-{\bf n}(x){\bf n}(x)^T$ on $\Omega_s$ and
set, for ${\bf u}\in {\bf H}^2(\Omega)\cap {\bf H}^1_0(\Omega)$,
\[
{\bf u}_\parallel := \xi\,T{\bf u},
\qquad
{\bf u}_\perp := {\bf u}-{\bf u}_\parallel.
\]
Then ${\bf u}={\bf u}_\perp+{\bf u}_\parallel$ and ${\bf u}_\parallel$ is supported
in $\Omega_s$. Moreover,
\begin{equation}\label{eq:Delta-u-par}
\Delta{\bf u}_\parallel
=
\xi\,T\,\Delta{\bf u} + R_2,
\end{equation}
\begin{equation}\label{eq:Delta-u-perp}
\Delta{\bf u}_\perp
=
(1-\xi)\Delta{\bf u} + \xi\,({\bf n}{\bf n}^T)\Delta{\bf u} + R_1,
\end{equation}
where the remainders are given explicitly by
\begin{align}
R_2 &:= 2\nabla \xi\cdot \nabla(T{\bf u})
      + (\Delta \xi)\,T{\bf u}
      + \xi\,\Delta(T{\bf u}) - \xi\,T\,\Delta{\bf u}, \label{eq:R2-def}\\
R_1 &:= -R_2. \label{eq:R1-def}
\end{align}
Finally, there exists a constant $C>0$, depending only on $\Omega$, such that
\begin{equation}\label{eq:Rbound-s}
\|R_1\|_{L^2(\Omega)}+\|R_2\|_{L^2(\Omega)}
\le
C_R\|\nabla{\bf u}\|_{L^2(\Omega)},
\quad \text{ where } C_R \triangleq \frac{2C}{\min\{1,s \}}.
\end{equation}
\end{lemma}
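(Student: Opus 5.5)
The decomposition ${\bf u}={\bf u}_\perp+{\bf u}_\parallel$ holds by definition, and ${\bf u}_\parallel$ is supported in $\Omega_s$ because $\xi\equiv 0$ on $\Omega_s^c$. Here $T$ is only defined on $\Omega_s$, but the product $\xi T$ extends by zero to all of $\Omega$. For \eqref{eq:Delta-u-par}, I will apply the Leibniz rule to $\xi\,(T{\bf u})$:
\[
\Delta(\xi T{\bf u}) = 2\nabla\xi\cdot\nabla(T{\bf u}) + (\Delta\xi)T{\bf u} + \xi\Delta(T{\bf u}).
\]
Adding and subtracting $\xi T\Delta{\bf u}$ produces exactly $\xi T\Delta{\bf u}+R_2$, with $R_2$ as in \eqref{eq:R2-def}.

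For \eqref{eq:Delta-u-perp}, I will start from $\Delta{\bf u}_\perp=\Delta{\bf u}-\Delta{\bf u}_\parallel=\Delta{\bf u}-\xi T\Delta{\bf u}-R_2$. Since $I-\xi T=(1-\xi)I+\xi{\bf n}{\bf n}^T$, this equals $(1-\xi)\Delta{\bf u}+\xi({\bf n}{\bf n}^T)\Delta{\bf u}+R_1$ with $R_1=-R_2$. The identities are therefore purely algebraic.

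The substantive part is the estimate \eqref{eq:Rbound-s}. Since $\|R_1\|=\|R_2\|$, it suffices to bound $R_2$. The key observation is that the second-order terms cancel. Expanding
\[
\Delta(T{\bf u}) = T\Delta{\bf u} + 2\nabla T:\nabla{\bf u} + (\Delta T){\bf u},
\]
the term $\xi\Delta(T{\bf u})-\xi T\Delta{\bf u}$ reduces to $\xi\,(2\nabla T:\nabla{\bf u}+(\Delta T){\bf u})$. This involves only first derivatives of ${\bf u}$ and at most second derivatives of ${\bf n}=-\nabla\Phi$, that is, third derivatives of $\Phi$. These are bounded on $\Omega_{s_0}$ because $\Phi\in C^3(\Omega_{s_0})$ and $s<s_0$. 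Hence $|T|\le 1$, $|\nabla T|\le C$ and $|\Delta T|\le C$ on $\Omega_s$, with $C$ depending only on $\Omega$.

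The cutoff terms satisfy $|\nabla\xi|\le \|\rho'\|_\infty/s$ and $|\Delta\xi|\le \|\rho''\|_\infty/s^2+\|\rho'\|_\infty|\Delta\Phi|/s$. This gives
\[
|R_2|\le C\Bigl(\tfrac1s+1\Bigr)\bigl(|\nabla{\bf u}|+|{\bf u}|\bigr)+\tfrac{C}{s^2}|{\bf u}|.
\]
I then take $L^2$ norms and use Poincar\'e \eqref{Sobole-inequality} to absorb $\|{\bf u}\|$ into $\|\nabla{\bf u}\|$, since ${\bf u}\in{\bf H}^1_0$.

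The main obstacle is the $1/s^2$ factor coming from $\Delta\xi$, which must be reconciled with the claimed $C_R=2C/\min\{1,s\}$. I would first exploit the fact that $\Delta\xi$ and $\nabla\xi$ are supported in the layer $\{s/2\le\Phi\le s\}$, together with a Hardy-type inequality for ${\bf u}\in{\bf H}^1_0$ near the boundary: $\|{\bf u}/\Phi\|_{L^2(\Omega_{s})}\le C\|\nabla{\bf u}\|$. On that layer $\Phi\ge s/2$, so
\[
\frac{1}{s^2}\|{\bf u}\|_{L^2(\text{layer})}\le \frac{2}{s}\,\|{\bf u}/\Phi\|\le \frac{C}{s}\|\nabla{\bf u}\|.
\]
This recovers the $1/s$ scaling. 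Combining all the terms and using $1/s+1\le 2/\min\{1,s\}$ gives $\|R_1\|+\|R_2\|\le \frac{2C}{\min\{1,s\}}\|\nabla{\bf u}\|$ after enlarging $C$. If the Hardy step is unavailable in the stated form, the fallback is the cruder bound $C/\min\{1,s\}^2$. That would propagate into a different power of $\varepsilon$ in $C_S(\varepsilon)$, so getting the Hardy step right is the crux.
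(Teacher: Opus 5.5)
Your proof is correct and follows the paper's structure. The identities \eqref{eq:Delta-u-par}--\eqref{eq:Delta-u-perp} are derived exactly as in the paper's Steps~1--3. The estimate uses the same ingredients: the commutator $\Delta(T{\bf u})-T\Delta{\bf u}=(\Delta T){\bf u}+2\sum_j(\partial_jT)(\partial_j{\bf u})$, $|\nabla\xi|\le C/s$, and $|\Delta\xi|\le C/s^2$. The only difference is how you reduce the $1/s^2$ factor from $\Delta\xi$ to $1/s$. The paper uses the thin-layer Poincar\'e inequality $\|{\bf u}\|_{L^2(\Omega_s)}\le Cs\|\nabla{\bf u}\|_{L^2(\Omega)}$, while you invoke a boundary Hardy inequality.

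As written, your Hardy step argues in the wrong direction. From $\Phi\ge s/2$ you get $|{\bf u}|/\Phi\le 2|{\bf u}|/s$, which is an upper bound on ${\bf u}/\Phi$, not on ${\bf u}$. What you actually need is $\Phi\le s$ on the support of $\Delta\xi$ (indeed on all of $\Omega_s$). Then $|{\bf u}|=\Phi\,|{\bf u}/\Phi|\le s\,|{\bf u}/\Phi|$, and hence $s^{-2}\|{\bf u}\|_{L^2(\Omega_s)}\le s^{-1}\|{\bf u}/\Phi\|\le Cs^{-1}\|\nabla{\bf u}\|$.

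With this correction, your Hardy inequality simply implies the paper's thin-layer Poincar\'e estimate, so the two routes coincide. The thin-layer version is the more elementary tool: integrate $\partial_{\bf n}{\bf u}$ along normals starting from $\partial\Omega$, where ${\bf u}=0$. Either way, your fallback bound $C/\min\{1,s\}^2$ is not needed. Using the global Poincar\'e inequality for the remaining $|{\bf u}|$-terms is fine, since they carry at most a factor $1/s$. This still gives $C(1+1/s)\le 2C/\min\{1,s\}$.
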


\begin{remark}
In \cite{LiuLiuPego2007}, the bound $\|R_1\| + \|R_2\| \le C_R \|\nabla{\bf u}\|$ is used in their proof of Claim 1 in Section 3.5, but is not proved. In addition,  the dependence of $C_R$ on $s$ is not given in \cite{LiuLiuPego2007}.
\end{remark}

\begin{proof}
\emph{Step 1: decomposition and support.}
By definition ${\bf u}={\bf u}_\perp+{\bf u}_\parallel$.
Since $\xi\equiv 0$ on $\Omega_s^c$, ${\bf u}_\parallel$ is supported in $\Omega_s$.

\emph{Step 2: formula for $\Delta{\bf u}_\parallel$.}
Compute using the product rule:
\[
\Delta(\xi\,T{\bf u})
=
(\Delta\xi)\,T{\bf u}
+2\nabla\xi\cdot\nabla(T{\bf u})
+\xi\,\Delta(T{\bf u}).
\]
Add and subtract $\xi\,T\Delta{\bf u}$ to obtain \eqref{eq:Delta-u-par} with
$R_2$ given by \eqref{eq:R2-def}.

\emph{Step 3: formula for $\Delta{\bf u}_\perp$.}
Since ${\bf u}_\perp={\bf u}-{\bf u}_\parallel$, we have
$\Delta{\bf u}_\perp=\Delta{\bf u}-\Delta{\bf u}_\parallel$.
Insert \eqref{eq:Delta-u-par} and note that $I-T={\bf n}{\bf n}^T$:
\[
\Delta{\bf u}_\perp
=
\Delta{\bf u}-\xi\,T\,\Delta{\bf u}-R_2
=
(1-\xi)\Delta{\bf u}+\xi(I-T)\Delta{\bf u}-R_2,
\]
which is \eqref{eq:Delta-u-perp} with $R_1=-R_2$, i.e. \eqref{eq:R1-def}.

\emph{Step 4: estimate of the remainders.}
All terms in $R_2$ are supported in $\Omega_s$. We estimate each term in
\eqref{eq:R2-def}.

Since $\xi(x)=\rho(\Phi(x)/s)$ with $\rho$ fixed once and for all and $\Phi$ chosen
as the signed distance function in a tubular neighborhood of $\partial\Omega$,
there exists a constant $C=C(\Omega)>0$ such that
$\|\nabla\xi\|_{L^\infty}\le C/s$ and $\|\Delta\xi\|_{L^\infty}\le C/s^2$.
Also $T$ and its first two derivatives are bounded on $\Omega_{s_0}$ since
${\bf n}\in C^2$ there.

We use the thin-layer Poincar\'e estimate (valid since ${\bf u}\in H^1_0$):
\begin{equation}\label{eq:poincare-layer}
\|{\bf u}\|_{L^2(\Omega_s)} \le C s \|\nabla{\bf u}\|_{L^2(\Omega)}.
\end{equation}
Here, the constant $C$ also only depends on $\Omega$.

Now estimate:
\[
\|2\nabla\xi\cdot\nabla(T{\bf u})\|_{L^2}
\le \frac{C}{s}\bigl(\|\nabla{\bf u}\|_{L^2(\Omega_s)}+\|{\bf u}\|_{L^2(\Omega_s)}\bigr)
\le \frac{C}{s}\|\nabla{\bf u}\|_{L^2(\Omega)} + C\|\nabla{\bf u}\|_{L^2(\Omega)},
\]
where we used \eqref{eq:poincare-layer} for the ${\bf u}$ term.

Next,
\[
\|(\Delta\xi)\,T{\bf u}\|_{L^2}
\le \frac{C}{s^2}\|{\bf u}\|_{L^2(\Omega_s)}
\le \frac{C}{s}\|\nabla{\bf u}\|_{L^2(\Omega)}.
\]
Finally, expand the commutator:
\[
\Delta(T{\bf u})-T\Delta{\bf u}
=
(\Delta T){\bf u}+2\sum_{j=1}^N (\partial_j T)(\partial_j{\bf u}),
\]
so (using boundedness of $\Delta T$ and $\nabla T$)
\[
\|\xi(\Delta(T{\bf u})-T\Delta{\bf u})\|_{L^2}
\le
C\|{\bf u}\|_{L^2(\Omega_s)}+C\|\nabla{\bf u}\|_{L^2(\Omega_s)}
\le
Cs\|\nabla{\bf u}\|_{L^2(\Omega)}+C\|\nabla{\bf u}\|_{L^2(\Omega)}.
\]
Collecting the three bounds yields
\[
\|R_2\|_{L^2(\Omega)}
\le
C\left(1+\frac{1}{s}\right)\|\nabla{\bf u}\|_{L^2(\Omega)}.
\]
Since 
$ \left(1+\frac{1}{s}\right)  
\le \frac{2}{\min\{1,s \}}$ and 
$R_1=-R_2$, \eqref{eq:Rbound-s} follows.
\end{proof}

\begin{lemma}[Orthogonality; part of Lemma 4 in \cite{LiuLiuPego2007}]\label{lem:orth}
With ${\bf a}:=\nabla p_S$ and ${\bf b}:=\Delta{\bf u}_\parallel$,
\begin{equation}\label{eq:orth}
\langle{\bf a},{\bf a}-{\bf b}\rangle_{L^2(\Omega)}=0.
\end{equation}
\end{lemma}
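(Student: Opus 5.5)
Since $\langle{\bf a},{\bf a}-{\bf b}\rangle=\langle \nabla p_S,\nabla p_S\rangle-\langle \nabla p_S,\Delta{\bf u}_\parallel\rangle$, the claim reduces to the identity
\[
\|\nabla p_S\|^2=\langle \nabla p_S,\Delta{\bf u}_\parallel\rangle .
\]
We will obtain it by rewriting ${\bf a}-{\bf b}$ so that it is orthogonal to gradients, or at least to $\nabla p_S$. Recall that $\mathcal{P}$ is the $L^2$-orthogonal projection onto divergence-free fields with zero normal trace, and that its complement $I-\mathcal{P}$ is the projection onto $\{\nabla q: q\in H^1(\Omega)\}$. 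Thus $(I-\mathcal{P})\Delta{\bf u}=\nabla\phi$ for some $\phi$, and $\langle {\bf w},\nabla q\rangle=0$ for every ${\bf w}$ in the range of $\mathcal{P}$.

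\emph{Step 1.} Write $\nabla p_S=\Delta\mathcal{P}{\bf u}-\mathcal{P}\Delta{\bf u}$ and pair it with itself. The part $\langle \mathcal{P}\Delta{\bf u},\nabla p_S\rangle$ vanishes because $\nabla p_S$ is a gradient. Hence
\[
\|\nabla p_S\|^2=\langle \Delta\mathcal{P}{\bf u},\nabla p_S\rangle .
\]

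\emph{Step 2.} Handle $\mathcal{P}{\bf u}={\bf u}-\nabla\phi_u$. Since $\nabla\cdot{\bf u}_\text{div}$ is zero for the projected field, $\Delta\phi_u=\nabla\cdot{\bf u}$ with $\partial_n\phi_u=0$ (because ${\bf u}\cdot{\bf n}=0$). This gives
\[
\Delta\mathcal{P}{\bf u}=\Delta{\bf u}-\nabla(\nabla\cdot{\bf u}).
\]
Also $\Delta{\bf u}=\Delta{\bf u}_\perp+\Delta{\bf u}_\parallel$. Therefore it suffices to show
\[
\langle \Delta{\bf u}_\perp-\nabla(\nabla\cdot{\bf u}),\nabla p_S\rangle=0 .
\]

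\emph{Step 3.} This is the key step. In Liu--Liu--Pego the argument uses the fact that $\Delta p_S=0$ in $\Omega$. This holds because $\nabla\cdot\Delta\mathcal{P}{\bf u}=\Delta\nabla\cdot\mathcal{P}{\bf u}=0$ and $\nabla\cdot\mathcal{P}\Delta{\bf u}=0$. So for any field ${\bf w}$ we can integrate by parts:
\[
\langle {\bf w},\nabla p_S\rangle=\int_{\partial\Omega}({\bf w}\cdot{\bf n})\,p_S .
\]
Only the normal trace of ${\bf w}=\Delta{\bf u}_\perp-\nabla(\nabla\cdot{\bf u})$ matters.

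Near the boundary ($\xi\equiv1$), ${\bf u}_\perp=({\bf n}{\bf n}^T){\bf u}=({\bf n}\cdot{\bf u}){\bf n}$. The plan is to compute
\[
{\bf n}\cdot\bigl(\Delta({\bf n}({\bf n}\cdot{\bf u}))-\nabla\nabla\cdot{\bf u}\bigr)
\]
on $\partial\Omega$, using ${\bf u}=0$ there and the identity $\Delta=\nabla\nabla\cdot-\nabla\times\nabla\times$. Because ${\bf u}_\perp$ carries only normal content, the curl-curl term should have zero normal trace. Likewise, $\nabla\cdot{\bf u}_\perp-\nabla\cdot{\bf u}=-\nabla\cdot{\bf u}_\parallel$ should vanish to the needed order at $\partial\Omega$, since ${\bf u}_\parallel$ is tangential and zero on the boundary.

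This is the step I expect to be the main obstacle. The curvature terms from derivatives of ${\bf n}$ must be shown to drop out using ${\bf u}|_{\partial\Omega}=0$, and that may require writing $\nabla\times\nabla\times({\bf n}\psi)\cdot{\bf n}$ in terms of tangential derivatives of $\psi={\bf n}\cdot{\bf u}$, all of which vanish on the boundary. If the boundary-trace computation turns out to be delicate, I would instead cite Lemma 4 of \cite{LiuLiuPego2007} directly for the identity, as the statement indicates.
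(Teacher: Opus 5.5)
The paper gives no proof of this lemma; it is quoted from Lemma~4 of \cite{LiuLiuPego2007}, so your fallback of citing it is exactly what the paper does. Your Steps~1--2 and the reduction to $\langle \Delta\mathbf{u}_\perp-\nabla(\nabla\cdot\mathbf{u}),\nabla p_S\rangle=0$ are correct. Step~3, however, fails as designed, and the failure has nothing to do with curvature bookkeeping.

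\textbf{The integration-by-parts formula is wrong.} Harmonicity of $p_S$ does not give $\langle\mathbf{w},\nabla p_S\rangle=\int_{\partial\Omega}(\mathbf{w}\cdot\mathbf{n})p_S$ for an arbitrary field. The divergence theorem leaves the volume term $-\int_\Omega p_S\,\nabla\cdot\mathbf{w}$, which disappears only when $\nabla\cdot\mathbf{w}=0$. For your field $\mathbf{w}=\Delta\mathbf{u}_\perp-\nabla(\nabla\cdot\mathbf{u})$ one has $\nabla\cdot\mathbf{w}=-\Delta(\nabla\cdot\mathbf{u}_\parallel)\neq0$. Harmonicity is what lets you integrate by parts against a \emph{gradient} $\nabla g$, not against a general field.

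\textbf{The boundary integrand does not vanish either.} On $\partial\Omega$ we have $\mathbf{n}\cdot\mathbf{w}=-\mathbf{n}\cdot\nabla\times\nabla\times\mathbf{u}_\perp-\partial_n(\nabla\cdot\mathbf{u}_\parallel)$. The first term does vanish. But $\nabla\cdot\mathbf{u}_\parallel$ vanishes on $\partial\Omega$ only to first order: its normal derivative is the surface divergence of $\partial_n\mathbf{u}_\parallel$, which is generically nonzero. Consider the flat half-plane $\{x_2>0\}$ with $\mathbf{u}_\parallel=(u_1,0)$ and $h=\partial_2u_1|_{x_2=0}$.
\begin{itemize}
\item On the boundary, $\mathbf{n}\cdot\mathbf{w}=h'$.
\item $p_S|_{x_2=0}$ is, up to sign, the Hilbert transform of $h$.
\item Hence $\int_{\partial\Omega}(\mathbf{w}\cdot\mathbf{n})p_S$ is a nonzero multiple of $\|h\|_{\dot H^{1/2}(\mathbb{R})}^2$.
\end{itemize}
This is cancelled exactly by the volume term you dropped, $\int_\Omega p_S\,\Delta(\nabla\cdot\mathbf{u}_\parallel)=\int_{\partial\Omega}p_S\,\partial_n(\nabla\cdot\mathbf{u}_\parallel)$.

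\textbf{How to repair it.} Split the field into a solenoidal part and a gradient part before integrating by parts:
$\Delta\mathbf{u}_\perp-\nabla(\nabla\cdot\mathbf{u})=-\nabla\times\nabla\times\mathbf{u}_\perp-\nabla(\nabla\cdot\mathbf{u}_\parallel)$.

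\emph{Solenoidal piece.} $\langle\nabla q,\nabla\times\mathbf{v}\rangle=0$ for all $q\in H^1(\Omega)$ whenever $\mathbf{v}$ vanishes on $\partial\Omega$; only its tangential trace matters. Take $\mathbf{v}=\nabla\times\mathbf{u}_\perp$. Since $\mathbf{u}_\perp=0$ on $\partial\Omega$ and $\mathbf{u}_\perp=(\mathbf{n}\cdot\mathbf{u})\mathbf{n}$ near $\partial\Omega$, on $\partial\Omega$ we get $\nabla\mathbf{u}_\perp=\partial_n(\mathbf{n}\cdot\mathbf{u})\,\mathbf{n}\mathbf{n}^T$. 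This matrix is symmetric, so $\nabla\times\mathbf{u}_\perp=0$ there. The pairing therefore vanishes for every $q$; in fact $\nabla p_S(\mathbf{u}_\perp)=0$.

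\emph{Gradient piece.} We have $\nabla\cdot\mathbf{u}_\parallel\in H^1_0(\Omega)$. On $\partial\Omega$ its value is $\mathbf{n}\cdot\partial_n\mathbf{u}_\parallel$, because $\mathbf{u}_\parallel$ vanishes there. Since $\mathbf{n}\cdot\mathbf{u}_\parallel\equiv0$ in the layer, this value equals $-\mathbf{u}_\parallel\cdot\partial_n\mathbf{n}=0$. Weak harmonicity of $p_S$ means precisely $\langle\nabla g,\nabla p_S\rangle=0$ for all $g\in H^1_0(\Omega)$, and applying it to $g=\nabla\cdot\mathbf{u}_\parallel$ finishes the proof.

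This is the place where $\Delta p_S=0$ is genuinely used. Both pieces work at the $\mathbf{H}^2$ regularity of $\mathbf{u}$. Your route would additionally need a boundary trace of $\partial_n(\nabla\cdot\mathbf{u}_\parallel)$, which is not even defined for $\mathbf{u}\in\mathbf{H}^2$.
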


\begin{lemma}[Boundary-layer estimate; rewriting of\ Lemma 2 in \cite{LiuLiuPego2007}]\label{lem:layer}
There exists $C_0>0$ (depending only on $\Omega$) such that for ${\bf a}:=\nabla p_S$,
writing ${\bf a}_\perp=({\bf n}{\bf n}^T){\bf a}$ and
${\bf a}_\parallel=(I-{\bf n}{\bf n}^T){\bf a}$ on $\Omega_s$,
\begin{equation}\label{eq:layer}
\int_{\Omega_s}\bigl(|{\bf a}_\perp|^2-|{\bf a}_\parallel|^2\bigr)
\ge - C_0 s\int_\Omega |{\bf a}|^2 .
\end{equation}
\end{lemma}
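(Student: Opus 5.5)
This lemma is a quantitative form of Lemma 2 of \cite{LiuLiuPego2007}. The plan is to exploit that ${\bf a}=\nabla p_S$ is a gradient of a harmonic function. Since $\Delta\mathcal{P}{\bf u}-\mathcal{P}\Delta{\bf u}$ is a gradient and $\nabla\cdot$ of it vanishes (because $\nabla\cdot\mathcal{P}\Delta{\bf u}=0$ and $\nabla\cdot\Delta\mathcal{P}{\bf u}=\Delta\nabla\cdot\mathcal{P}{\bf u}=0$), we have $\Delta p_S=0$ in $\Omega$. The pointwise identity $|{\bf a}_\perp|^2-|{\bf a}_\parallel|^2=2|{\bf n}\cdot\nabla p|^2-|\nabla p|^2$ on $\Omega_s$ then reduces the claim to a Rellich-type comparison between normal and tangential derivatives of a harmonic function.

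\emph{Step 1: Rellich identity.} Take $p=p_S$ and the smooth vector field ${\bf X}=\xi\,{\bf n}$, with ${\bf n}=-\nabla\Phi$ and $\xi$ the cutoff from the Setup. We use the identity
\[
\nabla\cdot\Bigl(2({\bf X}\cdot\nabla p)\nabla p-|\nabla p|^2{\bf X}\Bigr)
=2\,\nabla p^T(\nabla{\bf X})\nabla p-(\nabla\cdot{\bf X})|\nabla p|^2,
\]
which uses $\Delta p=0$. Integrate this over $\Omega$. Since $\xi=1$ and ${\bf X}={\bf n}$ on $\partial\Omega$, the boundary term is $\int_{\partial\Omega}(2|\partial_n p|^2-|\nabla p|^2)$.

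\emph{Step 2: convert to a volume integral over the layer.} Instead of using the boundary trace directly, integrate the same identity weighted by the distance variable. Concretely, replace ${\bf X}$ by ${\bf X}_t=\xi_t{\bf n}$ with layer thickness $t\in(0,s)$ and average over $t$; equivalently, apply the coarea formula on the level sets $\{\Phi=t\}$. This expresses $\int_{\Omega_s}(2|{\bf n}\cdot{\bf a}|^2-|{\bf a}|^2)$ as a sum of two contributions: (i) a boundary term in which the vector field ${\bf n}$ flows across the level sets, and (ii) interior terms with integrand $\nabla p^T(\nabla{\bf n})\nabla p$ and $(\nabla\cdot{\bf n})|\nabla p|^2$.

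\emph{Step 3: estimate each contribution.} The terms in (ii) are bounded by $\|{\bf n}\|_{C^1}\int_{\Omega_s}|{\bf a}|^2$, which after integration over the thickness produces $C s\int_\Omega|{\bf a}|^2$. The terms involving $\nabla\xi$ live where $\Phi\in(s/2,s)$. There $|\nabla\xi|\le C/s$, but these terms come with the sign that favors the normal component, or can be absorbed after averaging in $t$. The cleanest route is the direct layer version of Lemma 2 in \cite{LiuLiuPego2007}: write
\[
\int_{\Omega_s}\bigl(|{\bf a}_\perp|^2-|{\bf a}_\parallel|^2\bigr)=\int_0^s\!\int_{\{\Phi=t\}}\bigl(2|\partial_n p|^2-|\nabla p|^2\bigr)\,d\sigma\,dt,
\]
and apply Step 1 on each subdomain $\{\Phi>t\}$. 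This gives
\[
\int_{\{\Phi=t\}}\bigl(2|\partial_n p|^2-|\nabla p|^2\bigr)\ge -C\int_{\Omega}|\nabla p|^2
\]
uniformly in $t$, because the level sets are uniformly $C^2$ for $t<s_0$. Integrating over $t\in(0,s)$ yields $-C_0 s\|{\bf a}\|^2$.

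\emph{Main obstacle.} The key step is the uniform-in-$t$ inner inequality. It requires a vector field on $\{\Phi>t\}$ that equals the unit normal on the level set $\{\Phi=t\}$ and has $C^1$ norm bounded independently of $t$, such as ${\bf n}$ cut off at a fixed depth $s_0/2$. This is where the $C^3$ regularity of $\partial\Omega$, giving ${\bf n}\in C^2(\Omega_{s_0})$, and the restriction $s<s_0/2$ are used. With that field in place, the Rellich estimate gives the constant $C_0$, which depends only on $\Omega$.
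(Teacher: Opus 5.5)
The paper does not prove this lemma; it only cites Lemma~2 of \cite{LiuLiuPego2007}. Your final argument is a correct and complete proof, and it is the standard one for this lemma. It uses the coarea formula over the level sets $\{\Phi=t\}$, $0<t<s$, together with the Rellich identity on $\{\Phi>t\}$ with the fixed field ${\bf X}=\zeta(\Phi){\bf n}$, where $\zeta\equiv 1$ on $[0,s_0/2]$ is independent of $s$.

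The argument goes through for the following reasons. For $t>0$ the set $\overline{\{\Phi>t\}}$ is compactly contained in $\Omega$, so the harmonic function $p_S$ is smooth up to $\{\Phi=t\}$. The divergence theorem then gives $\bigl|\int_{\{\Phi=t\}}(2|\partial_n p_S|^2-|\nabla p_S|^2)\,d\sigma\bigr|\le C\|{\bf X}\|_{C^1}\|{\bf a}\|^2$ uniformly in $t$, and integrating over $t\in(0,s)$ produces the factor $s$.

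By Fubini, this is the same as integrating the Rellich divergence identity against the weight $\min\{\Phi,s\}$, which vanishes on $\partial\Omega$. It actually yields the two-sided bound $\bigl|\int_{\Omega_s}(|{\bf a}_\perp|^2-|{\bf a}_\parallel|^2)\bigr|\le C_0 s\|{\bf a}\|^2$. Your restriction $s<s_0/2$ is harmless: for $s\in[s_0/2,s_0)$ the claim is trivial once $C_0\ge 2/s_0$.

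You should, however, delete Steps~1--2 and the first half of Step~3, which are wrong as written.
\begin{itemize}
\item With ${\bf X}=\xi{\bf n}$ and $\xi=\rho(\Phi/s)$, the $\nabla\xi$-terms equal $-\frac{\rho'(\Phi/s)}{s}\bigl(|{\bf a}_\perp|^2-|{\bf a}_\parallel|^2\bigr)$. This is a weighted copy of the very indefinite quantity you are trying to bound, not a term with a favorable sign.
\item Integrating over all of $\Omega$ produces $\int_{\partial\Omega}(2|\partial_n p_S|^2-|\nabla p_S|^2)$. This term has no sign, and it need not even be defined, because $\nabla p_S$ is only in $L^2(\Omega)$ and need not have an $L^2$ trace.
\item On $\{\Phi>t\}$ you must not reuse the scale-$s$ cutoff $\xi$ from the Setup. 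It differs from ${\bf n}$ on $\{\Phi=t\}$ for $t>s/2$, and its $C^1$ norm is $O(1/s)$, which would destroy the gain of $s$.
\end{itemize}
Your ``main obstacle'' paragraph already makes the correct choice of a fixed-depth cutoff.
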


\begin{proof}[Proof of Theorem \ref{refinedTheorem1LiuLiuPego}]
We start from the identity (cf.\ equation (59) in \cite{LiuLiuPego2007})
\begin{equation}\label{eq:59}
\|\Delta{\bf u}\|^2
=
\|\Delta{\bf u}_\perp\|^2
+2(\Delta{\bf u}_\perp,\Delta{\bf u}_\parallel)
+\|\Delta{\bf u}_\parallel-\nabla p_S\|^2
+\|\nabla p_S\|^2 .
\end{equation}
We estimate the two terms
$(\Delta{\bf u}_\perp,\Delta{\bf u}_\parallel)$ and
$\|\Delta{\bf u}_\parallel-\nabla p_S\|^2$.

\medskip
\noindent
\textbf{Claim 1 (cross term).}
For every $\varepsilon_1>0$,
\begin{equation}\label{eq:claim1}
(\Delta{\bf u}_\perp,\Delta{\bf u}_\parallel)
\ge
-\varepsilon_1\|\Delta{\bf u}\|^2
- C_1(\varepsilon_1)\|\nabla{\bf u}\|^2,
\qquad
C_1(\varepsilon_1):=\frac{C_R^2}{2}\left(1+\frac{1}{\varepsilon_1}\right).
\end{equation}

\noindent
\emph{Proof of Claim 1.}
Let
\[
A:=\xi{\bf n}{\bf n}^T\Delta{\bf u}+(1-\xi)\Delta{\bf u},
\qquad
B:=\xi(I-{\bf n}{\bf n}^T)\Delta{\bf u}.
\]
Then by \eqref{eq:Delta-u-par} and \eqref{eq:Delta-u-perp}, $\Delta{\bf u}_\perp=A+R_1$ and $\Delta{\bf u}_\parallel=B+R_2$.
Hence
\begin{equation}\label{eq:expand1}
(\Delta{\bf u}_\perp,\Delta{\bf u}_\parallel)=(A,B)+(A,R_2)+(R_1,B)+(R_1,R_2).
\end{equation}
Using ${\bf n}{\bf n}^T(I-{\bf n}{\bf n}^T)=0$ and $0\le\xi\le 1$,
\[
A\cdot B
=
\xi(1-\xi)\,\Delta{\bf u}\cdot (I-{\bf n}{\bf n}^T)\Delta{\bf u}
=
\xi(1-\xi)\bigl|(I-{\bf n}{\bf n}^T)\Delta{\bf u}\bigr|^2\ge 0,
\]
so $(A,B)\ge 0$. Also $\|A\|\le\|\Delta{\bf u}\|$ and $\|B\|\le\|\Delta{\bf u}\|$.
By Cauchy-Schwarz and Young,
\[
|(A,R_2)|
\le \|A\|\,\|R_2\|
\le \|\Delta{\bf u}\|\,\|R_2\|
\le \frac{\varepsilon_1}{2}\|\Delta{\bf u}\|^2+\frac{1}{2\varepsilon_1}\|R_2\|^2,
\]
and similarly
\[
|(R_1,B)|\le \frac{\varepsilon_1}{2}\|\Delta{\bf u}\|^2+\frac{1}{2\varepsilon_1}\|R_1\|^2.
\]
Using \eqref{eq:Rbound-s}, we obtain
\[
|(R_1,R_2)|\le \|R_1\|\,\|R_2\|
\le \frac{(\|R_1\|+\|R_2\|)^2}{2}
\le \frac{C_R^2}{2} \|\nabla{\bf u}\|^2,
\]
and
\[
\frac{1}{2\varepsilon_1}(\|R_1\|^2+\|R_2\|^2)
\le
\frac{1}{2\varepsilon_1}(\|R_1\|+\|R_2\|)^2
\le 
 \frac{C_R^2}{2\varepsilon_1}\|\nabla{\bf u}\|^2.
\]
Inserting these bounds into \eqref{eq:expand1} yields \eqref{eq:claim1}.
\hfill$\square$

\medskip
\noindent
\textbf{Claim 2 (parallel term).}
For every $\varepsilon_2>0$,
\begin{equation}\label{eq:claim2}
\|\Delta{\bf u}_\parallel-\nabla p_S\|^2
\ge
(1-\varepsilon_2-2C_0 s)\|\nabla p_S\|^2
- C_2(\varepsilon_2)\|\nabla{\bf u}\|^2,
\qquad
C_2(\varepsilon_2):=\frac{2C_R^2}{\varepsilon_2}.
\end{equation}

\noindent
\emph{Proof of Claim 2.}
Set ${\bf a}:=\nabla p_S$ and ${\bf b}:=\Delta{\bf u}_\parallel$.
On $\Omega_s$ decompose
\[
{\bf a}_\perp=({\bf n}{\bf n}^T){\bf a},\quad {\bf a}_\parallel=(I-{\bf n}{\bf n}^T){\bf a},\qquad
{\bf b}_\perp=({\bf n}{\bf n}^T){\bf b},\quad {\bf b}_\parallel=(I-{\bf n}{\bf n}^T){\bf b}.
\]
Since ${\bf u}_\parallel$ (hence ${\bf b}$) is supported in $\Omega_s$, we have
${\bf b}=0$ on $\Omega_s^c$. Therefore,
\begin{align}
\|{\bf a}-{\bf b}\|^2
=\int_{\Omega_s^c}|{\bf a}-{\bf b}|^2+\int_{\Omega_s}|{\bf a}-{\bf b}|^2 
=\int_{\Omega_s^c}|{\bf a}|^2+\int_{\Omega_s}|{\bf a}_\perp-{\bf b}_\perp|^2
+\int_{\Omega_s}|{\bf a}_\parallel-{\bf b}_\parallel|^2 .
\label{eq:split}
\end{align}

\emph{(i) Lower bound for the $\perp$-term.}
Expand and use Young:
\begin{align}
\int_{\Omega_s}|{\bf a}_\perp-{\bf b}_\perp|^2
&=\int_{\Omega_s}\bigl(|{\bf a}_\perp|^2-2{\bf a}_\perp\cdot{\bf b}_\perp+|{\bf b}_\perp|^2\bigr) \notag\\
&\ge \int_{\Omega_s}\bigl(|{\bf a}_\perp|^2-2{\bf a}_\perp\cdot{\bf b}_\perp\bigr) 
\ge (1-\varepsilon_2)\int_{\Omega_s}|{\bf a}_\perp|^2
-\frac{1}{\varepsilon_2}\int_{\Omega_s}|{\bf b}_\perp|^2 .
\label{eq:perp}
\end{align}

\emph{(ii) Lower bound for the $\parallel$-term using orthogonality.}
By Lemma \ref{lem:orth},
\[
0=\int_\Omega {\bf a}\cdot({\bf a}-{\bf b})
=\int_{\Omega_s^c}|{\bf a}|^2+\int_{\Omega_s}{\bf a}_\perp\cdot({\bf a}_\perp-{\bf b}_\perp)
+\int_{\Omega_s}{\bf a}_\parallel\cdot({\bf a}_\parallel-{\bf b}_\parallel).
\]
Hence
\begin{align}
-2\int_{\Omega_s}{\bf a}_\parallel\cdot({\bf a}_\parallel-{\bf b}_\parallel)
&=2\int_{\Omega_s^c}|{\bf a}|^2+2\int_{\Omega_s}{\bf a}_\perp\cdot({\bf a}_\perp-{\bf b}_\perp).
\label{eq:key-ortho}
\end{align}
Now use the identity
\[
|{\bf a}_\parallel-{\bf b}_\parallel|^2+|{\bf a}_\parallel|^2
=2|{\bf a}_\parallel|^2-2{\bf a}_\parallel\cdot{\bf b}_\parallel+|{\bf b}_\parallel|^2
\ge -2{\bf a}_\parallel\cdot({\bf a}_\parallel-{\bf b}_\parallel),
\]
and integrate over $\Omega_s$; combining with \eqref{eq:key-ortho} gives
\begin{align}
\int_{\Omega_s}\bigl(|{\bf a}_\parallel-{\bf b}_\parallel|^2+|{\bf a}_\parallel|^2\bigr)
&\ge 2\int_{\Omega_s^c}|{\bf a}|^2+2\int_{\Omega_s}{\bf a}_\perp\cdot({\bf a}_\perp-{\bf b}_\perp).
\label{eq:par-start}
\end{align}
Next, estimate the last term by Young:
\begin{align}
2{\bf a}_\perp\cdot({\bf a}_\perp-{\bf b}_\perp)
=2|{\bf a}_\perp|^2-2{\bf a}_\perp\cdot{\bf b}_\perp 
\ge (2-\varepsilon_2)|{\bf a}_\perp|^2-\frac{1}{\varepsilon_2}|{\bf b}_\perp|^2 .
\label{eq:young2}
\end{align}
Insert \eqref{eq:young2} into \eqref{eq:par-start}, 
 subtract $\int_{\Omega_s}|{\bf a}_\parallel|^2$ from both sides,  
 split $\int_{\Omega_s}|{\bf a}_\parallel|^2$ to $(1-\varepsilon_2) \int_{\Omega_s}|{\bf a}_\parallel|^2  + (2-\varepsilon_2)\int_{\Omega_s}-|{\bf a}_{\parallel}|^2$,
 and finally drop $2\int_{\Omega_s^c}|{\bf a}|^2$ 
  to obtain
\begin{align}
\int_{\Omega_s}|{\bf a}_\parallel-{\bf b}_\parallel|^2
&\ge
2\int_{\Omega_s^c}|{\bf a}|^2
+(2-\varepsilon_2)\int_{\Omega_s}|{\bf a}_\perp|^2
-\frac{1}{\varepsilon_2}\int_{\Omega_s}|{\bf b}_\perp|^2
-\int_{\Omega_s}|{\bf a}_\parallel|^2 \notag\\
&\ge 
(1-\varepsilon_2) \int_{\Omega_s}|{\bf a}_\parallel|^2 
+(2-\varepsilon_2)\int_{\Omega_s}|{\bf a}_\perp|^2
    -|{\bf a}_{\parallel}|^2
-\frac{1}{\varepsilon_2}\int_{\Omega_s}|{\bf b}_\perp|^2.
\label{eq:par}
\end{align}

\emph{(iii) Combine.}
Substitute \eqref{eq:perp} and \eqref{eq:par} into \eqref{eq:split}:
\begin{align}
\|{\bf a}-{\bf b}\|^2
&\ge
\int_{\Omega_s^c}|{\bf a}|^2
+(1-\varepsilon_2)\int_{\Omega_s}|{\bf a}_\perp|^2
-\frac{1}{\varepsilon_2}\int_{\Omega_s}|{\bf b}_\perp|^2 \notag\\
& 
\quad + (1-\varepsilon_2) \int_{\Omega_s}|{\bf a}_\parallel|^2 
+(2-\varepsilon_2)\int_{\Omega_s}|{\bf a}_\perp|^2
    -|{\bf a}_{\parallel}|^2
-\frac{1}{\varepsilon_2}\int_{\Omega_s}|{\bf b}_\perp|^2.\notag\\
&\ge 
(1-\varepsilon_2) \int_{\Omega}|{\bf a}|^2
+(2-\varepsilon_2)\int_{\Omega_s}|{\bf a}_\perp|^2
      -|{\bf a}_{\parallel}|^2
-\frac{2}{\varepsilon_2}\int_{\Omega_s}|{\bf b}_\perp|^2 \notag\\
&\ge 
(1-\varepsilon_2) \int_{\Omega}|{\bf a}|^2
-2C_0 s \int_{\Omega_s} |{\bf a}|^2
-\frac{2}{\varepsilon_2}\int_{\Omega}|{\bf b}_\perp|^2,
\label{eq:combine-raw}
\end{align}
where Lemma\,\ref{lem:layer} and $2-\varepsilon_2<2$ are used in the last step.
This gives the
clean estimate (their (71))
\[
\|{\bf a}-{\bf b}\|^2
\ge (1-\varepsilon_2-2C_0 s)\|{\bf a}\|^2
-\frac{2}{\varepsilon_2}\int_{\Omega_s}|{\bf b}_\perp|^2.
\]

Finally, we bound $\int_{\Omega_s}|{\bf b}_\perp|^2$ using the structure of ${\bf b}$.
From \eqref{eq:Delta-u-par}, ${\bf b}=\xi(I-{\bf n}{\bf n}^T)\Delta{\bf u}+R_2$, so
${\bf b}_\perp=({\bf n}{\bf n}^T){\bf b}=({\bf n}{\bf n}^T)R_2$ because
${\bf n}{\bf n}^T(I-{\bf n}{\bf n}^T)=0$. Hence,
\[
\int_{\Omega_s}|{\bf b}_\perp|^2 \le \|R_2\|^2 \le C_R^2\|\nabla{\bf u}\|^2.
\]
Therefore,
\[
\|\Delta{\bf u}_\parallel-\nabla p_S\|^2
\ge (1-\varepsilon_2-2C_0 s)\|\nabla p_S\|^2
-\frac{2C_R^2}{\varepsilon_2}\|\nabla{\bf u}\|^2,
\]
which is \eqref{eq:claim2}. \hfill$\square$

\medskip
\noindent
\textbf{Finish the theorem.}
Insert \eqref{eq:claim1} and \eqref{eq:claim2} into \eqref{eq:59}, and discard the nonnegative term
$\|\Delta{\bf u}_\perp\|^2$:
\[
\|\Delta{\bf u}\|^2
\ge
-2\varepsilon_1\|\Delta{\bf u}\|^2
+(1-\varepsilon_2-2C_0 s)\|\nabla p_S\|^2
+\|\nabla p_S\|^2
-\bigl(2C_1(\varepsilon_1)+C_2(\varepsilon_2)\bigr)\|\nabla{\bf u}\|^2.
\]
Thus
\begin{equation}\label{eq:62}
(1+2\varepsilon_1)\|\Delta{\bf u}\|^2
\ge
(1+\beta_1)\|\nabla p_S\|^2
-\bigl(2C_1(\varepsilon_1)+C_2(\varepsilon_2)\bigr)\|\nabla{\bf u}\|^2,
\qquad
\beta_1:=1-\varepsilon_2-2C_0 s.
\end{equation}
Divide by $1+\beta_1$:
\begin{align}
\|\nabla p_S\|^2
\le
\frac{1+2\varepsilon_1}{1+\beta_1}\|\Delta{\bf u}\|^2
+\frac{2C_1(\varepsilon_1)+C_2(\varepsilon_2)}{1+\beta_1}\|\nabla{\bf u}\|^2.
\label{Liu001}
\end{align}

We choose $\varepsilon_2>0$ and $s>0$ sufficiently small such that $0<\beta_1<1$.  Since $\varepsilon_1>0$,
$\frac{1+2\varepsilon_1}{1+\beta_1} > \frac{1+0}{1+1}=\frac{1}{2}$. But when $\beta_1\to 1$ and $\varepsilon_1\to 0+$, the ratio approaches $\frac{1}{2}$. Therefore, the ratio $\frac{1+2\varepsilon_1}{1+\beta_1}$ can be expressed as $\frac{1}{2} + \varepsilon$ for some $\varepsilon>0$. Then the final question is how the parameters $\varepsilon_1$, $\varepsilon_2$, and $s$ depend on $\varepsilon$ when our goal is 
\begin{align}
\frac{1+2\varepsilon_1}{1+\beta_1}\le \frac12+\varepsilon.
\label{Liu_goal}
\end{align}
First, observe that $\frac{1+2\varepsilon_1}{2} < \frac{1+2\varepsilon}{1+\beta_1}$. Combined with \eqref{Liu_goal}, this yields 
\begin{align}
\varepsilon_1<\varepsilon.
\label{eps_1}
\end{align}
Second, note that $\frac{1}{1+\beta_1}<\frac{1+2\varepsilon}{1+\beta_1}$. Combining this with \eqref{Liu_goal} yields $\beta_1>\frac{1}{1/2+\varepsilon} -1$. Substituting $\beta_1=1-\varepsilon_2 - 2C_0s$ gives $\frac{2\varepsilon}{1/2+\varepsilon}>\varepsilon_2 + 2C_0s$. Since $\frac{2\varepsilon}{1/2} > \frac{2\varepsilon}{1/2+\varepsilon}$, it follows that $4\varepsilon>\varepsilon_2 + 2C_0s$. Because all these values are positive,  
\begin{align}
\varepsilon_2 < 4\varepsilon
\text{ and }
s<\frac{2\varepsilon}{C_0}.
\label{eps_2}
\end{align} 
From \eqref{eps_1} and \eqref{eps_2} (note $C_0$ only relies on $\Omega$ in Lemma\,\ref{lem:layer}), it follows that $\varepsilon_1$, $\varepsilon_2$, and $s$ must all tend to $0$ as $\varepsilon \to 0^+$.
For example, one can choose 
\begin{align}
\tilde{\varepsilon}=\min\{\varepsilon, 1\}, 
\quad
\varepsilon_1= \frac{\tilde\varepsilon}{4}, 
\quad
\varepsilon_2=\frac{\tilde\varepsilon}{2},
\quad
s=\frac{\tilde\varepsilon}{4C_0},
\label{Liu_choice}
\end{align}
then
it is easy to show that  
\begin{align}
\frac{1+2\varepsilon_1}{1+\beta_1}\le \frac12+\tilde\varepsilon  \le \frac{1}{2}+ \varepsilon.
\label{Liu002}
\end{align}
In this case, 
\begin{align}
C_1(\varepsilon_1)&=\frac{C_R^2}{2} (1+\frac{1}{\varepsilon_1})=\frac{C_R^2}{2} ( 1+\frac{4}{\tilde\varepsilon}) 
<\frac{2.5 C_R^2}{\tilde\varepsilon}
=\frac{2.5 C_R^2}{\min\{1,\varepsilon\}},
\label{C1_eps_upperbd}
\\
C_2(\varepsilon_2)&=\frac{2C_R^2}{\varepsilon_2}=\frac{4C_R^2}{\tilde\varepsilon}
=\frac{4C_R^2}{\min\{1,\varepsilon \}}.
\label{C2_eps_upperbd}
\end{align}
Finally, we obtain \eqref{eq:thm} from \eqref{Liu001}, \eqref{C1_eps_upperbd}, \eqref{C2_eps_upperbd} 
with  
\[
C_S(\varepsilon) := 2\frac{2.5 C_R^2}{\min\{1,\varepsilon \}}
 +C_2(\varepsilon_2) = \frac{9C_R^2}{\min\{1,\varepsilon \}}.
\]
Because $C_R= \frac{2C}{\min\{1,s \}}$ from \eqref{eq:Rbound-s} and $s=\frac{\min\{\varepsilon,1\}}{4C_0}$ from \eqref{Liu_choice}, 
\begin{align}
C_S(\varepsilon) = \frac{36 C^2}{\min\{1,\varepsilon \}  \cdot \left(\min\{1, \frac{\min\{1,\varepsilon \}}{4C_0} \} \right)^2 }.
\label{C_eps_arbi_eps}
\end{align}
When $\varepsilon<\min\{1, 4C_0\}$, it reduces to 
\begin{align}
C_S(\varepsilon)=\frac{576 C^2 C_0^2}{\varepsilon^3}.
\label{C_eps_small_eps}
\end{align}
Recall that both $C$ and $C_0$ only depends on $\Omega$. 

\end{proof}

\bibliographystyle{plain}
\bibliography{ref}

\end{document}